\newtheorem{theorem}{Theorem}
\newtheorem{definition}[theorem]{Definition}
\newtheorem{example}[theorem]{Example}
\newtheorem{proposition}[theorem]{Proposition}
\newtheorem{remark}[theorem]{Remark}
\newtheorem{corollary}[theorem]{Corollary}
\newtheorem{lemma}[theorem]{Lemma}
\newtheorem{problem}[theorem]{Problem}
\begin{document}

\title{Topology on locally finite metric spaces}

\author{Valerio Capraro}
\address{University of Neuchatel, Switzerland}
\thanks{Supported by Swiss SNF Sinergia project CRSI22-130435}
\email{valerio.capraro@unine.ch}

\keywords{}

\subjclass[2000]{Primary 52A01; Secondary 46L36}

\date{}

\maketitle

\begin{abstract}
The necessity of a theory of General Topology and, most of all, of Algebraic Topology on locally finite metric spaces comes from many areas of research in both Applied and Pure Mathematics: Molecular Biology, Mathematical Chemistry, Computer Science, Topological Graph Theory and Metric Geometry. In this paper we propose the basic notions of such a theory and some applications: we replace the classical notions of continuous function, homeomorphism and homotopic equivalence with the notions of NPP-function, NPP-local-isomorphism and NPP-homotopy (NPP stands for Nearest Point Preserving); we also introduce the notion of NPP-isomorphism. We construct three invariants under NPP-isomorphisms and, in particular, we define the fundamental group of a locally finite metric space. As first applications, we propose the following: motivated by the longstanding question whether there is a purely metric condition which extends the notion of amenability of a group to any metric space, we propose the property SN (Small Neighborhood); motivated by some applicative problems in Computer Science, we prove the analog of the Jordan curve theorem in $\mathbb Z^2$; motivated by a question asked during a lecture at Lausanne, we extend to any locally finite metric space a recent inequality of P.N.Jolissaint and Valette regarding the $\ell_p$-distortion.
\end{abstract}

\section{Introduction}

\subsection{Motivations, applications and related literature}

The necessity of a theory of General Topology and, most of all, of Algebraic Topology on locally finite metric spaces comes from several areas of research:

\begin{itemize}
\item Molecular biologists, as Bon, Vernizzi, Orland and Zee, started in 2008 a classification of RNA structures looking at their \emph{genus} (see \cite{Bo-Ve-Ze08} and also \cite{Bo-Or11} and \cite{RHAPSN11} for developments).
\item Chemists, as Herges, started in 2006 to study molecules looking like a \emph{discretization} of the M\"{o}bius strip or of the Klein bottle (see \cite{He06} and also the book \cite{Ro-Ki02}).
\item Computer scientists founded in the late 80s a new field or research, called Digital Topology, motivated by the study of the topology of the screen of a computer, which is a locally finite spaces whose points are pixels (see, for instance, \cite{Ko}).
\end{itemize}

Although the motivations are mainly from Applied Mathematics, the theory has many applications also in Pure Mathematics.

\begin{enumerate}
\item The theory, introducing a notion of connectedness for locally finite metric spaces, gives the tools to \emph{bring up} many results from Graph Theory to locally finite metric spaces. In particular, motivated by a question asked during a lecture of Alain Valette, we prove a version for locally finite metric spaces of a recent inequality proved by P.N.Jolissaint and A.Valette regarding the $\ell^p$-distortion.
\item The theory, introducing a notion of continuity for locally finite metric spaces, gives the tools to \emph{bring down} many results from, say, Topology of Manifolds to locally finite metric spaces. In particular, motivated by a problem in Computer Science, we prove the analogue of the Jordan curve theorem in $\mathbb Z^2$.
\item The theory leads to the discovery of the property SN, which is, as far as we know, the first known \emph{purely metric} property that reduces to amenability for the Cayley graph of a finitely generated group.
\end{enumerate}

Here is a little survey about related ideas in literature.

\begin{itemize}
\item Digital Topology contains many good ideas that have led to the discovery of Algebraic Topology on the so-called digital spaces (see \cite{Kh87}, \cite{Ko89}, \cite{ADFQ00} and \cite{Ha10}). Unfortunately, this theory, despite concerns only subsets of $\mathbb Z^n$ (see \cite{ADFQ00}, Example 2.1), is quite technical, far from being intuitive and contains weird behaviors, as the fact that the digital fundamental group does not always verify the multiplicative property (see \cite{Ha10} and reference therein) and the Seifert-van Kampen theorem (see again \cite{ADFQ00}).
\item Topological Graph Theory is based on the following identification: every vertex is seen as a point; any edge joining two vertex is seen as a copy of $[0,1]$ joining the two extremal points. This theory has many applications that one can find in any of the books on the topic, but in our opinion it does not capture the real topological essence of a graph. We will see that there are technical and \emph{natural} reasons to expect that 3-cycle and the 4-cycle must have trivial fundamental group. The latter example also shows that Rips complexes are not enough to reach our purpose\footnote{Trying to define the fundamental group via Rips complexes leads to many difficulties: first of all, since it depends on the radius $\delta$, it would not be clear which is the \emph{right} fundamental group; moreover, a non-trivial definition would imply that the fundamental group of the unit square $Q$ in $\mathbb Z^2$ with the Euclidean metric is $\mathbb Z$, contradicting, first of all, the natural requirement of the validity of the Jordan curve theorem in $\mathbb Z^2$ (indeed $\mathbb Z^2\setminus Q$ would be \emph{path-connected}), but also the intuition that an observer living on $Q$ has no unit of measurement to find a hole.}.
\item Many authors have studied coarse cohomology on locally finite metric spaces (see \cite{Ho72}, \cite{Ro93} and, for a systematic approach, \cite{Ro03}). This is of course an interesting theory, with many applications in K-theory, where one is interested only on large-scale behaviors of a space, but is useless in our case where one is interested in local behaviors and, more specifically, in finite spaces (molecules are finite, the screen of a computer is finite ...), that get trivial in coarse geometry.
\end{itemize}

Hoping to have convinced the reader that something new is really necessary, the very basic idea that we are going to follow is a change of perspective about the notion of continuity. Let $X$ be a topological space, a continuous path in $X$ is classically seen as a continuous function $f:[0,1]\rightarrow X$. Hence, the notion of continuity is, in some sense, imposed from \emph{outside}, taking as a unit of reference the interval $[0,1]$. This choice works very well for manifolds, because every open set contains copies of $[0,1]$, but it does not work for locally finite metric spaces. The idea is to change perspective looking at the notion of continuity from \emph{inside}. Continuity is \emph{to do steps as short as possible}: a continuous path in a locally finite metric space will be constructed, roughly speaking, as follows: start from a point $x_0$, move to one of the nearest points $x_1$ and so on. From a philosophical point of view, this paper is a corollary of this simple change of perspective.

\subsection{Structure of the paper}

\begin{itemize}
\item In Sec. \ref{se:algebraictopology} we develop the basics of Algebraic Topology on locally finite metric spaces: we introduce the notion of fundamental group of a locally finite metric space, taking inspiration from the original definition by Poincar\`{e}, and we prove that the construction is independent on the base point and gives an NPP-invariant (see Theorem \ref{th:fundamentalgroup}). We give some examples of fundamental groups, showing that this fundamental group behaves like a \emph{discretization} of the classical fundamental group: for instance, the fundamental group of the grid $\mathbb Z^2$ is the same as the classical fundamental group of $\mathbb R^2$, the fundamental group of $\mathbb Z^2\setminus\{(0,0)\}$ is the same as the classical fundamental group of $\mathbb R^2\setminus\{(0,0)\}$. We state some results about the fundamental group that will be proved in the companion piece of this paper \cite{Ca-Go-Pi11}. We introduce the discrete analog of the notions of continuous function, homeomorphism and homotopic equivalence and we prove, in Theorem \ref{th:homotopicequivalence}, that they induce group homomorphisms/isomorphisms at the level of the fundamental groups.
\item In Sec. \ref{se:firstapplications} we propose two applications of the fundamental group: motivated by a problem in Computer Science, we  prove the analog in $\mathbb Z^2$ of the Jordan curve theorem (see Theorem \ref{th:jordanbrouwer}); motivated by a question asked during a lecture we extend to any finite metric space a recent result by P.N. Jolissaint and A. Valette, which gives a lower bound for the $\ell^p$-distortion of a finite connected graph (see Theorem \ref{th:jolivale} and Corollary \ref{cor:jolivale}).
\item Motivated by the theory developed in Sec. \ref{se:algebraictopology}, in Sec. \ref{se:definitions} we introduce a new notion of isomorphism between locally finite metric spaces, which will be called NPP-isomorphism. In Sec. \ref{se:examples} we collect some simple examples of NPP-isomorphisms, showing also that this NPP-way to embed a metric space into another is not equivalent to any of the other well-known ways. In particular, we introduce the notion of \emph{graph-type} metric space and we prove that any such a metric space is canonically NPP-isomorphic to a \emph{Traveling Salesman} graph (Theorem \ref{th:NPPisomorphism1}).
\item In Sec. \ref{se:isoperimetric} we introduce the second NPP-invariant for locally finite metric spaces (the fundamental group being the first one), which we call \emph{isoperimetric constant} (Definitions \ref{def:isoperimetricspace}, \ref{def:globaliso} and Theorem \ref{th:isoinvariant}), since the construction is similar (and reduces) to the isoperimetric constant of a locally finite, infinite, connected graph (that is not an NPP-invariant, by the way).
\item Motivated by the question \emph{What is the property that corresponds to have isoperimetric constant equal to zero}, in Sec. \ref{se:sn} we introduce a property for any metric space, the Small Neighborhood property (property SN), in Definition \ref{def:small}. At first we give some examples of spaces with or without property SN (Proposition \ref{prop:example}) and then we study the relation between the property SN and the amenability: we prove that the property SN is equivalent to the amenability for infinite, locally finite, connected graph (Theorem \ref{th:snvsamenability} and Proposition \ref{prop:caprarovsceccherini}). So, property SN is the first known purely metric property that reduces to amenability for (the Cayley graph of) a finitely generated group. Finally, we give a description of the property SN of a locally finite space in terms of the isoperimetric constant (Theorem \ref{th:isoperimetricvssn}), which is a version for locally finite metric spaces of well-known results by Ceccherini-Silberstein, Grigorchuk, de la Harpe and Elek, S\'{o}s. In particular, it turns out that the property SN is an NPP-invariant.
\item In Sec. \ref{se:zoomiso} we introduce the third invariant of the theory, which we call \emph{zoom isoperimetric constant}, since it gives more precise information than the isoperimetric constant, using a construction that, in some sense, increases the knowledge of the space, step by step (Definition \ref{def:zoom} and Theorem \ref{th:zoominvariant}). We prove that the two isoperimetric constants are, in some sense, independent, meaning that in general there is no way to deduce one from the other (Theorem \ref{th:invariantindependent}). Using this new invariant, we propose a stronger notion than amenability, the notion of \emph{locally amenable space}, a space which is amenable with respect to any \emph{local observation}. We prove in Theorem \ref{th:localglobalamenability} that in case of graph-type spaces, a single observation is enough to conclude if the space is completely amenable. This suggests (Sec. \ref{suse:expanders}) some way to extend the classical notion of expander of graphs and the recent notion of geometric property (T) to any locally finite metric space.
\end{itemize}

Before starting the technical discussion, let us fix some notation. Throughout the paper:
\begin{itemize}
\item Given a finite set $A$, we denote by $|A|$ the number of elements in $A$.
\item $(X,d)$ will be, a priori, any metric space. Given $x\in X$ and $r>0$, $B(x,r)$ will denote the open ball of radius $r$ about $x$.
\item Given a subset $A$ of $X$, we denote by $\overline A$ the closure of $A$.
\item Given $A\subseteq X$ and $\alpha>0$ we denote
\begin{itemize}
\item $cN_\alpha(A)=\{x\in X:d(x,A)\leq\alpha\}$
\item $cN_\alpha(A)^\circ=\{x\in X:d(x,A)<\alpha\}$
\end{itemize}
The difference $cN_\alpha(A)\setminus\overline{cN_\alpha(A)^\circ}$ is denoted by $cB_\alpha(A)$.
\end{itemize}

\subsection{Other possible applications}\label{suse:otherapplications}

This subsection can be skipped at first reading. It is little speculative and shows the picture that the author had in mind writing this paper. The (hopefully) interesting part is the formulation of a new class of problems that might be of interest both for Pure and Applied Mathematics.
\\\\
Let $(X,d)$ be a metric space and suppose that there is an observer that wants to make a map of $X$. The strong restriction, that often an observer faces in the real life, is that the distance between two points is not known exactly. The observer is only able to say, given two points $y,z$, which one is nearer to another point $x$. One can construct dozens of real examples where such a situation may happen; here are a couple of them:

\begin{itemize}
\item An astronomer wants to make a picture of the universe. He or she does not know the exact distance between two objects, but (sometimes) he can decide indirectly which one is nearer to a third object.
\item An applied computer scientist wants to upload some data related to a metric space. Maybe in this case he knows mathematically the function $d$, but it could be impossible to upload it exactly (maybe just because $d(x,y)$ is an irrational number and (s)he is allowed to use just rational numbers with a fixed number of digits). His/her idea might be to replace $d$ with another function which preserves the relation of nearness/farness between points.
\item A traveling salesman who does not know the exact distance between the cities that he has to visit, but knows, maybe by mere observation of a map, their relations of nearness/farness.
\end{itemize}

We will formalize later the intuitive idea that, abstractly, the procedure of making such a map is described by an NPP-embedding, namely an embedding of $(X,d)$ into another metric space $(X',d')$ which is \emph{Nearest Points Preserving}. The image of $(X,d)$ under this embedding is called \textbf{symbolic map}.
The theory has clearly two general questions:

\begin{itemize}
\item Is it possible (and, in case, how?) to make a \emph{good} symbolic map?
\item What kind of information are encoded in a symbolic map?
\end{itemize}

These questions generate a new class of mathematical questions:

\begin{itemize}
\item How to find good NPP-embeddings?
\item What are the invariants under NPP-isomorphisms?
\end{itemize}

The second question will be widely discussed in this paper, whose primary purpose is to find three different invariants under NPP-isomorphisms. The first question will not be discussed and so we list below some explicit questions related to it.

\begin{problem}\label{pr:embedding}
Given a metric space, find its best NPP-embedding (Banach space? lowest dimensional Hilbert space? constant curvature surfaces?
\end{problem}

As already happened a couple of centuries ago, also in this case Cartography helps to formulate interesting and difficult questions: the general problem in cartography is to represent some metric space into a map that we can easily construct and bring with us. It is clear that the best map would be a planar map, but we can be satisfied also of a surface of constant curvature. So an interesting problem for the application would be

\begin{problem}
Find hypothesis that guarantee the existence of an NPP-embedding into a surface of constant curvature and, in particular, into the Euclidean plane.
\end{problem}

A classical problem in Applied Computer Science is that of uploading some data in such a way to keep the relations of farness and nearness among points. Since in this case we can only use a finite number of digits, say $N$, the most important problem seems to be the following:

\begin{problem}
Find hypotheses that guarantee the existence of an NPP-embedding of a (necessarily finite) metric space into some $(\mathbb (\mathbb Q_N)^n,d)$, where $\mathbb Q_N$ is the set of rational numbers with $N$ digits and $d$ is a computable metric (for instance, the $\ell_1$-metric).
\end{problem}

Another interesting problem regards the possibility to find the best NPP-embedding

\begin{problem}
Find hypotheses that guarantee the existence of an NPP-embedding into some $\mathbb Z^n$, equipped with some \emph{computable metrics}, as for instance the $\ell^1$-metric.
\end{problem}

In this latter case, it is likely that a theory of homology of a locally finite metric space can help.

\section{Basics of Algebraic Topology on locally finite spaces}\label{se:algebraictopology}

This section is devoted to the basic concepts of General Topology and Algebraic Topology on a locally finite metric space: we introduce the notion of fundamental group, we present some basic properties and we introduce the discrete analog of the classical notions of continuous function, homeomorphism and homotopy.

\subsection{The fundamental group of a locally finite space}\label{se:fundamental}

In the spirit of the original definition of the fundamental group of a topological space, we construct a group whose role is to capture the holes of a locally finite metric space. The basic example to keep in mind is the following: $\mathbb Z^2$ has no holes, $\mathbb Z^2\setminus\{(0,0)\}$ has a hole.
\\\\
Try to attach a fundamental group to a locally finite metric space is not a new idea, at least when the metric comes from a locally finite, connected graph. In this case the standard definition, using spanning trees, is not that interesting. Very recently, Diestel and Spr\"{u}ssel have studied a more interesting notion of fundamental group for locally finite connected graphs with ends (see \cite{Di-Sp11}). On the other hand, many authors have studied some coarse cohomogy (see \cite{Ho72}, \cite{Ro93} and, for a systematic approach, Chapter 5 in \cite{Ro03}). As we told in the introduction, our motivating problems force to consider something new, something able to capture the details of the space. We recall that our main intuition is to change perspective looking at the notion of continuity from \emph{inside}. So, we replace the usual notion of a continuous path with the notion of a path made by the shortest admissible steps and the notion of homotopic equivalence with an equivalence that puts in relation paths which differ by the shortest admissible steps.
\\\\
Throughout this section, let $(X,d)$ be a locally finite metric space; i.e. a metric space whose bounded sets are finite.

\begin{definition}\label{def:1discrete}
Let $x\in X$. The set $dN_1(x)$, called \textbf{discrete 1-neighborhood of $x$}, is constructed as follows: let $r>0$ be the smallest radius such that $|B(x,r)|\geq2$, then $dN_1(x)=B(x,r)$.
\end{definition}

\begin{definition}
Let $(X,d)$ be a locally finite metric spaces. A \textbf{continuous path} in $X$ is a sequence of points $x_0x_1\ldots x_{n-1}x_n$ such that for all $i=1,2,\ldots,n$, one has
\begin{equation}\label{eq:continuouspath}
x_i\in dN_1(x_{i-1}) \text{ and } x_{i-1}\in dN_1(x_i)
\end{equation}
\end{definition}

Observe that \emph{to be joined by a continuous path} is an equivalence relation. The equivalence classes are called \textbf{path-connected components}.

\begin{remark}\label{ex:pathconnected}
A simple but important remark is that a continuous path $x_0x_1\ldots x_{n-1}x_n$ is characterized by the property: $d(x_i,x_{i-1})=d(x_{i+1},x_i)$, for all $i\in\{1,\ldots,n-1\}$. So, the following set of points inside $\mathbb R^2$ with the Euclidean distance is not path connected

$$
\xymatrix{
        \bullet_{(-1,1)}  && &&\bullet_{(1,1)}\\
        \bullet_{(-1,0)}  && \bullet_{(0,0)} &&\\
        && && \bullet_{(1,-1)}}
$$

and, in particular, it has three path connected components: $C_1=\{(-1,1),(-1,0),(0,0)\}$, $C_2=\{(1,1)\}$ and $C_3=\{(1,-1)\}$. An example of path-connected subset $X$ of $\mathbb R^2$ (with the Euclidean distance) is the following:

$$
\xymatrix{
        \bullet_{(-1,1)}  && \bullet_{(0,1)} && \bullet_{(1,1)}\\
        \bullet_{(-1,0)}  &&  && \bullet_{(1,0)}\\
        \bullet_{(-1,-1)} && \bullet_{(0,-1)} && \bullet_{(1,-1)}}
$$
This latter example shows also that a locally finite path-connected metric space might not look like a graph: there is no way to put on $X$ a graph distance which is coherent to the one inherited by the Euclidean metric.

\end{remark}

As for the classical fundamental group, let us restrict our attention to path-connected locally finite metric spaces\footnote{It is a trivial fact, but important to keep in mind, that a path-connected metric space can be very far from being a graph. Consider for instance $\mathbb Z^2\setminus\{(0,0)\}$, equipped with the Euclidean metric.} $X$ and let $x\in X$ be a fixed base point. A \textbf{continuous circuit} is simply given by a continuous path $x_0x_1\ldots x_{n-1}x_n$ such that $x_0=x_n=x$. As usual, we equip the set of circuits with the same base point with the operation of concatenation
$$
(x_0x_1\ldots x_{n-1}x_n)(y_0y_1\ldots y_{m-1}y_m)=x_0\ldots x_ny_0\ldots y_m
$$
with the computational rule

\begin{equation}\label{eq:rule}
x_ix_i=x_i \text{ for all } x_i\in X
\end{equation}

Now let us introduce a notion of homotopic equivalence. Consider two  continuous circuits $x_0x_1\ldots x_{n-1}x_n$ and $y_0y_1\ldots y_{m-1}y_m$. By adding some $x$'s or some $y$'s we can suppose that $n=m$ (thanks to the rule \ref{eq:rule}).

\begin{definition}
The two circuits $x_0x_1\ldots x_{n-1}x_n$ and $y_0y_1\ldots y_{n-1}y_n$ are said to be homotopic equivalent if for all $i=1,\ldots,n-1$, there is a finite sequence $z_i^1,\ldots,z_i^k$ ($k$ does not depend on $i$, \emph{a posteriori} thanks to the computational rule in \ref{eq:rule}) such that

\begin{enumerate}
\item $z_i^1=x_i$
\item $z_i^k=y_i$
\item $z_i^{h+1}\in dN_1(z_i^h)$ and $z_i^h\in dN_1(z_i^{h+1})$, for all $h=1,\ldots,k-1$
\item $x_0z_1^hz_2^h\ldots z_{n-1}^hx_n$ is a continuous circuit for all $h=1,\ldots k$
\end{enumerate}
\end{definition}

A comfortable way to look (and also to define) an homotopic equivalence is by building the \textbf{homotopy matrix}

$$
\left(
  \begin{array}{ccccc}
    x_0 & x_1 & \ldots & x_{n-1} & x_n \\
    x_0 & z_1^2 & \ldots & z_{n-1}^2 & x_n \\
    \ldots & \ldots & \ldots & \ldots & \ldots \\
    x_0 & z_1^{k-1} & \ldots & z_{n-1}^{k-1} & x_n \\
    x_0 & y_1 & \ldots & y_{n-1} & x_0 \\
  \end{array}
\right)
$$
with the condition that every row and every column form a continuous path in $X$.

\begin{example}\label{ex:homotopy}
Let $Q=\{(0,0),(1,0),(1,1),(0,1)\}$ as a subset of $\mathbb R^2$ with the Euclidean metric and consider the continuous circuit $\gamma$ with base point $(0,0)$ as in the figure:
$$
\xymatrix{
        \bullet_{(0,1)}\ar@{->}[d]  & \bullet_{(1,1)}\ar@{->}[l]\\
        \bullet_{(0,0)}\ar@{->}[r]  & \bullet_{(1,0)}\ar@{->}[u]}
$$
Now, the definition, says that it is possible replace the points $(0,1)$ and $(1,1)$ with one of closest points, for instance $(0,0)$ and $(1,0)$. Hence, $\gamma$ is homotopic equivalent to the following circuit

$$
\xymatrix{
        \bullet_{(0,1)} & \bullet_{(1,1)}\\
        \bullet_{(0,0)}\ar@/^/[r] & \bullet_{(1,0)}\ar@/^/[l]}
$$

Now, we can replace $(1,0)$ with one of the nearest point, for instance $(0,0)$. In this way, we obtain the constant path in $(0,0)$. So, the initial circuit is homotopic equivalent to the constant path in $(0,0)$. In terms of homotopic matrix we have

$$
\left(
  \begin{array}{ccccc}
    (0,0) & (1,0) & (1,1) & (0,1) & (0,0) \\
    (0,0) & (1,0) & (1,0) & (0,0) & (0,0) \\
    (0,0) & (0,0) & (0,0) & (0,0) & (0,0) \\
  \end{array}
\right)
$$
This is a \emph{rough proof} that $\pi_1(C_4)=\{0\}$, where $C_4$ is the 4-cycle. Notice that such a procedure cannot be adapted for $C_n$, with $n\geq5$.
\end{example}

It should be clear that everything goes well. Let us summarize all basics properties with the following

\begin{theorem}\label{th:fundamentalgroup}
\begin{enumerate}
\item The homotopic equivalence is an equivalence relation on the set of circuits with base point $x$. The quotient set is denoted by $\pi_1(X,x)$.
\item The operation of concatenation is well defined on $\pi_1(X,x)$, giving rise to the structure of group.
\item $\pi_1(X,x)$ does not depend on $x$ in the following sense: given another $y\in X$, there is a group isomorphism between $\pi_1(X,x)$ and $\pi_1(X,y)$.
\end{enumerate}
\end{theorem}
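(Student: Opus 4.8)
The plan is to treat the three assertions as the discrete analogues of the classical facts about the fundamental group, with the \emph{homotopy matrix} playing the role of a continuous homotopy $H:[0,1]^2\to X$ and the computational rule \ref{eq:rule} playing the role of reparametrization. Throughout I would use freely that the continuous-path condition \ref{eq:continuouspath} is symmetric in its two points, so that ``forms a continuous path'' is preserved when the order of a sequence is reversed, and that padding a circuit by repeated letters (via $x_ix_i=x_i$) changes neither the circuit it represents nor, as one checks, its homotopy class.

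For part (1), reflexivity is witnessed by the one-column matrix $z_i^1=x_i$. Symmetry follows by reading a homotopy matrix from bottom to top: condition (3) is symmetric, and after reversing the rows every column is still a continuous path since the path condition is symmetric. Transitivity is obtained by stacking the matrix witnessing $A\sim B$ on top of the matrix witnessing $B\sim C$ and identifying the common middle row $B$; the only point needing attention is first to bring $A$, $B$, $C$ to a common length by inserting repeated letters, which the rule \ref{eq:rule} permits, and then to pad the two matrices so that the shared representation of $B$ is literally the same sequence.

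For part (2), I would first check that concatenation descends to classes: given homotopy matrices for $A\sim A'$ and $B\sim B'$, juxtaposing them left-to-right (they agree along the base-point column $x$) yields a homotopy matrix for $AB\sim A'B'$, and every new row is a continuous path because each half is and the two halves meet at the base point $x$. Associativity and the identity law $A\cdot x=A=x\cdot A$ already hold at the level of sequences once \ref{eq:rule} is applied at the splice point, so they descend trivially. The substantive point is the inverse: writing $\bar A=x_nx_{n-1}\cdots x_0$, I would show that $A\bar A$ is null-homotopic by contracting the palindrome $x_0x_1\cdots x_{n-1}x_nx_{n-1}\cdots x_1x_0$ inward. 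The $h$-th row of the matrix is the circuit running out to $x_{n-h}$ and immediately back, with the two mirror-image ``peak'' entries lowered simultaneously from $x_{n-h}$ to $x_{n-h-1}$; each such column move is legal because consecutive vertices of $A$ lie in each other's $dN_1$, and each intermediate row is again a continuous path (near the peak it flattens to repeated letters, absorbed by \ref{eq:rule}). This staircase contraction is the discrete shadow of the classical ``fold-back'' homotopy exhibited in Example \ref{ex:homotopy}, and verifying that \emph{every} row and column of the matrix is continuous is, I expect, the main obstacle of the whole theorem: unlike the continuous case one cannot invoke uniform continuity and must produce the moves explicitly.

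Finally, for part (3) I would use that $X$ is path-connected to choose a continuous path $\sigma$ from $x$ to $y$ and define $\Phi:\pi_1(X,y)\to\pi_1(X,x)$ by $\Phi([\gamma])=[\sigma\gamma\bar\sigma]$. Well-definedness follows from part (2) by conjugating a homotopy matrix for $\gamma$ by the fixed path $\sigma$; the homomorphism property follows from $\bar\sigma\sigma\sim\mathrm{const}_y$ (the inverse law just proved), which cancels the inner $\bar\sigma\sigma$ in $\sigma\gamma_1\bar\sigma\,\sigma\gamma_2\bar\sigma$; and bijectivity follows because $[\delta]\mapsto[\bar\sigma\delta\sigma]$ is a two-sided inverse. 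Hence $\pi_1(X,x)\cong\pi_1(X,y)$, which completes the proof.
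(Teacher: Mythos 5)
Your proposal is correct and follows essentially the same route as the paper: the paper declares (1) and (2) evident and proves (3) by exactly the conjugation $[\gamma]\mapsto[\text{path}\cdot\gamma\cdot\text{reversed path}]$ you use (modulo the immaterial choice of direction $\pi_1(X,y)\to\pi_1(X,x)$ versus $\pi_1(X,x)\to\pi_1(X,y)$). Your matrix-stacking, juxtaposition, and staircase-contraction arguments are precisely the details the paper leaves unwritten, and they check out, including the point you rightly flag as the crux — verifying continuity of every row and column of the contraction of $A\bar A$, which works because the path condition \ref{eq:continuouspath} is symmetric and $x\in dN_1(x)$ makes padded plateaus legal.
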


\begin{proof}
(1) and (2) are evident. (3) is the \emph{discretization} of the standard proof: let $c_0c_1\ldots c_{n-1}c_n$ be a continuous path connecting $x$ with $y$, then the mapping $\Phi:\pi_1(X,x)\rightarrow\pi_1(X,y)$ defined by
$$
\Phi([x_0x_1\ldots x_{n-1}\_n])=[c_nc_{n-1}\ldots c_1c_0x_0x_1\ldots x_{n-1}x_nc_0c_1\ldots c_{n-1}c_n]
$$
is the desired isomorphism.
\end{proof}

Let us give some basic examples, just to show how the fundamental group behaves:

\begin{example}\label{ex:fundamentalgroups}
\begin{itemize}
\item The fundamental group of $\mathbb Z^2$ is trivial and the fundamental group of $\mathbb Z^2\setminus\{(0,0)\}$ is equal to $\mathbb Z$ (both with respect to the graph metric and the Euclidean one). These two examples also show that this way to do algebraic topology on locally finite spaces is not coarsely invariant. This is natural since the details of the space are very important for our motivating problems.
\item Let $C_n$ be the $n$-cycle graph, then
$$
\pi_1(C_n)=\left\{
             \begin{array}{ll}
               \{e\}, & \hbox{if $n\leq4$;} \\
               \mathbb Z, & \hbox{if $n\geq5$.}
             \end{array}
           \right.
$$
The proof is basically the generalization of the argument in Example \ref{ex:homotopy}. In some sense, three of four points are not enough to \emph{find a hole} and one needs more. This is clarified in much more generality in the following Theorem \ref{th:discretization}.
\end{itemize}
\end{example}

We conclude this introductory section stating some results that will be proved in the companion piece of this paper \cite{Ca-Go-Pi11}.

\begin{theorem}\label{th:discretization}
Let $X$ be a compact metrizable path-connected manifold and let $\mathcal T$ be a fine enough triangulation of $X$. Equip the 1-skeleton of $\mathcal T$ with the natural structure of finite connected graph. Then
$$
\pi_1(\mathcal T)=\pi_1(X)
$$
\end{theorem}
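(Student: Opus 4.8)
The plan is to prove Theorem \ref{th:discretization} by exhibiting an explicit isomorphism between the discrete fundamental group $\pi_1(\mathcal T)$ of the $1$-skeleton (viewed as a finite connected graph with its graph metric) and the classical topological fundamental group $\pi_1(X)$. The two main ingredients are: first, a comparison between $\pi_1(\mathcal T)$ and the classical fundamental group of the topological $1$-skeleton $|\mathcal T^{(1)}|$ (the geometric realization of the graph, with each edge a copy of $[0,1]$); and second, the standard cellular-approximation fact that for a CW-structure coming from a triangulation, the inclusion of the $2$-skeleton induces an isomorphism on $\pi_1$, while the inclusion of the $1$-skeleton induces a surjection whose kernel is normally generated by the boundaries of the $2$-cells. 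I would first reduce to showing that the discrete group $\pi_1(\mathcal T)$ is exactly the classical $\pi_1(|\mathcal T^{(1)}|)$ quotiented by the relations coming from the triangles of $\mathcal T$, since that quotient is precisely $\pi_1(X)$ by classical CW-theory.

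The heart of the matter is therefore the first ingredient, and here the key observation (flagged already in Example \ref{ex:fundamentalgroups} and Example \ref{ex:homotopy}) is how the discrete homotopy relation behaves on cycles. In a graph with the graph metric, $dN_1(v)$ is exactly the set of neighbors of $v$ together with $v$ itself, so a discrete continuous path is an honest edge-path in the graph. The crucial structural feature, established by the $C_n$ computation, is that every $3$-cycle and every $4$-cycle is discretely null-homotopic, whereas no cycle of length $\geq 5$ can be contracted by the elementary moves. I would make this precise as a lemma: two edge-loops in the graph $\mathcal T^{(1)}$ are discretely homotopic if and only if they differ by a finite sequence of the elementary spanning moves encoded in the homotopy matrix, and these elementary moves generate exactly the relation ``insert or delete a backtrack, and contract the boundary of any triangle or any $4$-cycle of $\mathcal T$.'' Because $\mathcal T$ is a triangulation, its $2$-cells are triangles, so the triangle-contraction relations are present; I then need that the extra $4$-cycle contractions introduce no relations beyond those already forced by the triangles. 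This is where the hypothesis that $\mathcal T$ is a \emph{fine enough} triangulation must be used: in a flag-like or sufficiently subdivided triangulation, every $4$-cycle bounds a disk made of triangles inside $\mathcal T$, so contracting it is a consequence of contracting those triangles and hence adds nothing new.

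Given that lemma, the argument assembles cleanly. The discrete null-homotopies supply a surjection from classical loops in $|\mathcal T^{(1)}|$ onto $\pi_1(\mathcal T)$ whose kernel is the normal subgroup generated by triangle boundaries (the $4$-cycle relations being redundant by fineness). By van Kampen / cellular approximation applied to the CW-complex $|\mathcal T|$, this is exactly the presentation of $\pi_1(|\mathcal T|) = \pi_1(X)$, since $|\mathcal T|$ is homeomorphic to $X$ and the $2$-skeleton inclusion is a $\pi_1$-isomorphism. Composing gives the desired isomorphism $\pi_1(\mathcal T) \cong \pi_1(X)$. One should also check the isomorphism is natural in the base point, but Theorem \ref{th:fundamentalgroup}(3) already handles base-point independence on the discrete side and the classical theory on the other.

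The step I expect to be the genuine obstacle is pinning down the precise meaning of \emph{fine enough} and proving that under that hypothesis the $4$-cycle (and more generally any short-cycle) contractions are subsumed by the triangle relations. There are two hidden dangers here. The first is that the graph metric on $\mathcal T^{(1)}$ could create ``shortcut'' cycles of length $3$ or $4$ whose vertices are close in $X$ but which do not bound a triangulated disk inside $\mathcal T$; ruling these out is exactly what a sufficiently fine (e.g.\ barycentrically subdivided, or small-diameter) triangulation buys, but it requires a quantitative argument that the $1$-neighborhoods $dN_1$ see only genuine edges and not metric accidents. The second danger is that the elementary homotopy moves are defined via the columns of the homotopy matrix being continuous paths, which is a slightly rigid condition; I would need to verify that this rigidity does not make the discrete relation \emph{strictly finer} than the triangle-contraction relation, i.e.\ that every triangle really is contractible by a legal homotopy matrix. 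Both points are where the careful choice of triangulation, rather than any deep topology, carries the proof, and I suspect the companion paper \cite{Ca-Go-Pi11} devotes most of its effort precisely here.
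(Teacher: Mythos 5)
You should first know that the paper does not actually prove Theorem \ref{th:discretization}: it appears among the results ``that will be proved in the companion piece of this paper \cite{Ca-Go-Pi11}'' (listed as in preparation), so there is no in-paper proof to compare yours against. Judged on its own terms, your plan follows the natural route and is sound in outline: identify the paper's homotopy-matrix relation on edge loops of the $1$-skeleton with the elementary moves ``insert/delete a backtrack, contract a $3$-cycle, contract a $4$-cycle'' (this is precisely the situation in discrete $A$-homotopy theory of graphs, where the analogous generation lemma is known, and it is consistent with the paper's computation that $\pi_1(C_n)$ is trivial exactly for $n\leq 4$), then compare the resulting presentation with the classical edge-path group $E(\mathcal T)\cong\pi_1(|\mathcal T|)=\pi_1(X)$, whose relations are backtracks and boundaries of $2$-simplices.

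Two refinements to your plan. First, your requirement that every $4$-cycle bound a triangulated disk inside $\mathcal T$ is stronger than what the group-theoretic assembly needs: since triviality in the edge-path group is precisely being a product of conjugates of triangle relations, it suffices that every $3$- and $4$-cycle of the $1$-skeleton be null-homotopic in $X$, and this is what ``fine enough'' delivers cheaply --- a compact manifold is uniformly locally simply connected, so once the mesh is below the corresponding radius every short cycle lies in a small ball and dies in $\pi_1(X)$. This also disposes of your worry about ``empty'' $3$-cycles (cycles of the graph not bounding a $2$-simplex of $\mathcal T$): they need not bound in $\mathcal T$, only in $X$. Second, the genuinely unproved step in your outline is the generation lemma itself --- that the homotopy-matrix relation is generated by exactly those elementary moves, together with the converse check that each $2$-simplex boundary contracts by a legal matrix (the computational rule $x_ix_i=x_i$ makes the converse easy, exactly as in Example \ref{ex:homotopy}, since one may pad rows with repeated entries to align the columns). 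You flag this correctly as the crux, and presumably it is where the companion paper's effort lies; but as written your proposal asserts it rather than proves it, so your text is a viable strategy rather than a complete argument.
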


The first application of this result is to make us optimistic about the theory: it is not \emph{orthogonal} to the classical one, but we obtain it as a \emph{limit theory}. Another application is that it allows to construct (even explicitly) finite graphs whose fundamental group has torsion, which is a completely new phenomenon in Topological Graph Theory. Finally, this result gets very close to the construction of an algorithm for \emph{computing} the fundamental group of a manifold, knowing a triangulation, which is an open problem for $n\geq4$ (see \cite{KJZLG08}). It is clear that such an algorithm will not work in general, because the word problem on finitely presented groups is not solvable by a celebrated result by Novikov (see \cite{No55}), but this is of course a interesting topic for further research.

\begin{theorem}
Let $X,Y$ be two locally finite path-connected metric space in normal form\footnote{Let $X$ be a locally finite path-connected metric space and $x\in X$. Let $R_x$ be the smallest $R>0$ such that $|B(x,R)|\geq2$. One can easily prove that $R_x$ does not depend on $x$ and so it is a constant of the space, called \textbf{step of $X$}. A locally finite path-connected space is said to be in \textbf{normal form} if the distance is normalized in such a way that the step is equal to $1$. Observe that one can pass from a general form to the normal form via a simple re-normalization, which does not affect the fundamental group (see Theorem \ref{th:homotopicequivalence}). It follows that the hypothesis in the Theorem is \emph{without loss of generality}.}. Then $X\times Y$ is path-connected with the $\ell^1$-metric. One has
$$
\pi_1(X\times Y)=\pi_1(X)\times\pi_1(Y)
$$
\end{theorem}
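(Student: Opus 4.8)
The plan is to carry over the classical proof that the fundamental group of a product is the product of the fundamental groups, using the coordinate projections and the coordinate slice inclusions as two mutually inverse maps; the only genuinely discrete ingredient is a \emph{commuting square} relation supplied by the triviality of $\pi_1(C_4)$ established in Example \ref{ex:homotopy}. First I would describe the combinatorial structure of $X\times Y$. Since $X$ and $Y$ are in normal form, by Remark \ref{ex:pathconnected} a continuous path in either factor is exactly a sequence of points at consecutive distance $1$. For a point $(x,y)$ the $\ell^1$-distance to any other point is $d_X(x,x')+d_Y(y,y')$, so the nearest-neighbour distance in $X\times Y$ equals $1$ and is realised precisely by the points obtained from $(x,y)$ by replacing one coordinate by a nearest neighbour while fixing the other. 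Thus $X\times Y$ is again in normal form, and a continuous path in it is a concatenation of \emph{$X$-moves} (change the $X$-coordinate to a neighbour, keep $Y$ fixed) and \emph{$Y$-moves} (symmetrically). Path-connectedness is then immediate: join $(a,b)$ to $(a',b)$ by lifting a continuous path of $X$ at constant $Y$-level $b$, and $(a',b)$ to $(a',b')$ by lifting a continuous path of $Y$ at constant $X$-level $a'$.

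Next I would set up the two maps. The coordinate projections $p_X,p_Y$ send continuous paths to continuous paths, since an $X$-move projects to a genuine step in $X$ while a $Y$-move projects to a repeated point, which is absorbed by the computational rule \eqref{eq:rule}; they respect homotopy matrices for the same reason, so they induce group homomorphisms and together a homomorphism $\Psi=((p_X)_\ast,(p_Y)_\ast)\colon\pi_1(X\times Y,(x_0,y_0))\to\pi_1(X,x_0)\times\pi_1(Y,y_0)$. In the other direction the slice inclusions $\iota_{y_0}\colon a\mapsto(a,y_0)$ and $\iota_{x_0}\colon b\mapsto(x_0,b)$ are isometric embeddings, hence carry continuous paths and homotopies of a factor into $X\times Y$, and I define $\Phi([\alpha],[\beta])=[\iota_{y_0}(\alpha)\cdot\iota_{x_0}(\beta)]$. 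That $\Psi\circ\Phi=\mathrm{id}$ is a one-line check, since $p_X(\iota_{y_0}(\alpha)\cdot\iota_{x_0}(\beta))=\alpha$ up to repeated points and symmetrically for $p_Y$.

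The heart of the argument is the reverse identity $\Phi\circ\Psi=\mathrm{id}$, i.e.\ that every circuit $\gamma$ based at $(x_0,y_0)$ is homotopic to $\iota_{y_0}(p_X\gamma)\cdot\iota_{x_0}(p_Y\gamma)$. The key lemma is that four points $(a,b),(a',b),(a',b'),(a,b')$, with $a'$ a neighbour of $a$ and $b'$ a neighbour of $b$, form a subspace isometric to the unit square $Q=C_4$ of Example \ref{ex:homotopy}; hence the square circuit around them is null-homotopic inside $X\times Y$, via the very matrix written there. Consequently, whenever a $Y$-move is immediately followed by an $X$-move (or vice versa), the two can be interchanged up to homotopy: the two two-step detours across the square differ by the null-homotopic square circuit, so conjugating by the initial segment and using that $\pi_1$ is a group (Theorem \ref{th:fundamentalgroup}) leaves the class of $\gamma$ unchanged. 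Bubbling all $X$-moves to the front then rewrites $\gamma$, up to homotopy, as $\gamma_X\cdot\gamma_Y$, where $\gamma_X$ performs all $X$-moves at $Y$-level $y_0$ and $\gamma_Y$ all $Y$-moves at $X$-level $x_0$; the levels are forced, since each leftward swap lowers an $X$-move's $Y$-level toward the starting value $y_0$, and the $X$-projection of the circuit $\gamma$ returns to $x_0$. But then $\gamma_X=\iota_{y_0}(p_X\gamma)$ and $\gamma_Y=\iota_{x_0}(p_Y\gamma)$ exactly, which is the claim. Hence $\Psi$ is a bijection, and being a homomorphism it is the desired isomorphism.

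The main obstacle I anticipate is bookkeeping rather than conceptual: making the interchange-of-moves argument precise at the level of homotopy matrices, since swapping a pair of moves changes the intermediate vertices and, through the rule \eqref{eq:rule}, the lengths of the rows and columns that must be padded so that the whole matrix stays rectangular with every row and column a continuous path. The clean way around this is to phrase each elementary swap as the insertion of one null-homotopic square circuit, as above, so that the group structure of $\pi_1(X\times Y)$ absorbs all the combinatorics and one never has to manipulate a large matrix directly.
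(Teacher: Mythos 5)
There is nothing in the paper to compare your argument against: the theorem is only \emph{stated} here, with the proof deferred to the companion paper \cite{Ca-Go-Pi11}, so your proposal must be judged on its own merits. On those merits it is sound, and it is the natural discretization of the classical product argument. The three pillars all check out. First, since $X$ and $Y$ are in normal form, every nonzero distance in each factor is $\geq 1$ with nearest neighbours at distance exactly $1$, so in the $\ell^1$-product a point at minimal distance from $(x,y)$ is obtained by moving exactly one coordinate by a distance-$1$ step; hence your decomposition of continuous paths into $X$-moves and $Y$-moves, and the resulting path-connectedness, are correct. Second, the four points $(a,b),(a',b),(a',b'),(a,b')$ do support the null-homotopy of Example \ref{ex:homotopy}: one small imprecision is that this subspace is isometric to the $4$-cycle $C_4$ with its graph metric (diagonal $2$), not to the Euclidean unit square $Q$ of the example (diagonal $\sqrt2$), but this is harmless because the homotopy matrix uses only the adjacency structure, which is identical, and all its entries lie among the four corners, so the matrix is valid in the ambient space $X\times Y$. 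Third, your instinct to phrase each elementary swap group-theoretically rather than inside one large homotopy matrix is not just cosmetic but necessary: the two intermediate vertices $(a,b')$ and $(a',b)$ of a swapped pair are at $\ell^1$-distance $2$, so a swap cannot be realized as a single row-to-row step of a homotopy matrix without padding, whereas writing $[\gamma][\gamma']^{-1}$ as a conjugate of the null-homotopic square circuit and invoking the group structure of Theorem \ref{th:fundamentalgroup} settles it cleanly. The bubbling terminates after finitely many inversions, the relative order of the $X$-moves (and of the $Y$-moves) is preserved so the front block is exactly $\iota_{y_0}(p_X\gamma)$ up to repeats absorbed by the rule \eqref{eq:rule}, and the $Y$-block sits at level $x_0$ because the $X$-projection of a circuit returns to $x_0$; together with the one-line identity $\Psi\circ\Phi=\mathrm{id}$ this makes $\Psi$ a bijective homomorphism, as claimed. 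In short: a complete and correct argument for a statement the paper itself leaves unproved, with only the \emph{isometric to $Q$} phrase in need of correction to \emph{isometric to $C_4$ with the shortest-path metric}.
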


\begin{theorem}
Let $X$ be a locally finite path-connected metric space and $\{U_\lambda\}$ be a continuous covering\footnote{The notion of continuous covering is the discrete counterpart of an open covering, closed under finite intersections and made of path-connected subsets, which is the standard hypothesis of Seifert-van Kampen's theorem.} of $X$. Then $\pi_1(X)$ is uniquely determined by the $\pi_1(U_\lambda)$'s in Seifert-van Kampen's sense.
\end{theorem}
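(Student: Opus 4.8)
The plan is to mimic the classical proof of the Seifert--van Kampen theorem, replacing the Lebesgue-number subdivisions of $[0,1]$ and $[0,1]^2$ with the intrinsic subdivision already carried by continuous paths and by homotopy matrices. Fix a base point $x$ lying in every member of the covering, and hence in every finite intersection, since the family is closed under finite intersections; by part (3) of Theorem \ref{th:fundamentalgroup} this choice is harmless. Form the colimit group $P=\operatorname*{colim}_\lambda \pi_1(U_\lambda,x)$ of the diagram whose objects are the groups $\pi_1(U_\lambda,x)$ and whose arrows are induced by the inclusions $U_\lambda\cap U_\mu\hookrightarrow U_\lambda$, these intersections being again members of the covering. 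The inclusions $U_\lambda\hookrightarrow X$ induce compatible homomorphisms $\pi_1(U_\lambda,x)\to\pi_1(X,x)$, which by the universal property of the colimit assemble into a single homomorphism $\Phi\colon P\to\pi_1(X,x)$. The assertion that $\pi_1(X)$ is determined in Seifert--van Kampen's sense is exactly that $\Phi$ is an isomorphism, so everything reduces to surjectivity and injectivity of $\Phi$.

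For surjectivity I would show that every continuous circuit is homotopic to a concatenation of circuits each contained in a single $U_\lambda$. Let $\gamma=x_0x_1\ldots x_n$ be a circuit based at $x$. The crucial input is a discrete Lebesgue property built into the notion of continuous covering: each elementary step $x_{i-1}x_i$ is contained in some member $U_{\lambda_i}$ of the family (this is where the fineness of the covering relative to the discrete neighbourhoods $dN_1$ enters). Since $x_i\in U_{\lambda_i}\cap U_{\lambda_{i+1}}$, which is again a path-connected member of the covering containing $x$, I can pick a continuous path $p_i$ from $x$ to $x_i$ inside $U_{\lambda_i}\cap U_{\lambda_{i+1}}$ (write $p_i^{-1}$ for its reverse, and let $p_0=p_n$ be constant at $x$). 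Because inverses exist in $\pi_1$ by part (2) of Theorem \ref{th:fundamentalgroup}, inserting the loops $p_i^{-1}p_i$ does not change the homotopy class, and $\gamma$ becomes homotopic to $\prod_{i=1}^n\bigl(p_{i-1}\,(x_{i-1}x_i)\,p_i^{-1}\bigr)$. Each factor runs $x\to x_{i-1}\to x_i\to x$ entirely inside $U_{\lambda_i}$, so $[\gamma]$ lies in the image of $\Phi$.

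For injectivity I would take an element of $P$ represented as a product $[\gamma_1]\cdots[\gamma_m]$, with each $\gamma_j$ a circuit inside some $U_{\lambda_j}$, and assume its image $[\gamma_1\cdots\gamma_m]$ is trivial in $\pi_1(X,x)$. Triviality provides a homotopy matrix $H$ in $X$ from $\gamma_1\cdots\gamma_m$ to the constant circuit at $x$. I would then run the two-dimensional version of the previous decomposition: by the discrete Lebesgue property every elementary cell of $H$, namely the four entries $z_i^h,z_{i+1}^h,z_i^{h+1},z_{i+1}^{h+1}$ that already form continuous paths along the corresponding row and column, lies in a single member of the covering. Attaching base-point-connecting paths at every node of the grid, chosen inside the appropriate intersection exactly as above, turns each elementary cell into a relation holding inside a single vertex group $\pi_1(U_\lambda,x)$, while the edges shared by neighbouring cells are expressed inside the corresponding intersection group $\pi_1(U_\lambda\cap U_\mu,x)$. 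Composing these local relations across the matrix shows that the original product already equals the identity in $P$, so $\ker\Phi$ is trivial and $\Phi$ is an isomorphism.

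The main obstacle is this injectivity (relations) step: one must check that the bookkeeping of base-point-connecting paths can be carried out coherently over the whole homotopy matrix, so that each local relation lands in the correct vertex group and every shared edge is expressed in the correct intersection group, which is the same subtlety that makes the classical argument delicate. In the discrete setting this is actually somewhat cleaner, since a homotopy matrix arrives already pre-subdivided into elementary cells; the only genuinely new ingredient is the discrete Lebesgue property guaranteeing that each step and each cell is small enough to fit inside one member of the covering. Establishing this fineness property (or extracting it from the precise definition of continuous covering) and then assembling the local relations into the equality $[\gamma_1]\cdots[\gamma_m]=1$ in $P$ is where the real work lies.
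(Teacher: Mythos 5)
You should first be aware that this paper contains no proof of the statement at all: it is one of the results explicitly deferred to the companion paper \cite{Ca-Go-Pi11}, and the notion of \emph{continuous covering} is never defined here beyond the one-line footnote (``the discrete counterpart of an open covering, closed under finite intersections and made of path-connected subsets''). So your proposal cannot be checked against an in-paper argument; it can only be judged as a discretization of the classical scheme, which is indeed the natural plan. Measured that way, it has a genuine gap that comes \emph{before} the ``discrete Lebesgue property'' you honestly flag: the inclusion-induced homomorphisms on which your entire colimit setup rests need not exist. The discrete $1$-neighborhood of Definition \ref{def:1discrete} is ambient-dependent: for $y\in U_\lambda$, the radius realizing $dN_1(y)$ computed inside $U_\lambda$ is in general strictly larger than the one computed in $X$, so a continuous path in $U_\lambda$ need not be a continuous path in $X$. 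Concretely, take $X=\mathbb Z$ with the Euclidean metric and $U=2\mathbb Z$: the pair $0,2$ is an elementary step in $U$ but $2\notin dN_1(0)$ in $X$, so the inclusion $U\hookrightarrow X$ is not an NPP-function and, by Theorem \ref{th:homotopicequivalence}, there is no induced map $\pi_1(U,x)\to\pi_1(X,x)$. The same problem afflicts $U_\lambda\cap U_\mu\hookrightarrow U_\lambda$ (so even the diagram whose colimit you form may fail to exist), and your surjectivity step reuses it silently: the connecting paths $p_i$ chosen inside $U_{\lambda_i}\cap U_{\lambda_{i+1}}$ are continuous only for the intersection's intrinsic neighborhoods, yet you concatenate them with steps of $\gamma$ in $X$. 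Any correct definition of continuous covering must therefore include a compatibility axiom of the form $dN_1^{U_\lambda}(y)\subseteq dN_1^{X}(y)$ for all $y\in U_\lambda$ (and similarly for intersections); nothing in the present paper supplies it, and your proof does not note that it is needed.

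The second gap is that your ``discrete Lebesgue property'' is not merely work left to do; in this setting it is not provable at all and must likewise be an axiom. In the classical proof, compactness of $[0,1]$ and $[0,1]^2$ lets one \emph{refine} a given path or homotopy until each piece sits in one member of the cover. Here refinement is impossible: the steps $x_{i-1}x_i$ of a continuous path and the elementary cells $z_i^h,z_{i+1}^h,z_i^{h+1},z_{i+1}^{h+1}$ of a homotopy matrix are atomic, and the only permitted modification, inserting repeated rows or columns via the rule $x_ix_i=x_i$, never shrinks a cell. So for injectivity you would need the hypothesis that \emph{every} homotopy matrix witnessing a nullhomotopy in $X$ has all its cells inside single members of the covering --- a condition on the covering itself, not a lemma --- and there is no analogue of passing to a finer subdivision if the given matrix fails it. In short: your outline is the right skeleton (basepoint in all members, colimit over the intersection-closed diagram, one-dimensional decomposition for surjectivity, cell-by-cell rewriting of homotopy matrices for injectivity), but as written it assumes two unstated axioms about continuous coverings (NPP-compatibility of inclusions, and atomic-scale fineness for both circuits and homotopy matrices), and the first of these invalidates even the construction of the map $\Phi$ unless it is imposed. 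Since the paper defers both the definition and the proof to \cite{Ca-Go-Pi11}, these axioms cannot be ``extracted'' as you hope; they must be formulated, and the theorem is only as strong as the definition that encodes them.
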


\subsection{Continuous functions, homeomorphisms and homotopic equivalences between locally finite metric spaces}\label{suse::discretecontinuity}

In this section we want to introduce the discrete analog of the classical notions of continuous function, homeomorphism and homotopic equivalence.
\\\\
In order to find the discrete analog of the classical notion of continuous function, it suffices to \emph{discretize} the standard notion of continuous function from a metric space to another. We get

\begin{definition}\label{def:nppfunction}
A function $f:X\rightarrow Y$ between two locally finite metric spaces is called \textbf{NPP-function} if for all $x\in X$ one has
$$
f(dN_1(x))\subseteq dN_1(f(x))
$$
\end{definition}

Observe that NPP-functions map continuous circuits to continuous circuits (exactly as continuous functions do in Topology of Manifolds). Hence, the image of a locally finite path-connected metric space is still so. In particular, if $X$ is a locally finite path-connected metric space and $f:X\rightarrow\mathbb Z$ is an NPP-function. Suppose that there are $x_0,x_1\in X$ such that $f(x_0)f(x_1)<0$, then there is $x\in X$ such that $f(x)=0$. This is a first very simple example showing how the discrete analog of classical results for continuous functions hold for NPP-function. By the way, notice that such a result is not true for continuous function in classical sense (just think of $f:\mathbb Z\rightarrow\mathbb Z$ defined by $f(x)=x^2-2$). Another simple remark about NPP-function is that composition of NPP-function is still an NPP-function. This simple property will be often used in the sequel without further comment.

\begin{definition}\label{def:homeo}
An NPP-function $f$ between two locally finite metric spaces $X$ and $Y$ is called \textbf{NPP-local-isomorphism} if it is bijective and the inverse $f^{-1}$ is still an NPP-function.
\end{definition}

The adjective \emph{local} will be clarified in Sec. \ref{se:definitions}. At the moment, we can just say that it comes from the fact that such functions look just at the smallest non-trivial neighborhood of a point $x$. In order to study some properties we will need some more global notion. Let us observe explicitly that the requirement that $f^{-1}$ is NPP is necessary. Indeed, consider $X=\{0,1,2\}$ with the Euclidean metric and let $\Delta$ be the regular triangle in $\mathbb R^2$. Any bijection $f:X\rightarrow\Delta$ is an NPP-function, but none of them is an NPP-local-isomorphism.

\begin{definition}\label{def:homotopyequivalence}
Let $f,g:X\rightarrow Y$ be two NPP-functions. They are said to be \textbf{homotopic equivalent} if there is a sequence of NPP-functions $f_n:X\rightarrow Y$ such that
\begin{enumerate}
\item $f_1(x)=f(x)$, for all $x\in X$
\item for all $x\in X$, there is a positive integer $k(x)$ such that $f_1(x)f_2(x)\ldots f_{k(x)}(x)$ is a continuous path joining $f(x)$ and $g(x)$.
\end{enumerate}
If $f$ is homotopic to $g$ we write $f\simeq g$.
\end{definition}

\begin{remark}
Notice that every point $f(x)$ is transported continuously in $g(x)$ in finite time, but the sequence $f_n$ might be infinite. As an explicit example, we now show that the identity on $\mathbb Z$ is homotopic equivalent to the constant path at the origin (hence $\mathbb Z$ is contractible). Define $f_n:\mathbb Z\rightarrow\mathbb Z$ to be
$$
f_n(x)=\left\{
         \begin{array}{ll}
           0, & \hbox{if $|x|\leq n$} \\
           x+n, & \hbox{if $x<-n$}\\
           x-n, & \hbox{if $x>n$}
         \end{array}
       \right.
$$
It is clear that each $f_n$ is an NPP-function and that the sequence $f_0(n)f_1(n)\ldots f_n(n)$ is a continuous path connecting $0$ with $n$, as claimed.
\end{remark}

\begin{definition}\label{def:homotopytype}
Two locally finite metric spaces are called \textbf{homotopic equivalent} if there are NPP-functions $f:X\rightarrow Y$ and $g:Y\rightarrow X$ such that $f\circ g\simeq Id_Y$ and $g\circ f\simeq Id_X$.
\end{definition}

Now we want to prove the discrete analog of well-known properties of Algebraic Topology; namely, that continuous functions induce group homomorphisms at the level of the fundamentals group and that homeomorphisms and homotopic equivalences induce isomorphisms at the level of the fundamental groups. The last result is a first example of \emph{how to discretize an existing proof}. In particular, we present the discretization of the proof in \cite{Ma}, Chapter II, Section 8.
\\\\
Let us fix some notation: let $X,Y$ be two locally finite path-connected spaces and $f,g:X\rightarrow Y$ two homotopic equivalent NPP-functions. Let $\{f_n\}$ be the sequence of NPP-functions describing the homotopic equivalence between $f$ and $g$. Fix a base point $x_0\in X$ and let $\gamma=f_1(x_0)f_2(x_0)\ldots f_{n-1}(x_0)f_n(x_0)$ be the continuous path connecting $f(x_0)$ and $g(x_0)$ along the homotopic equivalence. Define the isomorphism (as in Theorem \ref{th:fundamentalgroup}) $u:\pi_1(Y,f(x_0))\rightarrow\pi_1(Y,g(x_0))$ to be $u(\beta)=\gamma^{-1}\beta\gamma$.

\begin{lemma}\label{lem:commutativediagram}
The following diagram is commutative

$$
\xymatrix{\pi_1(X,x_0)\ar[r]^{f_*}\ar[dr]_{g_*} & \pi_1(Y,f(x_0))\ar[d]^u\\
& \pi_1(Y,g(x_0))}
$$

\end{lemma}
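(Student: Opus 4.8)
The plan is to fix a based circuit $\alpha = x_0 x_1 \ldots x_{m-1} x_m$ with $x_m = x_0$ and to show directly that $u(f_*[\alpha]) = g_*[\alpha]$ in $\pi_1(Y, g(x_0))$. Since $f_*[\alpha]$ and $g_*[\alpha]$ are represented by $f(x_0)f(x_1)\ldots f(x_m)$ and $g(x_0)g(x_1)\ldots g(x_m)$ respectively, and $u(\beta) = \gamma^{-1}\beta\gamma$ is the base-point-change isomorphism of Theorem \ref{th:fundamentalgroup}, this amounts to proving that the circuit $\gamma^{-1}(f(x_0)\ldots f(x_m))\gamma$ is homotopic equivalent to $g(x_0)\ldots g(x_m)$. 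Everything will be extracted from a single rectangular array obtained by applying the homotopy sequence $\{f_n\}$ to $\alpha$.

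Concretely, I would write $N$ for an index large enough that $f_N(x_i) = g(x_i)$ for every $i = 0, \ldots, m$ (such $N$ exists because $\alpha$ visits only finitely many points and, by Definition \ref{def:homotopyequivalence}, each is carried to its $g$-image in finitely many steps and then stays there), and I would consider the array $M = (f_j(x_i))_{1\le j\le N,\, 0\le i\le m}$. Reading $M$ by rows, the $j$-th row $f_j(x_0)\ldots f_j(x_m)$ is the image of the circuit $\alpha$ under the NPP-function $f_j$, hence a continuous circuit; reading $M$ by columns, the $i$-th column $f_1(x_i)\ldots f_N(x_i)$ is exactly the continuous path carrying $f(x_i)$ to $g(x_i)$ supplied by the homotopy (padded, if necessary, by repetitions of $g(x_i)$, which is legitimate by the computational rule \ref{eq:rule}). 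The top row is $f\circ\alpha$, the bottom row is $g\circ\alpha$, and --- because $x_0 = x_m$ --- the first and the last columns both coincide with $\gamma$.

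The key step is a discrete \emph{square identity}: a rectangular array all of whose rows and columns are continuous paths, with top row a circuit $a$ based at $p$, bottom row a circuit $b$ based at $q$, and first and last columns equal to one and the same path $\sigma$ from $p$ to $q$, forces $\sigma^{-1}a\sigma$ to be homotopic equivalent to $b$ as circuits based at $q$. Applying this to $M$ with $a = f\circ\alpha$, $b = g\circ\alpha$, $\sigma = \gamma$, $p = f(x_0)$, $q = g(x_0)$ yields precisely that $\gamma^{-1}(f\circ\alpha)\gamma$ is homotopic to $g\circ\alpha$, i.e. $u(f_*[\alpha]) = g_*[\alpha]$, which is the commutativity of the triangle. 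I would prove the square identity by induction on the number of rows: conjugating the $j$-th row by the tail $f_j(x_0)f_{j+1}(x_0)\ldots f_N(x_0)$ of $\gamma$ produces a based circuit $R_j$ at $q$ with $R_1 = \gamma^{-1}(f\circ\alpha)\gamma$ and $R_N = g\circ\alpha$, and the single-row move $R_j \to R_{j+1}$ is an elementary homotopy whose intermediate column entries are read off directly from rows $j$ and $j+1$ of $M$, their admissibility being guaranteed by the continuity of the columns of $M$.

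The hard part will be this last conversion from the free rectangular array $M$ --- in which the base point slides along $\gamma$ from $f(x_0)$ to $g(x_0)$ --- into a genuine based homotopy matrix (in the sense of the definition preceding Theorem \ref{th:fundamentalgroup}) with constant first and last columns equal to $g(x_0)$. All the substance is the careful bookkeeping needed to \emph{absorb} the sliding base point by conjugating with the successive tails of $\gamma$; once the rows are realigned to a common base point, checking that consecutive realigned rows differ by an admissible homotopy step is routine from the NPP-property of the $f_j$ together with the computational rule \ref{eq:rule}. Apart from this, the only point requiring a word of care is the uniform choice of $N$ and the resulting need to pad the shorter columns, which is exactly where the finiteness of $\alpha$ and rule \ref{eq:rule} enter.
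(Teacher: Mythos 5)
Your proposal is correct and takes essentially the same route as the paper: the paper's proof likewise builds the homotopy matrix $(f_j(x_i))$ from the sequence $\{f_n\}$ applied to a representative circuit of $\alpha$ (padding with constant paths via the computational rule \ref{eq:rule}), whose rows are the continuous circuits $f_j\circ\alpha$, whose columns are the transport paths, and whose first and last columns both equal $\gamma$. The only difference is the final bookkeeping --- the paper reads around the boundary of the matrix to get the circuit $f_*(\alpha)\gamma(g_*(\alpha))^{-1}\gamma^{-1}$ and null-homotopes it by eliminating columns from the right, whereas you realign the rows to the common base point $g(x_0)$ by conjugating with successive tails of $\gamma$ and induct on rows; these are equivalent ways of extracting the same based homotopy, and the sliding-base-point and uniform-$N$ issues you flag are dispatched in the paper with the same brevity (``adding, if necessary, some constant paths'').
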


\begin{proof}
Let $\alpha\in\pi_1(X,x_0)$ and let us prove that $g_*(\alpha)=\gamma^{-1}f_*(\alpha)\gamma$. Let $\alpha$ be represented by the continuous circuit $x_0x_1\ldots x_{n-1}x_n$. Adding,if necessary, some constant paths, we can build the following homotopic matrix
$$
\left(
  \begin{array}{ccccc}
    f(x_0) & f(x_1) & \ldots & f(x_{n-1}) & f(x_n) \\
    f_2(x_0) & f_2(x_1) & \ldots & f_2(x_{n-1}) & f_2(x_n) \\
    \ldots & \ldots & \ldots & \ldots & \ldots \\
    f_{n-1}(x_0) & f_{n-1}(x_1) & \ldots & f_{n-1}(x_{n-1}) & f_{n-1}(x_n) \\
    g(x_0) & g(x_1) & \ldots & g(x_{n-1}) & g(x_n) \\
  \end{array}
\right)
$$
where the terminology \emph{homotopic matrix} means that every row and every column is a continuous path. Observe that if we read around the boundary of this matrix we have exactly $f_*(\alpha)\gamma (g_*(\alpha))^{-1}\gamma^1$. The fact that the matrix is an homotopic matrix implies that this circuits is homotopic equivalent to the constant path (just eliminate each column starting from the right side and, finally, collapse the first column to the constant path). Hence $f_*(\alpha)\gamma (g_*(\alpha))^{-1}\gamma^1=1$, as required.
\end{proof}

\begin{theorem}\label{th:homotopicequivalence}
Let $f:X\rightarrow Y$ be an NPP-function between two locally finite path-connected metric spaces. Consider the induced function $f_*:\pi_1(X,x)\rightarrow\pi_1(Y,f(x))$ defined by
$$
f_*[x_0x_1\ldots x_{n-1}x_n]=[f(x_0)f(x_1)\ldots f(x_{n-1})f(x_n)]
$$
\begin{enumerate}
\item $f_*$ is always well-defined and it is always a group homomorphism.
\item If $f$ is an NPP-local-isomorphism, then $f_*$ is a group isomorphism.
\item If $f$ is an homotopic equivalence, then $f_*$ is a group isomorphism.
\end{enumerate}
\end{theorem}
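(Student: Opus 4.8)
The plan is to treat the three parts in order, resting on the single fact that an NPP-function carries continuous paths to continuous paths---already noted after Definition \ref{def:nppfunction}---together with functoriality of $f\mapsto f_*$. For (1), I would first check that applying $f$ coordinate-wise to a homotopy matrix again yields a homotopy matrix: if $x_i\in dN_1(x_{i-1})$ then $f(x_i)\in f(dN_1(x_{i-1}))\subseteq dN_1(f(x_{i-1}))$, and symmetrically $f(x_{i-1})\in dN_1(f(x_i))$, so every row and every column of the image matrix is a continuous path (repetitions being absorbed by the rule \ref{eq:rule}). Thus homotopic circuits go to homotopic circuits and $f_*$ is well defined on $\pi_1(X,x)$; it is a homomorphism because applying $f$ pointwise to a concatenation gives the concatenation of the images.

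Before (2) and (3) I would record the two formal identities $(g\circ f)_*=g_*\circ f_*$ and $(Id_X)_*=Id_{\pi_1(X,x)}$, both read directly off the defining formula. Statement (2) is then immediate: an NPP-local-isomorphism $f$ has $f^{-1}$ again an NPP-function, so by (1) it induces $(f^{-1})_*$, and functoriality gives $(f^{-1})_*\circ f_*=(Id_X)_*=Id$ and $f_*\circ(f^{-1})_*=Id$; hence $f_*$ is an isomorphism with inverse $(f^{-1})_*$.

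For (3) I would invoke Lemma \ref{lem:commutativediagram}: homotopic NPP-functions induce maps that differ by a change-of-base-point isomorphism $u$, so $\varphi\simeq\psi$ forces $\varphi_*$ to be an isomorphism if and only if $\psi_*$ is. Choosing base points $x_0$, $f(x_0)$, $g(f(x_0))$, consider the chain
$$
\pi_1(X,x_0)\xrightarrow{\,a=f_*\,}\pi_1(Y,f(x_0))\xrightarrow{\,b=g_*\,}\pi_1(X,g(f(x_0)))\xrightarrow{\,c=f_*\,}\pi_1(Y,f(g(f(x_0)))).
$$
From $g\circ f\simeq Id_X$ and functoriality, $b\circ a=(g\circ f)_*$ differs from $(Id_X)_*=Id$ by an isomorphism, hence is an isomorphism; applying $f\circ g\simeq Id_Y$ at the base point $f(x_0)$ shows likewise that $c\circ b=(f\circ g)_*$ is an isomorphism. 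Now $b\circ a$ iso gives $a$ injective and $b$ surjective, while $c\circ b$ iso gives $b$ injective and $c$ surjective; therefore $b$ is an isomorphism and $a=f_*=b^{-1}\circ(b\circ a)$ is an isomorphism, as required.

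The main obstacle is concentrated in (3): one cannot simply declare $g_*$ the inverse of $f_*$, because the several copies of $f_*$ and $g_*$ join different base points and the composites $g\circ f$, $f\circ g$ are only homotopic to---not equal to---the identities. Lemma \ref{lem:commutativediagram} is exactly what permits replacing these composites by honest isomorphisms, and the three-term chain is the device that distills an isomorphism at the original base point $x_0$ out of the two separate homotopies.
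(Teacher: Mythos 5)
Your proposal is correct and follows essentially the same route as the paper: part (1) by observing that $f$ sends homotopy matrices to homotopy matrices, part (2) by inverting via the NPP-function $f^{-1}$, and part (3) by invoking Lemma \ref{lem:commutativediagram} for both composites $g\circ f\simeq Id_X$ and $f\circ g\simeq Id_Y$ and then running the standard mono/epi bookkeeping (the paper, like you, discretizes Massey's argument). Your write-up is somewhat more explicit than the paper's---notably in spelling out functoriality and the three-term chain $a,b,c$---but the underlying argument is identical.
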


\begin{proof}
\begin{enumerate}
\item $f_*$ is well-defined, since it maps circuits to circuits (by definition) and preserves the homotopy between circuits (by definition, again, $f$ maps homotopy matrices to homotopy matrices). At this point it is clear that $f_*$ is a group homomorphism.
\item It is clear, by the previous item and the fact that $f^{-1}$ is an NPP-function.
\item Since $g\circ f\simeq Id_X$, by Lemma \ref{lem:commutativediagram} we obtain the following commutative diagram

$$
\xymatrix{\pi_1(X,x)\ar[r]^{f_*}\ar[dr]_{u} & \pi_1(Y,f(x))\ar[d]^{g_*}\\
& \pi_1(X,(g(f(x))}
$$

Since $u$ is an isomorphism, it follows that $f_*$ is a monomorphism and $g_*$ is an epimorphism. Applying the same argument to $f\circ g$ we obtain that $g_*$ is a monomorphism and then it is an isomorphism. Now, since $g_*\circ f_*=u$ and both $g_*$ and $u$ are isomorphism, it follows that $f_*$ is an isomorphism, as required.
\end{enumerate}
\end{proof}

\section{Two applications}\label{se:firstapplications}

The theory that we are developing is, in some sense, between Graph Theory and Topology of Manifolds and this is why one can find many applications \emph{bringing up} results from Graph Theory to locally finite metric spaces or \emph{bringing down} results from Topology of Manifolds. What we mean, more specifically, is the following: suppose that one needs a version for locally finite metric space of a theorem for graphs. The idea is to decompose the metric space into its connected components. They are not graphs, but many arguments can be applied with little variations (basically because there is a notion of connectedness) and so one can re-prove variations of the claimed theorem for each of them and, then, put everything together back. This is the spirit behind our version for general metric spaces of the recent P.N.Jolissaint-Valette's inequality. On the other hand, since every path-connected component is a \emph{discretization} of the classical notion of path-connected component in General Topology, many arguments of Topology of Manifolds can be applied in each path-connected component (basically because there is a notion of continuity), giving versions of results for manifolds in locally finite metric spaces.
\\\\
Said this, one can have fun giving many applications of the theory. We have chosen to present two particular applications for specific reasons that will be explained at the beginning of their own subsection.

\subsection{The Jordan curve theorem in $\mathbb Z^2$}

The classical Jordan curve theorem states that a simple closed curve $\gamma$ in $\mathbb R^2$ separates $\mathbb R^2$ in two path-connected components, one bounded and one unbounded and $\gamma$ is the boundary of each of these components. In this section we want to prove the analogue result in $\mathbb Z^2$ with the Euclidean distance. The reason behind the choice of this application is that the Jordan curve theorem in $\mathbb Z^2$ is of interest in Digital Topology (see, for instance, \cite{Bo08} and reference therein), but it seems that the version that we are going to present is not known. Indeed, it is based on a particular definition of \emph{simplicity} of a curve that seems to be new. In fact, one is tempted to define a simple circuit in $\mathbb Z^2$ as a continuous circuit $x_0x_1\ldots x_{n-1}x_n$ such that the $x_i$'s are pairwise distinct for $i\in\{1,\ldots,n-1\}$. With this definition the Jordan curve theorem is false: consider the following continuous circuit:
$$
(0,0)(0,-1)(1,-1)(2,-1)(2,0)(2,1)(1,1)(1,2)(0,2)(-1,2)(-1,1)(-1,0)(0,0)
$$
it is \emph{simple} in the previous sense; it has fundamental group equal to $\mathbb Z$, but it does not separate the grid $\mathbb Z^2$ in two path-connected components. In some sense, this circuit behaves like the $8$-shape curve in $\mathbb R^2$, which is not simple. The discrete analog of simplicity is indeed something different. Let us fix some notation

\begin{itemize}
\item Let $(x_0,y_0)\in\mathbb Z^2$, we denote by $dB_1((x_0,y_0))$ the set
$$
\{(x_0,y_0+1),(x_0,y_0-1),(x_0-1,y_0),(x_0+1,y_0)\}
$$
which is called \textbf{discrete 1-boundary of $(x_0,y_0)$}.
\item Let $(x_0,y_0)\in\mathbb Z^2$, we denote by $dB_2((x_0,y_0))$ the set
$$
\{(x_0+1,y_0+1),(x_0-1,y_0+1),(x_0-1,y_0-1),(x_0+1,y_0-1)\}
$$
which is called \textbf{discrete 2-boundary of $(x_0,y_0)$}.
\end{itemize}

\begin{definition}\label{def:simplecurve}
A \textbf{simple curve} in $\mathbb Z^2$ is a continuous path $x_0x_1\ldots x_{n-1}x_n$ such that
\begin{itemize}
\item The $x_i$'s are pairwise distinct for $i\in\{1,\ldots,n-1\}$.
\item Whenever $x_i\in dB_2(x_j)$, for $j>i$, then $j=i+2$ and
$$
x_{i+1}\in dB_1(x_i)\cap dB_1(x_j)
$$
\end{itemize}
\end{definition}

This definition might seem \emph{ad hoc}, but it is indeed exactly how simplicity behaves in a discrete setting. Indeed, the main property of the classical notion of simple curve $\gamma$ in $\mathbb R^2$ is not the injectivity of the mapping $t\rightarrow\gamma(t)$, but the fact (implied by the injectivity), that $\gamma$ can be \emph{re-constructed} as follows: let $Int(\gamma)$ be the bounded path-connected component of $\mathbb R^2\setminus\gamma$ and $(x_0,y_0)\in Int(\gamma)$. Construct four points of $\gamma$ as follows: $(x_0^+,y_0)$ is the first point of the half-line $y=y_0$, with $x>x_0$, hitting $\gamma$; analogously, construct $(x_0^-,y_0),(x_0,y_0^+),(x_0,y_0^-)$. Then $\gamma$ is uniquely determined by the set of these points, when $(x,y)$ runs over the path-connected component of $\mathbb R^2\setminus\gamma$ containing $(x_0,y_0)$. Notice that this re-construction principle is false in $\mathbb R^2$ for closed curve having fundamental group equal to $\mathbb Z$ or for the $8$-shape curve. In the following, we shows that our definition of simple curve in $\mathbb Z^2$ gives exactly the property which allow to make this re-construction procedure.
\\\\
Let $\gamma$ be a simple curve. Observe that, up to homotopy, we can suppose that $\gamma$ does not contain any \emph{unit square}; namely, it does not contain four points of the shape $(x_0,y_0),(x_0+1,y_0),(x_0+1,y_0+1),(x_0+1,y_0)$. Let $(x,y)\in\mathbb Z^2\setminus\gamma$, we construct (at most) four points in the following way:
\begin{itemize}\label{it:extremal}
\item $(x_0^+,y_0)$ is the first point (if exists) where the horizontal half-line $y=y_0$, for $x>x_0$, hits $\gamma$;
\item $(x_0^-,y_0)$ is the first point (if exists) where the horizontal half-line $y=y_0$, for $x<x_0$, hits $\gamma$;
\item $(x_0,y_0^+)$ is the first point (if exists) where the vertical half-line $x=x_0$, for $y>y_0$, hits $\gamma$;
\item $(x_0,y_0^-)$ is the first point (if exists) where the vertical half-line $x=x_0$, for $y<y_0$, hits $\gamma$;
\end{itemize}

\begin{definition}\label{def:quasiinternal}
A point $(x_0,y_0)$ is called \textbf{quasi-internal to $\gamma$} if all four points defined above exist.
\end{definition}

\begin{definition}\label{def:internal}
A subset $C$ of $\mathbb Z^2$ is called \textbf{internal to $\gamma$} if
\begin{itemize}
\item Every $(x,y)\in C$ is quasi-internal to $\gamma$,
\item $C$ is called under the \emph{reconstruction procedure}; namely, if $(x_0,y_0)\in C$, then any point $(x,y_0)$, with $x\in[x_0^-,x_0^+]$, belongs to $C$ and also any point $(x_0,y)$, with $y\in[y_0^-,y_0^+]$, belongs to $C$.
\end{itemize}
In this context, the points of the shape $(x_0^\pm,y_0)$ and $(x_0,y_0^\pm)$, with $(x_0,y_0)\in C$, are called \textbf{extremal points of $C$ with respect to $\gamma$}. The set of extremal points of $C$ with respect to $\gamma$ is denoted by $Extr_\gamma(C)$.
\end{definition}

It is clear that one of the crucial part of the proof of the Jordan curve theorem in $\mathbb Z^2$ is the existence of an internal set. For now, let us suppose that it exists and prove the re-construction principle.
\\\\
Let $C\subseteq \mathbb Z_2$ be internal to $\gamma$ and suppose that for any $e\in Extr_\gamma(C)$ one of the following properties hold:
\begin{itemize}
\item $|dB_1(e)\cap Extr_\gamma(C)|=2$
\item $|dN_1(e)\cap Extr_\gamma(C)|=1$ and $|dB_2(e)\cap Extr_\gamma(C)|=1$
\item $|dN_1(e)\cap Extr_\gamma(C)|=0$ and $|dB_2(e)\cap Extr_\gamma(C)|=2$
\end{itemize}
If $e$ is a point of the second shape, we denote by $e'$ the unique point in $dB_2(e)\cap Extr_\gamma(C)$; if $e$ is a point of the third shape, we denote by $e',e''$ the two points in $dB_2(e)\cap Extr_\gamma(C)$. Under this hypothesis, we define the $\gamma$-completion of $C$ to be the set $\overline C^\gamma$ formed by the union between $C$ and all points $e'$ and $e',e''$. Now we can prove the \emph{reconstruction principle} in $\mathbb Z^2$.

\begin{lemma}\label{lem:reconstruction}
Let $\gamma$ be a simple circuit in $\mathbb Z^2$ which does not contain unit squares and let $C$ be a finite subset of $\mathbb Z^2$ which is internal to $\gamma$. Then $\gamma=\overline C^\gamma$
\end{lemma}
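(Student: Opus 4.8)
The plan is to prove the set equality by the two inclusions $\overline{C}^\gamma\subseteq\gamma$ and $\gamma\subseteq\overline{C}^\gamma$, viewing $\gamma=x_0x_1\ldots x_{n-1}x_n$ as the cyclically ordered set of its (distinct) vertices and keeping in mind that consecutive vertices are edge-adjacent, i.e. $x_{i+1}\in dB_1(x_i)$.

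The inclusion $\overline{C}^\gamma\subseteq\gamma$ is the easy direction. Every point of $Extr_\gamma(C)$ lies on $\gamma$ by definition, being the first intersection of a cardinal half-line with $\gamma$. The only other points of $\overline{C}^\gamma$ are those inserted between an extremal point $e$ and a diagonally adjacent extremal point $e'\in dB_2(e)\cap Extr_\gamma(C)$, and here simplicity does the work: since $e,e'\in\gamma$ with $e\in dB_2(e')$, Definition \ref{def:simplecurve} forces $e,e'$ to occur at positions $j$ and $j+2$ along $\gamma$, with the intermediate vertex $x_{j+1}$ lying in $dB_1(e)\cap dB_1(e')$. Thus the inserted point is exactly $x_{j+1}$, a vertex of $\gamma$, and the first inclusion follows.

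For the converse I would show that $\overline{C}^\gamma$ cannot skip any vertex of $\gamma$, and then close by simplicity. The three alternatives assumed for each $e\in Extr_\gamma(C)$ say precisely that $e$ has exactly two neighbours in $\overline{C}^\gamma$, each reached either by an edge-step to an adjacent extremal point or through a bridge; hence $\overline{C}^\gamma$ is a finite $2$-regular configuration, a disjoint union of closed curves sitting inside $\gamma$. The key claim is that when two points of $\overline{C}^\gamma$ are consecutive in such a closed curve, their positions along $\gamma$ differ by exactly $1$ or $2$, and a difference of $2$ always arrives with the intermediate vertex inserted as a bridge. The difference-$2$ case is handled as above: diagonal adjacency forces, by simplicity, the gap vertex $x_{j+1}$ to be the bridge, so it already belongs to $\overline{C}^\gamma$ and nothing is skipped; a difference of $1$ means the two points are $\gamma$-consecutive, so again nothing is skipped. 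Since no vertex is skipped and $\overline{C}^\gamma\neq\varnothing$, following any component around forces it to meet every vertex of $\gamma$, and because $\gamma$ is a simple circuit this component is all of $\gamma$, giving $\gamma\subseteq\overline{C}^\gamma$.

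The heart of the argument—and the step where the hypothesis that $\gamma$ contains no unit square is indispensable—is the claim that an edge-step of $\overline{C}^\gamma$ between two extremal points $p,q$ is always a step between $\gamma$-consecutive vertices, i.e. that $\overline{C}^\gamma$ cannot jump across a long arc of $\gamma$. The mechanism I expect to use is the following: if $p=(a,b)$ and $q=(a+1,b)$ were edge-adjacent extremal points that are \emph{not} consecutive on $\gamma$, then neither can be a curve-neighbour of the other, so—using that $C$ is closed under the reconstruction procedure, which pins down where the interior of $C$ sits around $p$ and $q$—the two curve-neighbours of $p$ are forced to be $(a,b\pm1)$ and those of $q$ to be $(a+1,b\pm1)$; but then $(a,b),(a+1,b),(a+1,b+1),(a,b+1)$ all lie on $\gamma$, a unit square, contradicting the hypothesis. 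A $\gamma$-distance of exactly $3$ is excluded even more directly, since a closed walk of four edge-steps through distinct lattice points must bound a unit square. I expect the bulk of the write-up to be the routine but slightly tedious verification of this claim across the symmetric configurations (horizontal versus vertical rays, and the several positions the interior of $C$ can occupy around $p$ and $q$); this is the one place where a careless argument could leave an unbridged gap, and it is precisely the no-unit-square assumption that forbids the thin-wall configurations which would produce such a gap.
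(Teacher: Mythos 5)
Your overall skeleton (extremal points lie on $\gamma$ by definition; bridge points lie on $\gamma$ by the simplicity condition; then a traversal argument to get $\gamma\subseteq\overline C^\gamma$) is the same as the paper's, and your first inclusion is fine. Your ``heart'' claim is also true, and in fact cheaper than you expect: if $p=(a,b)$ and $q=(a+1,b)$ lie on $\gamma$ but are not $\gamma$-consecutive, then at least one of $(a,b\pm1)$ is a curve-neighbour of $p$, say $(a,b+1)$; since $(a,b+1)\in dB_2(q)$, simplicity forces an intermediate vertex of $\gamma$ in $dB_1((a,b+1))\cap dB_1(q)=\{(a,b),(a+1,b+1)\}$, and it cannot be $(a,b)$ (that would make $p,q$ consecutive), so $(a+1,b+1)\in\gamma$ and the four corners $(a,b),(a+1,b),(a+1,b+1),(a,b+1)$ form a unit square on $\gamma$ --- contradiction. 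So the one place where you invoke the reconstruction closure of $C$, it is actually dispensable.

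The genuine gap is the closing global step. From ``$\overline C^\gamma$ is $2$-regular and consecutive points of a component differ by $1$ or $2$ along $\gamma$, with the bridge vertex present in the difference-$2$ case'' you conclude that a component must meet every vertex of $\gamma$. That inference is invalid: take three $\gamma$-consecutive vertices $v_0v_1v_2$ at a turn of $\gamma$, so that $v_2\in dB_2(v_0)$. The set $\{v_0,v_1,v_2\}$ is a closed $2$-regular configuration ($v_1$ has the two edge-partners $v_0,v_2$, i.e.\ the first alternative; $v_0$ and $v_2$ each have one edge-partner and one diagonal partner, i.e.\ the second alternative, with bridge $v_1$ already present), every consecutive pair differs by $1$ or $2$ with the intermediate vertex inserted, and yet it misses all the rest of $\gamma$. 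Nothing in the conditions your traversal actually uses excludes this; what excludes it is the internality of $C$, which you never invoke at this juncture. One must show that at each $e\in Extr_\gamma(C)$ the two partners of $e$ lie on the two \emph{distinct} sides of $\gamma$ through $e$ (predecessor and successor direction both captured), and this is precisely where the paper spends its effort: since $e$ is extremal and $C$ is closed under reconstruction, an edge-neighbour of $e$ lies in $C$, and the case analysis over which of the remaining edge-neighbours and corner points lie on $\gamma$ (using no-unit-squares to force the off-curve corners into $C$, whence the adjacent curve points become extremal as first hits of rays from those $C$-points) shows that the local piece of $\gamma$ through $e$, on both sides, belongs to $\overline C^\gamma$. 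Once that both-sides local capture is established, your monotone wrap-around conclusion (and the paper's ``sub-circuit must coincide with $\gamma$'' conclusion) does go through; without it, the inclusion $\gamma\subseteq\overline C^\gamma$ is unsupported.
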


\begin{proof}
Fix $e=(x_0,y_0)\in Extr_\gamma(C)$. Since $(x_0,y_0)$ is extremal, we know that there is at least one point in $dN_1(x_0,y_0)$ which belongs to $C$. We can suppose that it is $(x_0+1,y_0)$. Since $\gamma$ is continuous and $(x_0,y_0)\in\gamma$, we know that there are at least two points in $dN_1(x_0,y_0)$ belonging to $\gamma$. Now, one has just several possibilities to study. Let us go through a couple of them (the remaining will be left to the reader).
\\\\
\textbf{First case:}\\
$(x_0,y_0-1),(x_0,y_0+1)\in\gamma$ and $(x_0+1,y_0-1),(x_0+1,y_0+1)\notin\gamma$. Since $C$ is closed under reconstruction, it follows that $(x_0+1,y_0-1),(x_0+1,y_0+1)\in C$ In this case it is evident that $(x_0,y_0-1)$ and $(x_0,y_0+1)$ are extremal and so we belong to the first of the previous three possibilities and there is no completion to do.\\\\

\textbf{Second case:}\\
$(x_0+1,y_0-1),(x_0+1,y_0+1)\in\gamma$. Observe that these two points belong also to $Extr_\gamma(C)$ and so we belong to the third possibility and we have $e'=(x_0,y_0+1)$ and $e''=(x_0,y_0-1)$. Note that $e',e''\in\gamma$, since $\gamma$ is simple.\\\\

\textbf{Third case:}\\
$(x_0,y_0-1),(x_0+1,y_0+1)\in\gamma$ and $(x_0+1,y_0-1)\notin\gamma$. Since $C$ is closed under reconstruction, it follows that $(x_0+1,y_0-1)\in C$. So we are in the second possibility and so $e'=(x_0,y_0+1)$. Notice that $e'\in\gamma$, since $\gamma$ is simple.
\\\\
And so on tediously. Now, observe that we have proved that for any $e\in Extr_\gamma(C)$, we can find a piece of $\gamma$ in $dB_1(e)\cup dB_2(e)$ and this piece of $\gamma$ belongs, by construction, to $\overline C^\gamma$. Since $C$ is finite, this procedure gives rise a sub-circuit of $\gamma$, which has to coincide with $\gamma$, since the latter is simple and does not contain unit squares.
\end{proof}

\begin{theorem}\label{th:jordanbrouwer}
Let $\gamma$ be a non-constant simple circuit in $\mathbb Z^2$ not containing squares\footnote{This assumption is, in some sense, without loss of generality, because we can always assume it \emph{up to homotopic equivalence}.}. Then $\mathbb Z^2\setminus\gamma$ has exactly two path-connected components, denoted by $Ext(\gamma)$ and $Int(\gamma)$ and the following properties are satisfied:
\begin{itemize}
\item $Int(\gamma)$ is finite;
\item $Ext(\gamma)$ is infinite;
\item $\gamma=\overline{Extr_\gamma(Int(\gamma))}^\gamma$
\end{itemize}
\end{theorem}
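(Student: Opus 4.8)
The plan is to combine a crossing-parity argument with the reconstruction principle of Lemma \ref{lem:reconstruction}; throughout I write $\gamma$ both for the circuit and for its (finite) underlying point set. First I would dispose of the infinite component. Since $\gamma$ is finite, choose $R$ so large that $\gamma\subseteq[-R,R]^2$. Every grid point outside this square avoids $\gamma$, and the complement of a square is $4$-connected, hence path-connected in the sense of the continuous-path definition; so all such points lie in one path-connected component, which is infinite. Conversely any infinite component contains points of arbitrarily large norm, so it meets this region and therefore coincides with it. This already gives a \emph{unique} infinite component, which I call $Ext(\gamma)$, and establishes that $Ext(\gamma)$ is infinite.

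Next comes the separation. For $p=(x_0,y_0)\in\mathbb Z^2\setminus\gamma$ I would define a parity $\nu(p)\in\{0,1\}$ by counting, modulo $2$, the number of times the rightward horizontal ray $\{(x,y_0):x>x_0\}$ meets $\gamma$, where a maximal horizontal run of $\gamma$ along the line $y=y_0$ is counted as a crossing or not according to whether $\gamma$ enters and leaves it on the same side or on opposite sides. The two key facts are: (i) $\nu$ is constant on each path-connected component of $\mathbb Z^2\setminus\gamma$, since the crossing count changes by an even number when $p$ is moved to a $4$-adjacent point off $\gamma$ (by $0$ for a horizontal step, and by an even amount for a vertical step); and (ii) $\nu\equiv 0$ on $Ext(\gamma)$, because for $x_0>R$ the ray misses $\gamma$. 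Finally, since $\gamma$ is a non-constant simple circuit it has a point with two off-curve $4$-neighbours of opposite parity across a vertical edge, so the value $1$ is attained; any $p$ with $\nu(p)=1$ then lies outside $Ext(\gamma)$, producing a second, necessarily bounded (hence finite) component. From such a $p$ one reaches a quasi-internal point, and closing up under the reconstruction procedure of Definition \ref{def:internal} produces a non-empty \emph{finite} internal set $C$.

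The hardest step is precisely this one: showing that $\gamma$ genuinely separates and, hand in hand with it, that the parity count is well defined and locally constant. The subtle cases are the tangential passages of the ray along horizontal segments of $\gamma$, which must be resolved using the constraints on $dB_1$ and $dB_2$ in Definition \ref{def:simplecurve} together with the standing assumption that $\gamma$ contains no unit square; these are exactly the hypotheses that prevent the $8$-shape pathology described before the statement and make fact (i) true.

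It remains to see that there are exactly two components and to recover $\gamma$. For uniqueness I would prove that the odd set $\{p:\nu(p)=1\}$ is itself path-connected, whence $Int(\gamma):=\{\nu=1\}$ is a single finite component and $Ext(\gamma)=\{\nu=0\}$ is the only other one; connectedness of the odd set follows again from the reconstruction procedure, as any two odd points are joined by horizontal and vertical runs that stay strictly between consecutive crossings of $\gamma$ and hence remain odd. Taking $C=Int(\gamma)$, one checks that $Int(\gamma)$ is internal to $\gamma$ and that each of its extremal points falls into one of the three local alternatives listed before Lemma \ref{lem:reconstruction}; that lemma then yields the reconstruction identity $\gamma=\overline{Extr_\gamma(Int(\gamma))}^\gamma$, which is the last assertion. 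Since finiteness of $Int(\gamma)$ and infiniteness of $Ext(\gamma)$ were obtained above, this completes the proof.
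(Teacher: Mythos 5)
Your separation strategy is genuinely different from the paper's: you use the classical crossing-parity argument, whereas the paper never counts crossings. There, $Int(\gamma)$ is \emph{defined} as the path-connected component of an explicit seed point $(x_1+1,y^-+1)$ sitting above the bottom-most horizontal run of $\gamma$ (whose existence uses simplicity and the no-unit-square hypothesis), boundedness is proved directly by showing $Int(\gamma)\subseteq[x^-,x^+]\times[y^-,y^+]$, and connectedness of $Ext(\gamma)$ is obtained by running the reconstruction procedure from an arbitrary exterior point and showing it can never terminate, since termination would force the point into $Int(\gamma)$ via Lemma \ref{lem:reconstruction}. You and the paper handle the final identity $\gamma=\overline{Extr_\gamma(Int(\gamma))}^\gamma$ the same way. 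However, your component count has a genuine gap: parity partitions the path-connected components into odd and even ones, and you establish (a) a unique infinite component, necessarily even, and (b) that the odd set is connected, hence a single finite component. But nothing you say excludes a \emph{bounded} component of even parity: from "$\{\nu=1\}$ is connected" you cannot conclude that $\{\nu=0\}$ is a single component, since it could a priori consist of the infinite component together with bounded even pockets. You must additionally prove that every even point can be joined to infinity without meeting $\gamma$ --- which is exactly the content of the paper's escape-to-infinity step via the never-terminating reconstruction procedure --- or, equivalently, that $\{\nu=0\}$ is connected.

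A second, subtler problem: your fact (i) only treats moves between $4$-adjacent points off $\gamma$, but path-connectedness in this paper is the subspace notion for $\mathbb Z^2\setminus\gamma$ with the induced Euclidean metric. A point of the complement all four of whose grid neighbours lie on $\gamma$ has its discrete $1$-neighbourhood at distance $\geq\sqrt2$, so continuous paths in the complement can in principle take non-unit steps, and parity invariance must be verified for those as well, or such configurations must be ruled out using the $dB_2$ clause of Definition \ref{def:simplecurve} together with the no-unit-square hypothesis (the $8$-point ring around a single cell shows the configuration is realisable, though there $\gamma$ must coincide with the ring). Finally, the existence of an odd point and the connectedness of the odd set are asserted rather than proved; both are real ingredients in the classical digital Jordan argument, and your one-line justifications ("a point with two off-curve neighbours of opposite parity", "runs that stay strictly between consecutive crossings") do not yet constitute proofs --- though, to be fair, the paper's own treatment of the corresponding steps is also schematic.
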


\begin{proof}
For the convenience of the reader, we divide the proof in several steps.
\\\\
\textbf{First step: definition of $Int(\delta)$ and $Ext(\delta)$.}\\
Let $\gamma=c_0c_1\ldots c_{n-1}c_n$ and $\Gamma=\{c_0,\ldots c_{n-1}\}$. Fix the following integers:
$$
x^-=\min\{x:(x,y)\in\Gamma\}
$$
$$
x^+=\max\{x:(x,y)\in\Gamma\}
$$
$$
y^-=\min\{y:(x,y)\in\Gamma\}
$$
$$
y^+=\max\{y:(x,y)\in\Gamma\}
$$

Now, for any integer $k\geq0$ and for any integer $y\in[y^--k,y^++k]$ consider the continuous path
$$
\gamma_{y,k}=(x^--k,y)(x^--k+1,y)\ldots(x^++k-1,y)(x^++k,y)
$$
and denote by $\Gamma_{y,k}$ the set $\{(x^--k,y),(x^--k+1,y),\ldots,(x^++k-1,y),(x^++k,y)\}$

Let $(x_1,y^-)(x_2,y^-)\ldots(x_r,y^-)$ be the \emph{first} continuous path obtained by the intersection between $\Gamma$ and the \emph{horizontal line} $y=y^-$; i.e. $x_1$ is the minimal $x$ such that $(x,y^-)\in\Gamma$. Observe that $r\neq1$. Indeed, if it was $r=1$, then $\gamma$ would not have been simple. It follows that $r\geq2$ and therefore $\Gamma$ contains $(x_1,y^-),(x_1+1,y^-)$ and $(x_1,y^-+1)$. Hence, $(x_1+1,y^-+1)\notin\Gamma$ (since $\gamma$ does not contain unit squares). We define $Int(\gamma)$ to be the path-connected component of $\mathbb Z^2\setminus\gamma$ containing $(x_1+1,y^-+1)$. Of course, we now define $Ext(\gamma)$ to be the complementary of $Int(\delta)\cup\gamma$.
\\\\
\textbf{Second step: $Int(\gamma)$ is finite and $Ext(\gamma)$ is infinite.}\\
By definition $Int(\gamma)$ is non-empty. In order to prove that it is finite and that $Ext(\gamma)$ is infinite, we prove that $Int(\gamma)$ is contained in the rectangle $R=[x^-,x^+]\times[y^-,y^+]$. Let us denote by $(z_0,w_0)$ the point $(x_1+1,y^-+1)$ that we have found in the previous step. Let $(z_1,w_1)\in dN_1((z_0,y_0))\cap Int(\gamma)$. There are two possibilities: $(z_1,w_1)=(z_0+1,w_0)$ or $(z_1,w_1)=(z_0,w_0+1)$. In each case, one has $z_1\geq x^-$ and $w_1\geq y^-$. Now consider the vertical line $z=z_1$ and suppose, by contradiction, that it intersects $\gamma$ only in $(z_1,w_1-1)$. Since $\gamma$ is simple, it necessarily follows that either $\gamma\subseteq\{(x,y):x<z_1\}$ or $\gamma\subseteq\{(x,y):x>z_1\}$. But also in these cases, one contradicts the fact that $\gamma$ is simple. It follows that the line $x=z_1$ intersects $\gamma$ in another point $(x_1,y)$ and then $w_1\leq y\leq y^+$. It follows that $w_1\leq y_+$. Analogously, taking the horizontal line $y=w_1$, one gets $z_1\leq x^+$. Hence $(z_1,w_1)\in R$. Now repeat the argument for any $(z_2,w_2)\in dN_1((z_1,w_1))\cap Int(\gamma)$ and we get the same. Since $Int(\gamma)$ is, by definition, the path-connected component containing $(z_0,w_0)$ we obtain indeed that $Int(\gamma)\subseteq R$.
\\\\

\textbf{Third step: $Int(\gamma)$ and $Ext(\gamma)$ are path-connected.}\\
$Int(\gamma)$ is path-connected by definition. Let us prove that also $Ext(\gamma)$ is path-connected. Let $(x_0,y_0)\in Ext(\gamma)$, it suffices to find a continuous path starting from $(x_0,y_0)$ and going outside $R$ without hitting $\gamma$. We proceed by making continuously the reconstruction procedure:
\begin{itemize}
\item We run continuously along the half-line $x=x_0$, with $y>y_0$. If we go outside $R$ without hitting $\delta$, the proof is over; otherwise, let $(x_0,y_0^+)\in\delta$ be the first point where we hit $\delta$. At the same way, we construct $(x_0,y_0^-)$, running continuously along the half-line $x=x_0$, with $y<y_0$; then, we construct $(x_0^-,y_0)$ and finally $(x_0^+,y_0)$. In this way we construct a continuous path $\delta_1$.

\item For any point in $\delta_1$, we repeat the argument in the first step, constructing a continuous path $\delta_2$.

\item We repeat the argument, until possible.
\end{itemize}

Now, suppose that the procedure ends after a finite number of steps, we want to obtain a contradiction by showing that $(x_0,y_0)\in Int(\gamma)$. Indeed, apply the reconstruction procedure first starting from $(x_0,y_0)$ and then starting from $(z_0,w_0)$ (it is the same point as the previous step of the proof). We obtain two internal sets $C$ and $C'$ whose extremal points define, thanks to the reconstruction principle, the same circuit. In particular, it follows that $C\cap C'\neq\emptyset$. Let $(x,y)\in C\cap C'$ and let $C''$ be the set obtained by making the reconstruction procedure starting from $(x,y)$. It is clear that $C=C''=C'$ and, in particular, $C=C'$. It follows that $(x_0,y_0)$ is in the same path-connected component of $\mathbb Z^2\setminus\gamma$ containing $(z_0,w_0)$, which is indeed, by definition, $Int(\gamma)$. Hence, the reconstruction procedure never ends and so it is always possible to add points. It follows that we have a continuous path starting in $(x_0,y_0)$ and ending arbitrarily far away.
\\\\
\textbf{Fourth step: $\gamma=\overline{Extr_\gamma(Int(\gamma))^\gamma}$}\\
This is now clear, from the proof of the previous steps and from the reconstruction principle.
\end{proof}

We conclude this subsection with some remarks.

\begin{remark}
\begin{enumerate}
\item It seems to be likely that $\pi(\gamma)=\mathbb Z$, $\pi_1(Ext(\gamma))=\mathbb Z$ and $Int(\gamma)$ is contractible, i.e. homotopic equivalent to one point.
\item It is likely that some weaker version of the Jordan curve theorem holds for continuous circuits, possibly not simple, having fundamental group equal to $\mathbb Z$. In fact, it is well possible that such curves separate the grid $\mathbb Z^2$ in \textbf{at least two} path-connected components.
\end{enumerate}
\end{remark}

\subsection{P.N.Jolissaint-Valette's inequality for finite metric spaces}\label{se:jolivale}

In a lecture at Lausanne, Alain Valette presented a new and interesting inequality for the $\ell_p$-distortion of a finite connected graph, proved by himself and his co-author Pierre-Nicolas Jolissaint (see \cite{Jo-Va11}, Theorem 1). At that point one of the auditors came up with the question whether there is some version for general finite metric spaces of that inequality. Here we want to propose such a more general version. The author thanks Pierre-Nicolas Jolissaint for reading an earlier version of this section and suggesting many improvements.
\\\\
Let $(X,d)$ be a finite metric space and let $X_1,\ldots X_n$ be the partition of $X$ in path-connected components. The basic idea is clearly to apply P.N.Jolissaint-Valette's inequality on each of them, but unfortunately this application is not straightforward, since a path-connected component might not look like a graph (think, for instance, to the space $[-n,n]^2\setminus\{(0,0)\}$ equipped with the metric induced by the standard embedding into $\mathbb R^2$). So we have to be a bit careful to apply P.N.Jolissaint-Valette's argument.

\begin{remark}\label{rem:normalform}
Since the $\ell_p$-distortion does not depend on rescaling the metric and since we are going to work on each path-connected component separately, we can suppose that each $X_i$ is in normal form.
\end{remark}

Since we are going to work on a fixed path-connected component, let us simplify the notation assuming directly that $X$ is finite path-connected metric space in normal form. At the end of this section it will be easy to put together all path-connected components.

Let $x,y\in X$, $x\neq y$ and let $x_0x_1\ldots x_{n-1}x_n$ be a continuous path joining $x$ and $y$ of minimal length $n$. Denote by $s$ the floor of $d(x,y)$, i.e. $s$ is the greatest positive integer smaller than or equal $d(x,y)$. Notice that $s\geq1$, since $X$ is in normal form. Denote by $\mathcal P(x,y)$ the set of coverings $P=\{p_1,\ldots,p_s\}$ of the set $\{0,1,\ldots,n\}$ such that
\begin{itemize}
\item If $a\in p_i$ and $b\in p_{i+1}$, then $a\leq b$,
\item the greatest element of $p_i$ equals the smallest element of $p_{i+1}$.
\end{itemize}
We denote by $p_i^-$ and $p_i^+$ respectively the smallest and the greatest element of $p_i$.

Now we introduce the following set

\begin{align}\label{eq:edgesmetric}
E(X)=\left\{(e^-,e^+)\in X\times X: \exists x,y\in X, p\in\mathcal P(x,y):e^-=p_i^-, e^+=p_i^+\right\}
\end{align}

\begin{remark}\label{rem:metricedgesvsgraphedges}
If $X=(V,E)$ is a finite connected graph equipped with the shortest path metric, then $E(X)=E$. Indeed in this case $s=n$ and so the only coverings belonging to $\mathcal P(x,y)$ have the shape $p_i=\{x_{i-1},x_i\}$, where the $x_i$'s are taken along a shortest path joining $x$ and $y$.
\end{remark}

We define a metric analog of the $p$-spectral gap: for $1\leq p<\infty$, we set

\begin{align}\label{eq:spectralgap}
\lambda_1^{(p)}=\inf\left\{\frac{\sum_{e\in E(X)}|f(e^+)-f(e^-)|^p}{\inf_{\alpha\in\mathbb R}\sum_{x\in X}|f(x)-\alpha|^p}\right\}
\end{align}

where the infimum is taken over all functions $f\in\ell^p(X)$ which are not constant.

\begin{lemma}\label{lem:jolivalette}
Let $(X,d)$ be a finite path-connected metric space in normal form.
\begin{enumerate}
\item For any permutation $\alpha\in Sym(X)$ and $F:X\rightarrow\ell^p(\mathbb N)$, one has
$$
\sum_{x\in X}||F(x)-F(\alpha(x))||_p^p\leq2^p\sum_{x\in X}||F(x)||_p^p
$$
\item For any bi-lipschitz embedding\footnote{For the convenience of the reader we recall that a bi-lipschitz embedding, in this context, is a mapping $F:X\rightarrow\ell^p$ such that there are constants $C_1,C_2$ such that for all $x,y\in X$ one has
$$
C_1d(x,y)\leq d_p(F(x),F(y))\leq C_2d(x,y)
$$
where $d_p$ stands for the $\ell^p$-distance. It is clear that $F$ is injective and so we can consider $F^{-1}:F(X)\rightarrow X$. So we can define
$$
||F||_{Lip}=\sup_{x\neq y}\frac{d_p(F(x),F(y))}{d(x,y)}
$$
and
$$
||F^{-1}||_{Lip}=\sup_{x\neq y}\frac{d(x,y)}{d_p(F(x),F(y))}
$$
The product $||F||_{Lip}||F^{-1}||_{Lip}$ is called \textbf{distortion of $F$} and denoted by $Dist(F)$.} $F:X\rightarrow\ell^p(\mathbb N)$, there is another bi-lipschitz embedding $G:X\rightarrow\ell^p(\mathbb N)$ such that $||F||_{Lip}||F^{-1}||_{Lip}=||G||_{Lip}||G^{-1}||_{Lip}$ and
$$
\sum_{x\in X}||G(x)||_p^p\leq\frac{1}{\lambda_1^{(p)}}\sum_{e\in E(X)}||G(e^+)-G(e^-)||_p^p
$$
\end{enumerate}
\end{lemma}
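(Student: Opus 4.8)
The plan is to handle the two parts separately: part (1) is a short convexity estimate, while part (2) is the substantive claim, proved by optimally \emph{centering} the embedding one coordinate at a time. For part (1) I would begin with the triangle inequality in $\ell^p$, which gives $\|F(x)-F(\alpha(x))\|_p\leq\|F(x)\|_p+\|F(\alpha(x))\|_p$ for each $x\in X$. Raising to the $p$-th power and invoking the elementary bound $(a+b)^p\leq 2^{p-1}(a^p+b^p)$ for $a,b\geq 0$, one obtains
\[
\sum_{x\in X}\|F(x)-F(\alpha(x))\|_p^p\leq 2^{p-1}\sum_{x\in X}\left(\|F(x)\|_p^p+\|F(\alpha(x))\|_p^p\right).
\]
Since $\alpha$ permutes the finite set $X$, the reindexing $x\mapsto\alpha(x)$ gives $\sum_{x\in X}\|F(\alpha(x))\|_p^p=\sum_{x\in X}\|F(x)\|_p^p$, so the right-hand side collapses to $2^{p-1}\cdot 2\sum_{x\in X}\|F(x)\|_p^p=2^p\sum_{x\in X}\|F(x)\|_p^p$, which is the asserted inequality.

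For part (2), the preliminary observation is that the distortion $\|F\|_{Lip}\|F^{-1}\|_{Lip}$ depends only on the differences $F(x)-F(y)$ and is therefore unchanged when $F$ is translated by a fixed vector of $\ell^p(\mathbb N)$. I would use this freedom to center $F$ coordinatewise. Writing $F(x)=(F_k(x))_{k\in\mathbb N}$, for each $k$ the real function $\alpha\mapsto\sum_{x\in X}|F_k(x)-\alpha|^p$ is convex and coercive, hence attains its minimum at some $c_k\in\mathbb R$ (a median when $p=1$); set $c=(c_k)_k$ and $G=F-c$. By construction $G(e^+)-G(e^-)=F(e^+)-F(e^-)$ for every edge, so the right-hand sides attached to $F$ and to $G$ coincide, and $G$ inherits the distortion of $F$.

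The heart of the matter is then a coordinatewise application of the defining infimum \eqref{eq:spectralgap} of $\lambda_1^{(p)}$. For each fixed $k$, applying it to $f=G_k$ (the inequality being trivial when $G_k$ is constant), and using that our choice of $c_k$ forces $\inf_{\alpha}\sum_{x\in X}|G_k(x)-\alpha|^p=\sum_{x\in X}|G_k(x)|^p$, yields
\[
\sum_{x\in X}|G_k(x)|^p\leq\frac{1}{\lambda_1^{(p)}}\sum_{e\in E(X)}|G_k(e^+)-G_k(e^-)|^p.
\]
Summing over $k\in\mathbb N$ and interchanging the nonnegative sums produces exactly
\[
\sum_{x\in X}\|G(x)\|_p^p\leq\frac{1}{\lambda_1^{(p)}}\sum_{e\in E(X)}\|G(e^+)-G(e^-)\|_p^p,
\]
the desired estimate.

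The step I expect to require the most care is not an estimate but a well-definedness point: one must check that $G$ genuinely maps into $\ell^p(\mathbb N)$, i.e. that $c\in\ell^p$. This follows by isolating a single point $x_0\in X$: the displayed coordinatewise bound gives $|F_k(x_0)-c_k|^p\leq\sum_{x\in X}|G_k(x)|^p\leq\frac{1}{\lambda_1^{(p)}}\sum_{e\in E(X)}|F_k(e^+)-F_k(e^-)|^p$, and summing over $k$ the right-hand side is finite because $F$ takes values in $\ell^p$ and $E(X)$ is finite (as $X$ is). Hence $F(x_0)-c\in\ell^p$, so $c\in\ell^p$ and each $G(x)=F(x)-c\in\ell^p$, completing the argument.
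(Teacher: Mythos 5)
Your proof is correct and follows essentially the same route as the paper, which disposes of part (1) by citing Lemma 1 of \cite{Jo-Va11} (whose proof is exactly your triangle-inequality-plus-convexity-plus-reindexing argument) and of part (2) by invoking the ``purely algebraic'' centering construction of \cite{Gr-No10} (your coordinatewise minimizers $c_k$ are precisely that construction, with the inequality then following from the definition of $\lambda_1^{(p)}$, as the paper states). Your extra verification that $c\in\ell^p(\mathbb N)$ is a detail the paper leaves to the cited source; note only that your version of it implicitly assumes $\lambda_1^{(p)}>0$, which is harmless since the lemma is vacuous otherwise, and can anyway be avoided by observing that each $c_k$ lies between $\min_{x}F_k(x)$ and $\max_{x}F_k(x)$, so that $\sum_k|c_k-F_k(x_0)|^p\leq\sum_{x\in X}\|F(x)-F(x_0)\|_p^p<\infty$.
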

\begin{proof}
\begin{enumerate}
\item This proof is absolutely the same as the proof of Lemma 1 in \cite{Jo-Va11}.
\item Observe that the construction of $G$ made in \cite{Gr-No10} is purely algebraic and so we can apply it. The inequality just follows from the definition of $\lambda_1^{(p)}$.
\end{enumerate}
\end{proof}

Now we have to prove a version for metric spaces of a lemma already proved by Linial and Magen for finite connected graph (see \cite{Li-Ma00}, Claim 3.2). We need to introduce a number that measures how far is the metric space to be graph. We set

\begin{align}\label{eq:dis}
d(X)=\max_{e\in E(X)}d(e^-,e^+)
\end{align}

We told that $d(X)$ measures how far $X$ is far from being a graph since it is clear that $d(X)=1$ if and only if $X$ is a finite connected graph, in the sense that the metric of $X$ is exactly the length of the shortest path connected two points. Indeed, in one sense, it is trivial: if $X$ is a finite graph, then $E(X)=E$ (by Remark \ref{rem:metricedgesvsgraphedges}) and $d(X)=1$, since the space is supposed to be in normal form. Conversely, suppose that $d(X)=1$, choose two distinct points $x,y\in X$ and let $x_0x_1\ldots x_{n-1}x_n$ be a continuous path of minimal length such that $x_0=x$ and $x_n=y$. Of, course $s\leq d(x,y)\leq n$. So, it suffices to prove that $s=n$. In order to do that, suppose that $s<n$ and observe that every $p\in\mathcal P(x,y)$ would contain a $p_i$ with at least three points $p_{i}^-=x_{i-1},x_i,x_{i+1}=p_i^+$ (we suppose that they are exactly three, since the general case is similar. Since $X$ is in normal form, it follows that $d(x_{i-1},x_i)=d(x_i,x_{i+1})=1$. Now, suppose that $d(X)=1$, it follows that also $d(x_{i-1},x_{i+1})=1$ and then the path $x_0\ldots x_{i-1}x_{i+1}\ldots x_n$ is still a continuous path connecting $x$ with $y$, contradicting the minimality of the length of the previous path.

\begin{lemma}\label{lem:linialmagen}
Let $(X,d)$ be a finite path-connected metric space in normal form and $f:X\rightarrow\mathbb R$. Then
\begin{align}
\max_{x\neq y}\frac{|f(x)-f(y)|}{d(x,y)}\leq\max_{e\in E(X)}|f(e^+)-f(e^-)|\leq d(X)\max_{x\neq y}\frac{|f(x)-f(y)|}{d(x,y)}
\end{align}
\end{lemma}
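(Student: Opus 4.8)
The plan is to prove the two stated inequalities separately, writing $L=\max_{x\neq y}|f(x)-f(y)|/d(x,y)$ for the Lipschitz constant of $f$ and $M=\max_{e\in E(X)}|f(e^+)-f(e^-)|$ for the middle quantity, so that the claim reads $L\le M\le d(X)\,L$.

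The right-hand inequality $M\le d(X)\,L$ is the easy direction, and I would dispose of it immediately. For any edge $e\in E(X)$ its endpoints $e^-,e^+$ are genuine points of $X$, so by definition of $L$ one has $|f(e^+)-f(e^-)|\le L\,d(e^-,e^+)\le L\,d(X)$, the last step being exactly the definition of $d(X)$ in \eqref{eq:dis}. Taking the maximum over all $e\in E(X)$ yields $M\le d(X)\,L$.

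The left-hand inequality $L\le M$ is the substantive direction, and this is where the combinatorics of $\mathcal P(x,y)$ comes in. Fix $x\neq y$, take a continuous path $x_0x_1\ldots x_n$ of minimal length joining them, and set $s=\lfloor d(x,y)\rfloor$. Choose any covering $P=\{p_1,\ldots,p_s\}\in\mathcal P(x,y)$; such a covering exists because a minimal path in normal form has unit steps, whence $n\ge d(x,y)\ge s\ge 1$, leaving enough indices in $\{0,\ldots,n\}$ to form $s$ ordered, boundary-overlapping blocks. The two defining properties of $\mathcal P(x,y)$ — that the blocks are ordered and that the greatest index of $p_i$ equals the smallest index of $p_{i+1}$ — mean precisely that $x_{p_i^+}=x_{p_{i+1}^-}$ for each $i$, while $x_{p_1^-}=x_0=x$ and $x_{p_s^+}=x_n=y$. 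Hence the sum telescopes:
$$
f(x)-f(y)=\sum_{i=1}^{s}\bigl(f(x_{p_i^-})-f(x_{p_i^+})\bigr).
$$
Since each pair $(x_{p_i^-},x_{p_i^+})$ belongs to $E(X)$ by \eqref{eq:edgesmetric}, every summand is bounded in absolute value by $M$, so the triangle inequality gives $|f(x)-f(y)|\le sM\le d(x,y)\,M$. Dividing by $d(x,y)$ and maximizing over $x\neq y$ produces $L\le M$.

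The one point requiring care — and the only real obstacle — is the bookkeeping that underpins the telescoping: one must confirm that in normal form a minimal continuous path has $n\ge d(x,y)$ steps (so that $\mathcal P(x,y)$ is nonempty) and that the endpoints of the blocks of such a covering reproduce exactly the members of $E(X)$. Once this identification is established, both inequalities follow purely from the triangle inequality together with the definitions of $M$ and $d(X)$, mirroring the graph argument of Linial and Magen.
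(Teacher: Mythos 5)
Your proof is correct and follows essentially the same route as the paper's: both establish the left-hand inequality by taking a covering $p\in\mathcal P(x,y)$ along a minimal continuous path and telescoping $f(x)-f(y)$ over the $s$ blocks, your bound $|f(x)-f(y)|\le sM\le d(x,y)M$ being just a rephrasing of the paper's comparison of the maximal block increment $|f(p_k^+)-f(p_k^-)|$ with the average $\frac{1}{s}\sum_i|f(p_i^+)-f(p_i^-)|$. Your explicit verification of the easy inequality $M\le d(X)L$ and of the nonemptiness of $\mathcal P(x,y)$ (via unit steps in normal form, so $n\ge d(x,y)\ge s\ge 1$) simply fills in details the paper dismisses as trivial.
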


\begin{proof}
Let us prove only the first inequality, since the second will be trivial \emph{a posteriori}. Let $x,y\in X$ where the maximum in the left hand side is attained and let $x_0x_1\ldots x_{n-1}x_n$ be a continuous path of minimal length connecting $x$ with $y$. Let $p\in\mathcal P(x,y)$ and let $k$ be an integer such that
$$
|f(p_k^+)-f(p_k^-)|\geq|f(p_i^+)-f(p_i^-)|
$$
for all $i$. It follows
$$
|f(p_k^+)-f(p_k^-)|=\frac{s|f(p_k^+)-f(p_k^-)|}{s}\geq\frac{\sum_{i=1}^s|f(p_i^+)-f(p_i^-)|}{s}\geq
$$
Now we use the fact that the covering $p$ is made exactly by $s$ sets. It follows that $x$ and $y$ belong to the union of the $p_i$'s and we can use the triangle inequality and obtain
$$
\geq\frac{|f(x)-f(y)|}{s}\geq\frac{|f(x)-f(y)|}{d(x,y)}
$$
\end{proof}

The following result was already proved for finite connected graph in \cite{Jo-Va11}, Theorem 1 and Proposition 3.

\begin{theorem}\label{th:jolivale}
Let $(X,d)$ be a finite path-connected metric space in normal form. For all $1\leq p<\infty$, one has
\begin{align}
c_p(X)\geq\frac{D(X)}{2d(X)}\left(\frac{|X|}{|E(X)|\lambda_1^{(p)}}\right)^{\frac{1}{p}}
\end{align}
where
\begin{align}\label{eq:displacement}
D(X)=\max_{\alpha\in Sym(X)}\min_{x\in X}d(x,\alpha(x))
\end{align}

and

\begin{align}\label{eq:distortion}
c_p(X)=\inf\left\{||F||_{Lip}||F^{-1}||_{Lip}, F:X\rightarrow\ell^p(\mathbb N) \text{ bi-lipschitz embedding }\right\}
\end{align}
is the $\ell^p$-distortion of $X$.
\end{theorem}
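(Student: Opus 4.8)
The plan is to \emph{metrize} the Jolissaint--Valette argument for graphs, using the two metric analogues established above: Lemma \ref{lem:jolivalette} in place of Lemmas~1--2 of \cite{Jo-Va11}, and Lemma \ref{lem:linialmagen} in place of Linial--Magen's Claim 3.2. Since $c_p(X)$ is the infimum of $Dist(F)$ over all bi-lipschitz embeddings $F:X\to\ell^p(\mathbb N)$, it suffices to fix one such $F$ and bound $Dist(F)$ from below by the right-hand side. First I would use part (2) of Lemma \ref{lem:jolivalette} to replace $F$ by an embedding $G$ of \emph{exactly the same distortion} which is already centered, in the sense that
\[
\sum_{x\in X}||G(x)||_p^p\ \le\ \frac{1}{\lambda_1^{(p)}}\sum_{e\in E(X)}||G(e^+)-G(e^-)||_p^p .
\]
Writing $C_1\,d(x,y)\le||G(x)-G(y)||_p\le C_2\,d(x,y)$ for the optimal lower and upper Lipschitz constants of $G$, we have $Dist(G)=Dist(F)=C_2/C_1$, so it is enough to bound this ratio.

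The next step is the lower bound coming from the displacement. Let $\alpha^\ast\in Sym(X)$ attain the maximum in \eqref{eq:displacement}, so that $d(x,\alpha^\ast(x))\ge D(X)$ for \emph{every} $x\in X$. Applying the lower Lipschitz bound pointwise gives $||G(x)-G(\alpha^\ast(x))||_p\ge C_1\,D(X)$, whence
\[
\sum_{x\in X}||G(x)-G(\alpha^\ast(x))||_p^p\ \ge\ |X|\,C_1^p\,D(X)^p .
\]
Part (1) of Lemma \ref{lem:jolivalette}, applied to the permutation $\alpha^\ast$, bounds the very same sum from above by $2^p\sum_{x\in X}||G(x)||_p^p$, so the two estimates together force $\sum_{x}||G(x)||_p^p$ to be at least $|X|\,C_1^p\,D(X)^p/2^p$.

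For the matching upper bound I would feed the centered estimate back in through the edge energy, and this is where $d(X)$ enters. Each $e\in E(X)$ satisfies $d(e^-,e^+)\le d(X)$ by \eqref{eq:dis}, so the upper Lipschitz bound of $G$ gives $||G(e^+)-G(e^-)||_p\le C_2\,d(X)$; this comparison between the edge-differences of $G$ and its full metric Lipschitz constant is precisely the content of the metric Linial--Magen estimate (Lemma \ref{lem:linialmagen}), which in the pure graph case $d(X)=1$ is an equality and makes the factor disappear. Summing over the $|E(X)|$ edges and combining with the centered inequality yields $\sum_{x}||G(x)||_p^p\le |E(X)|\,C_2^p\,d(X)^p/\lambda_1^{(p)}$. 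Chaining this with the displacement lower bound produces
\[
|X|\,C_1^p\,D(X)^p\ \le\ 2^p\,\frac{|E(X)|\,C_2^p\,d(X)^p}{\lambda_1^{(p)}} .
\]
Rearranging, taking $p$-th roots, and recalling $Dist(G)=C_2/C_1$ gives the desired lower bound on $Dist(F)$; taking the infimum over all bi-lipschitz $F$ then yields the claimed inequality for $c_p(X)$.

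I expect the main difficulty to be conceptual rather than computational. Two of the three ingredients (Lemma \ref{lem:jolivalette}) are stated for $\ell^p$-valued maps while the third (Lemma \ref{lem:linialmagen}) is stated for real-valued functions, so one must check that they interlock consistently, and in particular that the passage from the metric Lipschitz constant $C_2$ to the edge-by-edge differences is controlled \emph{exactly} by the factor $d(X)$. This single adjustment, together with replacing the graph edge set by the metric edge set $E(X)$ of \eqref{eq:edgesmetric}, is the only genuine departure from the graph argument of \cite{Jo-Va11}, where $d(X)=1$ and $E(X)=E$ (Remark \ref{rem:metricedgesvsgraphedges}); everything else is a faithful transcription of their proof.
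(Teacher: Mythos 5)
Your proposal is correct and follows essentially the same route as the paper's proof: replace $F$ by the centered embedding $G$ from Lemma \ref{lem:jolivalette}(2), play the displacement permutation against Lemma \ref{lem:jolivalette}(1), and control the edge differences by $d(X)\,||G||_{Lip}$ via (the trivial direction of) Lemma \ref{lem:linialmagen} --- the only cosmetic difference being that the paper runs the chain for an arbitrary fixed-point-free permutation $\alpha$ with $\rho(\alpha)=\min_{x\in X}d(x,\alpha(x))$ and optimizes over $\alpha$ at the end, while you start directly from a maximizer $\alpha^{\ast}$ of \eqref{eq:displacement}. One caveat worth recording: your final chain, exactly like the paper's, actually yields $c_p(X)\geq\frac{D(X)}{2d(X)}\bigl(\frac{|X|\,\lambda_1^{(p)}}{|E(X)|}\bigr)^{1/p}$ with $\lambda_1^{(p)}$ in the \emph{numerator}, so the placement of $\lambda_1^{(p)}$ in the denominator of the displayed statement appears to be a typo in the theorem rather than a gap in your argument.
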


\begin{proof}
Let $G$ be a bi-lipschitz embedding which verifies the second condition in Lemma \ref{lem:jolivalette} and let $\alpha$ be a permutation of $X$ without fixed points. Let $\rho(\alpha)=\min_{x\in X}d(x,\alpha(x))$. One has
$$
\frac{1}{||G^{-1}||_{Lip}^p}=\min_{x\neq y}\frac{||G(x)-G(y)||_p^p}{d(x,y)^p}\leq\min_{x\in X}\frac{||G(x)-G(\alpha(x))||_p^p}{d(x,\alpha(x))^p}\leq
$$
$$
\leq\frac{1}{\rho(\alpha)^p}\min_{x\in X}||G(x)-G(\alpha(x))||_p^p\leq
$$
$$
\leq\frac{1}{\rho(\alpha)^p|X|}\sum_{x\in X}||G(x)-G(\alpha(x))||_p^p\leq
$$
Now apply the first statement of Lemma \ref{lem:jolivalette}:
$$
\leq\frac{2^p}{\rho(\alpha)^p|X|}\sum_{x\in X}||G(x)||_p^p\leq
$$
Now apply the second statement of Lemma \ref{lem:jolivalette}:
$$
\leq\frac{2^p}{\rho(\alpha)^p|X|\lambda_1^{(p)}}\sum_{e\in E(X)}||G(e^+)-G(e^-)||_p^p\leq
$$
$$
\leq \frac{2^p|E(X)|}{\rho(\alpha)^p|X|\lambda_1^{(p)}}\max_{e\in E(X)}||G(e^+)-G(e^-)||_p^p\leq
$$
Now apply Lemma \ref{lem:linialmagen}:
$$
\leq\frac{2^pd(X)^p|E(X)|||G||_{Lip}^p}{\rho(\alpha)^p|X|\lambda_1^{(p)}}
$$
Now recall the definition in Equations \ref{eq:distortion} and \ref{eq:displacement} and just re-arrange the terms to get the desired inequality.
\end{proof}

Notice that $d(X)\geq1$ and so the inequality gets worse when the metric space is not a graph. It suffices to write down some pictures to understand that it is likely that one can replace $d(X)$ with a smaller constant. For the moment we are not interested in finding the best inequality, but just in showing how to apply some notions that we have introduced in order to extend results from the setting of (locally) finite graphs to (locally) finite metric spaces. Analogously, now we present a version for general metric spaces of P.N.Jolissaint-Valette's inequality which is certainly not the best possible, because one can try to make it better studying how the different path-connected components are related among themselves.

\begin{corollary}\label{cor:jolivale}
Let $X$ be a metric space such that every path-connected component $X_i$ is finite. One has
\begin{align}
c_p(X)\geq\sup_i\frac{D(X_i)}{2d(X_i)}\left(\frac{|X_i|}{|E(X_i)|\lambda_1^{(p)}}\right)^{\frac{1}{p}}
\end{align}
\end{corollary}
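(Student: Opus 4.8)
The plan is to reduce everything to Theorem \ref{th:jolivale} applied component by component, the only genuine content being that passing to a path-connected component can never increase the distortion. First I would decompose $X$ into its path-connected components $\{X_i\}$ (finite by hypothesis, though there may be infinitely many) and fix an arbitrary bi-lipschitz embedding $F:X\rightarrow\ell^p(\mathbb N)$; if no such embedding exists then $c_p(X)=+\infty$ by convention and the inequality is vacuous, so I may assume one exists. For each index $i$ I consider the restriction $F|_{X_i}:X_i\rightarrow\ell^p(\mathbb N)$, which is again a bi-lipschitz embedding.

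Next I would control the Lipschitz data of the restriction by that of $F$. Since the supremum defining $||F|_{X_i}||_{Lip}$ ranges only over pairs inside $X_i$, which form a subset of the pairs inside $X$, one has
$$
||F|_{X_i}||_{Lip}=\sup_{\substack{x\neq y\\ x,y\in X_i}}\frac{d_p(F(x),F(y))}{d(x,y)}\leq\sup_{\substack{x\neq y\\ x,y\in X}}\frac{d_p(F(x),F(y))}{d(x,y)}=||F||_{Lip},
$$
and likewise $||(F|_{X_i})^{-1}||_{Lip}\leq||F^{-1}||_{Lip}$. Multiplying gives $Dist(F|_{X_i})\leq Dist(F)$, hence $c_p(X_i)\leq Dist(F|_{X_i})\leq Dist(F)$. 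As this holds for every embedding $F$, taking the infimum over $F$ yields $c_p(X_i)\leq c_p(X)$ for each $i$.

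Finally I would invoke Theorem \ref{th:jolivale} on each component. By Remark \ref{rem:normalform} the scale-invariant quantity $c_p(X_i)$ is unchanged if we rescale the metric of $X_i$ to normal form, and since we treat each component in isolation the rescalings may be chosen independently; the quantities $D(X_i),d(X_i),|E(X_i)|,\lambda_1^{(p)}$ appearing on the right are then understood as those of this normal form, exactly as produced by Theorem \ref{th:jolivale}, which gives
$$
c_p(X_i)\geq\frac{D(X_i)}{2d(X_i)}\left(\frac{|X_i|}{|E(X_i)|\lambda_1^{(p)}}\right)^{\frac{1}{p}}.
$$
Combining with $c_p(X)\geq c_p(X_i)$ and taking the supremum over $i$ yields the claim. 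There is no serious obstacle: the argument is a clean monotonicity-plus-supremum reduction. The one point needing care is precisely the componentwise normal-form bookkeeping, namely that independently renormalizing each $X_i$ is harmless because distortion is scale-invariant (Remark \ref{rem:normalform}), so that the right-hand side is legitimately read off from Theorem \ref{th:jolivale}.
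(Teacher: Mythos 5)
Your proposal is correct and is essentially the paper's own proof: the paper's entire argument is the observation that $c_p(X)\geq\sup_i c_p(X_i)$ for a partition into components, combined with Theorem \ref{th:jolivale} and Remark \ref{rem:normalform}, and you simply spell out the restriction-of-embeddings argument behind that monotonicity. Your explicit handling of the componentwise renormalization and the vacuous case $c_p(X)=+\infty$ is a fuller write-up of the same route, not a different one.
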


\begin{proof}
Just observe that if $X_i$ is a partition of $X$ then $c_p(X)\geq\sup_ic_p(X_i)$.
\end{proof}

\section{NPP-isomorphisms}\label{se:definitions}

In Sec.\ref{suse::discretecontinuity} we got very close to a new notion of isomorphism between two locally finite metric spaces: the notion of NPP-local-isomorphism is already a good notion, but it is \emph{too local} to capture global behaviors. In this section we introduce the notion of NPP-isomorphism, which is just a global version of NPP-local isomorphisms. If we were interested only in locally finite metric spaces, the definitions would have been much easier. In light of the property SN, that seems to be of interest for general metric spaces, in this section we work on general metric spaces.

\subsection{The discrete neighborhood of a bounded set}\label{suse:discrete}

In this first subsection we introduce the notion of \emph{discrete $k$-neighborhood} of a bounded set, as a generalization of the discrete 1-neighborhood of a singleton, introduced in Definition \ref{def:1discrete}. The basic idea is that we want a notion able to capture at the same time information about continuity and discreteness on the neighborhood of a bounded subset $A$ of $X$. We try do that introducing a notion that gets trivial when the metric space near $A$ is \emph{intuitively} continuous.

\begin{definition}\label{def:completechain}
Let $P\subseteq[0,\infty)$, with $0\in P$. A complete chain in $P$ is a finite subset $p_1,p_2,\ldots,p_n$ such that
\begin{enumerate}
\item $p_1=0$
\item $p_i<p_j$, for all $i<j$
\item if $p\in P$ is such that $p_i\leq p\leq p_{i+1}$, then either $p=p_i$, or $p=p_{i+1}$.
\end{enumerate}
\end{definition}

\begin{definition}\label{def:simpleboundary}
Let $(X,d)$ be any metric space, let $A$ be a bounded subset of $X$ and let $k$ be a nonnegative integer. The discrete $k$-neighborhood of $A$ is the set of elements $x\in X$ (resp. $x\in X\setminus A$) such that there is a finite sequence $x_0,x_1,\ldots,x_l=x$ in $X$ such that
\begin{enumerate}
\item $x_0\in A$
\item $l\leq k$
\item $d(x_0,x_0)<d(x_0,x_1)<d(x_0,x_2)<\ldots<d(x_0,x_l)$ is a complete chain in $P=\{d(y,A),y\in X\}$
\end{enumerate}
\end{definition}

Notice that there are no relations of inclusion between $dN_k$ and $cN_{k-1}$ (recall that $cN_k(A)=\{x\in X:d(x,A)\leq k\}$. For instance
\begin{itemize}
\item Take $A=(0,1)$ inside $X=\mathbb R$ equipped with the Euclidean metric. Then $cN_1(A)=(-1,2)$ and $dN_2(A)=[0,1]$.
\item Take $A=\{0\}$ inside $X=\{0,2\}$ equipped with the Euclidean metric. Then $cN_1(A)=\{0\}$ and $dN_2(A)=\{0,2\}$.
\end{itemize}
The first example, in particular, shows the meaning of our mysterious sentence: \emph{the discrete boundary is something that gets trivial when the space is intuitively continuous}. So let us define a notion of global continuity of a space which will be also useful in the sequel to give examples of spaces with property SN (see Proposition \ref{prop:example}).

\begin{definition}\label{def:discreteboundary}
The discrete $k$-boundary of a bounded subset $A$ of $X$ is $dB_k(A)=dN_k(A)\setminus A$.
\end{definition}

\begin{definition}\label{def:continuous}
A metric space $(X,d)$ is said to be \textbf{continuous} if for all open and bounded subsets $A$ of $X$ and for all $k\in\mathbb N$, one has $dB_k(A)=\overline A\setminus A$.
\end{definition}

\subsection{Boundary and NPP-isomorphisms}\label{suse:npp}

This subsection is devoted to two important definitions. The first is the definition of boundary that we are interested in. Since we want to capture at the same time continuity and discreteness, it is natural to use two terms, one looking at the discrete part, the other at the continuous part. Let us fix a piece of notation. Let $A\subseteq X$ and let $k\geq1$ be an integer. First of all, set $M=\max\{d(x,A),x\in dB_k(A)\}$ and then
$$
\alpha(k)=\left\{
            \begin{array}{ll}
              k, & \hbox{if $M=0$;} \\
              M, & \hbox{if $M\neq0$.}
            \end{array}
          \right.
$$
\begin{definition}\label{def:boundary}
Let $k\geq1$ be integer and $A\subseteq X$. The \textbf{$k$-boundary} of $A$ is the set
$$
B_k(A)=dB_k(A)\cup\bigcup_{0\leq\alpha\leq\alpha(k)-1}cB_\alpha(A)
$$
where $\alpha$ runs over the real numbers (not just over the integers).
\end{definition}

\begin{lemma}\label{lem:discrete}
If $(X,d)$ is locally finite and $A$ is finite and non empty, then
$$
B_k(A)=dB_k(A)
$$
\end{lemma}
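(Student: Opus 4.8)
The plan is to unwind the definition of the $k$-boundary $B_k(A)=dB_k(A)\cup\bigcup_{0\leq\alpha\leq\alpha(k)-1}cB_\alpha(A)$ and show that, under the hypotheses of local finiteness of $X$ and finiteness (and nonemptiness) of $A$, the continuous part of the union collapses entirely, leaving only $dB_k(A)$. First I would examine the quantity $M=\max\{d(x,A):x\in dB_k(A)\}$ and the associated constant $\alpha(k)$. The crucial observation is that since $A$ is finite and $X$ is locally finite, the set $P=\{d(y,A):y\in X\}$ is discrete near $0$ and, more importantly, the points of $dB_k(A)$ are obtained by following complete chains in $P$ of length at most $k$; hence $dB_k(A)$ is itself a finite set and $M$ is a genuine maximum attained at some point of $X\setminus A$ at positive distance from $A$ (positive because $A$ is finite, so $A$ is closed and every point of $X\setminus A$ has strictly positive distance to $A$). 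This forces $M\neq 0$, so $\alpha(k)=M>0$.

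Next I would analyze the continuous contribution $\bigcup_{0\leq\alpha\leq\alpha(k)-1}cB_\alpha(A)$. Recall from the notation fixed in the introduction that $cB_\alpha(A)=cN_\alpha(A)\setminus\overline{cN_\alpha(A)^\circ}$, where $cN_\alpha(A)=\{x:d(x,A)\leq\alpha\}$ and $cN_\alpha(A)^\circ=\{x:d(x,A)<\alpha\}$. The heart of the argument is that in a locally finite metric space with $A$ finite, the function $x\mapsto d(x,A)$ takes values in the discrete set $P$, so for each fixed $\alpha$ the "shell" $cB_\alpha(A)$ is controlled by the topology of the finite (bounded) pieces of $X$. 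I would argue that for every real $\alpha$ in the range $[0,\alpha(k)-1]$ the shell $cB_\alpha(A)$ is empty: the set $cN_\alpha(A)^\circ=\{x:d(x,A)<\alpha\}$ is a bounded subset of $X$, hence finite, hence closed, so $\overline{cN_\alpha(A)^\circ}=cN_\alpha(A)^\circ$; consequently $cB_\alpha(A)=\{x:d(x,A)\leq\alpha\}\setminus\{x:d(x,A)<\alpha\}=\{x:d(x,A)=\alpha\}$. Thus $cB_\alpha(A)$ is nonempty only when $\alpha\in P$.

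At this point the plan is to show that even the finitely many values $\alpha\in P\cap[0,\alpha(k)-1]$ for which $\{x:d(x,A)=\alpha\}\neq\emptyset$ already lie inside $dB_k(A)$, so that the union adds nothing new. Here I would use that any point $x$ with $d(x,A)=\alpha\leq\alpha(k)-1<M$ sits strictly closer to $A$ than the outermost points produced by the discrete neighborhood; since $dN_k(A)$ is built by following \emph{complete} chains in $P$, every point whose distance to $A$ is a value of $P$ not exceeding $M$ and reachable by a chain of length $\leq k$ already belongs to $dN_k(A)$, and hence to $dB_k(A)$ once we remove $A$ itself. I would make this precise by a short induction on the rank of $\alpha$ in the ordered discrete set $P$, showing that a complete chain $0=d(x_0,A)<d(x_1,A)<\cdots$ realizing distance $\alpha$ has at most $k$ steps because $\alpha<M=\alpha(k)$ and $M$ is realized by a length-$\leq k$ chain. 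Combining the two inclusions $dB_k(A)\subseteq B_k(A)$ (immediate from the definition) and $\bigcup_\alpha cB_\alpha(A)\subseteq dB_k(A)$ gives the claimed equality.

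The main obstacle I anticipate is the bookkeeping in the last step: relating the real-valued index $\alpha$ of the continuous shells to the combinatorial "chain length" notion underlying $dN_k$, and verifying carefully that the truncation at $\alpha(k)-1$ (rather than at $\alpha(k)$ or $M$) is exactly what guarantees every such shell is absorbed into the discrete boundary. In particular one must check the edge case where the chain realizing a shell distance has fewer than $k$ steps but is not yet complete in the sense of Definition \ref{def:completechain}; the local finiteness of $X$ together with the finiteness of $A$ is precisely what rules out pathologies here, since it makes $P$ a discrete, well-ordered (near $0$) subset of $[0,\infty)$, so complete chains exist and are finite. Everything else is routine unwinding of the closure and neighborhood definitions.
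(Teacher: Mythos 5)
Your proposal is correct, and it is in substance the rigorous version of the paper's proof, which consists of the single sentence that, by definition, ``there is no continuous contribution.'' Where you genuinely add something is in observing that this slogan is literally inaccurate: in a locally finite space the topology is discrete and every subset is closed, so $cB_\alpha(A)=\{x\in X: d(x,A)=\alpha\}$, which is \emph{nonempty} exactly when $\alpha\in P=\{d(y,A):y\in X\}$; the continuous part does not vanish outright but must be \emph{absorbed} into $dB_k(A)$, and your absorption step is the actual content the paper omits. That step is sound: since $M=\alpha(k)$ is realized by a point of $dB_k(A)$, hence by a complete chain of length at most $k$, and since a complete chain starting at $0$ necessarily passes through \emph{every} element of $P\cap[0,M]$ (condition (3) of Definition \ref{def:completechain}), any shell value $\alpha\in P$ with $0<\alpha\leq\alpha(k)-1<M$ occurs as an intermediate value of that chain; truncating there (initial segments of complete chains are complete, and intermediate witnesses exist because every element of $P$ is by definition an attained value) shows each point of the shell lies in $dN_k(A)$, hence in $dB_k(A)$ because its distance to $A$ is positive. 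Note that this uses the reading of Definition \ref{def:simpleboundary} in which the chain consists of the values $d(x_i,A)$ --- the reading the paper itself employs in the proof of Lemma \ref{lm:nojump2} --- and you correctly adopt it; under the other literal reading (distances $d(x_0,x_i)$ from a fixed base point) the absorption claim can fail. Two small loose ends you should make explicit: the shell at $\alpha=0$ equals $A$ itself and is excluded only because the paper's preamble defines $cN_\alpha$ and $cB_\alpha$ for $\alpha>0$ (your ``once we remove $A$'' aside should invoke this); and in the degenerate case $X=A$ the set $dB_k(A)$ is empty and $M$ is undefined, but then every $cB_\alpha(A)$ with $\alpha>0$ is empty as well, so both sides of the asserted equality are empty and the lemma holds vacuously.
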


\begin{proof}
By definition, there is no continuous contribution; namely, the continuous contribution reduces to the discrete distribution.
\end{proof}

Observe that this lemma would have been false, without introducing the parameter $\alpha(k)$, since otherwise there might have been some extra-contributions from the discrete part.
\\\\
The second definition is the notion of equivalence that is of our interest.

\begin{definition}\label{definition:nppdiscrete}
The metric space $(X,d_X)$ is NPP-embeddable (resp. isomorphic) into (resp. to) the metric space $(Y,d_Y)$ if and only if there is an injective (resp. bijective) map $f:X\rightarrow Y$ such that if (resp. and only if) $x_1,\ldots,x_n\in X$ give rise to a complete chain
$$
d_X(x_1,x_1)<d_X(x_1,x_2)<\ldots<d_X(x_1,x_n)
$$
inside $P=\{d_X(x_1,x), x\in X\}$, then $f(x_1),\ldots f(x_n)$ give rise to a complete chain
$$
d_Y(f(x_1),f(x_1))<\ldots<d_Y(f(x_1),f(x_n))
$$
inside $P=\{d_Y(f(x_1),y),y\in Y\}$
\end{definition}

This condition becomes particularly simple in the case of locally finite metric space, thanks to Lemma \ref{lem:discrete}.

\begin{proposition}\label{prop:nppdiscrete}
A bijective map between locally finite metric spaces is a NPP-isomorphisms if and only for all finite non-empty subset $A\subseteq X$, one has
$$
f(dN_k(A))=dN_k(f(A))
$$
\end{proposition}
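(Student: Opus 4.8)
The plan is to exploit two structural features at once: first, that Definition \ref{definition:nppdiscrete} is itself an ``if and only if'' condition, so that $f$ is an NPP-isomorphism precisely when both $f$ and $f^{-1}$ carry complete chains to complete chains; and second, that both sides of the asserted identity $f(dN_k(A))=dN_k(f(A))$ are built entirely out of complete chains. Because of the first feature it suffices, in each direction, to prove a single inclusion and then apply the same argument to $f^{-1}$ to obtain the reverse one. Throughout I would lean on Lemma \ref{lem:discrete}: since $X$ is locally finite and $A$ is finite and non-empty, the $k$-boundary $B_k(A)$ coincides with the purely discrete $dB_k(A)$, so there is no continuous contribution to account for and $P=\{d(y,A):y\in X\}$ is a discrete increasing sequence $0=\rho_0<\rho_1<\rho_2<\cdots$. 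This is exactly the simplification promised before the statement, and it is what confines the whole argument to the combinatorics of complete chains.

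For the forward implication I would assume $f$ is an NPP-isomorphism and take $x\in dN_k(A)$. By Definition \ref{def:simpleboundary} there are an anchor $x_0\in A$ and a sequence $x_0,x_1,\dots,x_l=x$ with $l\le k$ realizing a complete chain of successive distance values. I would then apply $f$ to this sequence: the NPP condition transports the complete chain to a complete chain, the anchor satisfies $f(x_0)\in f(A)$, and the length $l\le k$ is unchanged, so $f(x)\in dN_k(f(A))$. This yields $f(dN_k(A))\subseteq dN_k(f(A))$, and running the identical argument for the NPP-isomorphism $f^{-1}$ gives the opposite inclusion, hence equality.

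For the converse I would first specialize the hypothesis to singletons. For $A=\{a\}$ the poset $P$ reduces to $\{d(a,y):y\in X\}$, and $dN_k(\{a\})$ is exactly the union of the first $k$ distance-shells about $a$; taking successive set-differences $dN_k(\{a\})\setminus dN_{k-1}(\{a\})$ recovers the individual shells. The assumption $f(dN_k(\{a\}))=dN_k(\{f(a)\})$ for every $k$ then says that $f$ carries the $i$-th shell about $a$ bijectively onto the $i$-th shell about $f(a)$, in order. This is precisely the content of Definition \ref{definition:nppdiscrete}: a sequence $x_1,\dots,x_n$ forms a complete chain based at $x_1$ if and only if each $x_i$ lies in the $(i-1)$-st shell about $x_1$, a condition that is now both preserved and reflected by $f$. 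Hence $f$ is an NPP-isomorphism.

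The step I expect to be the main obstacle is the bridge between the set-based definition of $dN_k(A)$ and the single-base-point chains of Definition \ref{definition:nppdiscrete}: completeness in $dN_k(A)$ is measured against the distance-to-$A$ poset $\{d(y,A):y\in X\}$, whereas the NPP condition only directly controls chains anchored at one point. The way I would close this gap is to use local finiteness to write a complete chain relative to $A$ as a concatenation of atomic one-step passages between consecutive shells, each anchored at a point of $A$, and to verify that the anchor can always be chosen inside $A$ so that its image stays in $f(A)$ and the shell index is preserved. The reflecting half of the ``if and only if'' in Definition \ref{definition:nppdiscrete} is exactly what guarantees that no shell is created or destroyed under $f$, which is the crucial point, and Lemma \ref{lem:discrete} is what makes this purely discrete bookkeeping legitimate by killing any continuous contribution.
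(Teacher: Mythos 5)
Your converse direction (specializing the hypothesis to singletons, recovering the shells as $dN_k(\{a\})\setminus dN_{k-1}(\{a\})$, and translating shell preservation into preservation and reflection of complete chains) is correct, and for what it is worth the paper offers no proof at all of this proposition, stating it as immediate from Lemma \ref{lem:discrete}. The genuine gap is exactly the step you flagged in your last paragraph, and it cannot be closed the way you propose. Your plan is to decompose a chain that is complete relative to $P=\{d(y,A):y\in X\}$ into one-step passages between consecutive $A$-shells, each anchored at a point of $A$. But the $A$-shell index of a point is the rank of $d(x,A)=\min_{a\in A}d(x,a)$ inside $P$, and this involves comparing distances measured from \emph{different} anchors; Definition \ref{definition:nppdiscrete} only constrains chains measured from a single anchor, and an NPP-isomorphism need not preserve cross-anchor comparisons. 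Concretely: take $X=\{0,1,100,102\}$ and $Y=\{0,2,100,101\}$ with the Euclidean metric, and $f(0)=0$, $f(1)=2$, $f(100)=100$, $f(102)=101$. From each of the four anchors the rank order of the other three points is preserved (from $0$: $1,100,102$ maps to $2,100,101$; from $1$: $0,100,102$ maps to $0,100,101$; from $100$: $102,1,0$ maps to $101,2,0$; from $102$: $100,1,0$ maps to $100,2,0$), and all anchored distances are pairwise distinct, so complete chains are preserved and reflected: $f$ is an NPP-isomorphism. Yet for $A=\{0,100\}$ one has $d(1,A)=1<2=d(102,A)$ while $d(f(1),f(A))=2>1=d(f(102),f(A))$; the first $A$-shell is $\{1\}$ but the first $f(A)$-shell is $\{f(102)\}$, so $f(dN_1(A))=\{0,100,2\}\neq\{0,100,101\}=dN_1(f(A))$. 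Hence under the set-based reading of completeness in Definition \ref{def:simpleboundary}, the forward implication you are trying to prove is simply false, and no bookkeeping of anchored one-step passages can rescue it. (Also, a small point: the discreteness of $P$ comes directly from local finiteness, not from Lemma \ref{lem:discrete}, which only says $B_k(A)=dB_k(A)$.)

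What your obstacle actually reveals is an ambiguity in the paper: the proposition is true only if one reads Definition \ref{def:simpleboundary} with the chain $d(x_0,x_0)<d(x_0,x_1)<\cdots<d(x_0,x_l)$ complete in $\{d(x_0,y):y\in X\}$, i.e. $dN_k(A)=\bigcup_{a\in A}dN_k(\{a\})$, rather than complete in $\{d(y,A):y\in X\}$ (note the proof of Lemma \ref{lm:nojump2} uses the set-based chains $d(x_i,A)$, so the two readings genuinely diverge in the paper). Under the anchored reading your bridge is unnecessary: a defining sequence for $x\in dN_k(A)$ is already a complete chain anchored at the single point $x_0\in A$, Definition \ref{definition:nppdiscrete} transports it directly with $f(x_0)\in f(A)$ and the length unchanged, and the reverse inclusion follows by applying the same argument to $f^{-1}$, which is an NPP map by the reflecting half of the definition. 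So the correct repair is to adopt the anchored reading explicitly and delete your third paragraph, at which point your first two paragraphs constitute a complete proof; if you insist on the set-based reading, you should instead record the counterexample above.
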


This proposition makes clear why this notion is the global version of NPP-local-isomorphisms.

\subsection{Examples of NPP-embeddings}\label{se:examples}

We have just introduced \emph{another} way to embed a metric space into another metric space. Indeed, the theory of embedding a (locally finite) metric space into a (possibly well understood) metric space has a long and interesting story over the 20th century and still alive. At the very beginning of the story, people were interested in isometric embeddings, but at some point it became clear that it is very rare to find an isometry between two metric spaces of interest. At that point people got interested in various approximate embeddings: quasi-isometric embeddings, bi-lipschitz embeddings, coarse embeddings, uniform embeddings and so on. In particular, the theory became very popular after two breakthrough papers, which showed unexpected correlations with other branches of Mathematics or, even, other sciences: Linial, London and Rabinovich found a link between this theory and some problems in theoretical computer science (see \cite{Li-Lo-Ra95}); Guoliang Yu showed a link between this theory, geometric group theory and K-theory of $C^*$-algebras (see \cite{Yu00}). The common paradigm of all these embeddings is to forget, more or less brutely, the local structure of the metric space in order to try to understand the global structure. As we said at the very beginning of this paper, because of our motivating problems, we are interested in the local structure of a metric space and this is why we cannot use one of those embeddings.

\begin{remark}
Let us observe explicitly that the notion of NPP-embedding is different from all other notions used to study metric spaces.

\begin{itemize}
\item It is clear that an isometry is also an NPP-embedding. The converse does not hold: consider, inside the real plane with the Euclidean metric, the subspace $X=(0,1)\cup\{(n,0),n\in\mathbb N, n\geq2\}$. It is clear that $X$ is NPP-isomorphic to $Y=\{(n,0), n\in\mathbb N, n\neq1\}$, but there is no isometry from $X$ to any subspace of $\mathbb Z$ (simply because of the irrational distances).
\item Take two parallel copies of $\mathbb Z$ inside the Euclidean plane, for instance: $X=\{(0,x),x\in\mathbb Z\}\cup\{(1,x), x\in\mathbb Z\}$. Then $X$ is quasi-isometric\footnote{Recall that two metric spaces $(X,d_X)$ and $(Y,d_Y)$ are called quasi-isometric if there is a map $f:X\rightarrow Y$ and two constants $\lambda\geq1$ and $C\geq0$ such that
\begin{enumerate}
\item $\lambda^{-1}d(x_1,x_2)-C\leq d(f(x_1),f(x_2))\leq\lambda d(x_1,x_2)+C$, for all $x_1,x_2\in X$
\item $d(y,f(X))\leq C$, for all $y\in Y$
\end{enumerate}
} and coarsely equivalent\footnote{Let $(X,d_X)$  and $(Y,d_Y)$ be two metric spaces. A map $f:X\rightarrow Y$ is called \textbf{coarse embedding} if there are two non-decreasing functions $\rho_\pm:\mathbb R_+\rightarrow\mathbb R_+$ such that
$$
\rho_-(d_X(x,y))\leq d_Y(f(x),f(y))\leq\rho_+(d_X(x,y))
$$
The spaces $X$ and $Y$ are called \textbf{coarsely equivalent} if there are two coarse embeddings $f:X\rightarrow Y$ and $g:Y\rightarrow X$ and a constant $C\geq0$ such that
\begin{enumerate}
\item $d_X(x,g(f(x)))\leq C$, for all $x\in X$
\item $d_Y(y,f(g(y)))\leq C$, for all $y\in Y$
\end{enumerate}
}
to a single copy of $\mathbb Z$. But it is clear that there are no NPP-isomorphisms between $X$ and a single copy of $\mathbb Z$. Indeed, the discrete 1-boundary of a singleton in $X$ contains three points and the discrete 1-boundary of a singleton in $\mathbb Z$ contains two points.
\end{itemize}

\end{remark}

Now we want to propose a general example of NPP-isomorphism, which generalizes the situation when $(X,d)$ is a connected graph equipped with the shortest-path distance.
Indeed, given a metric space, we can always represent it as a graph, joining any pair of points $(x,y)$ with an edge labeled with the number $d(x,y)$. This sounds particularly good if the space is a connected graph, because $d(x,y)\in\mathbb N$, but in general we are not allowed to do such a construction, since the nature of the problem exclude the use of $d(x,y)$ (see the problems discussed in Sec. \ref{suse:otherapplications}). so the aim of what follows is to find conditions that guarantee the existence of a canonical NPP-isomorphism between a locally finite metric space $(X,d)$ and a TS-graph\footnote{A weighted graph $X=(V,V^2)$ (then any pair of vertex is joined by an edge) is called \textbf{traveling salesman graph (TS graph)}, if any edge $\{x,y\}$ is labeled by a non-negative integer $d(x,y)$, which gives rise to a metric on $V$.} whose distances are natural numbers.

The proof is based on the first intuitive procedure to make a map: start from a base point $x_0\in X$, look at the nearest points (i.e., look at $dN_1(\{x_0\})$) and draw a point for each of them. We obtain a symbolic representation of some region $R_1$ of $X$. Now, increase the knowledge about the space looking at $dN_2(\{x_0\})$ and draw a point for any new point that we find; and so on. Unfortunately, without some assumptions, this procedure faces some difficulties. The simplest assumptions we can find says basically that the discrete neighborhoods behave like the balls. It is likely that one can find more general results, but, at the same time, it is likely that in general there is no NPP-isomorphism between a locally finite metric space and a TS-graph whose distances are natural numbers.

\begin{definition}\label{def:graphtype}
A locally finite metric space is called \textbf{graph-type} if
\begin{enumerate}
\item $$x\in dB_k(y)\setminus dB_{k-1}(y) \Leftrightarrow y\in dB_k(x)\setminus dB_{k-1}(x)$$
\item $$x\in dN_h(y) \text{ and } z\in dN_k(y) \text{ , imply } x\in dN_{h+k}(z)$$
\end{enumerate}
\end{definition}

\begin{theorem}\label{th:NPPisomorphism1}
Any graph-type space $X$ is NPP-isomorphic to a canonical TS-graph, which is called \textbf{symbolic graph of $X$}.
\end{theorem}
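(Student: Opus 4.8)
The plan is to build the symbolic graph explicitly and then show that the identity map realizes the required equivalence. On the vertex set $V=X$ I define a weight function $\delta$ by $\delta(x,x)=0$ and, for $x\neq y$, $\delta(x,y)=k$, where $k\geq 1$ is the unique integer with $x\in dB_k(y)\setminus dB_{k-1}(y)$. First I would check this is well defined. Since $X$ is locally finite, the set $P=\{d(y,z):z\in X\}$ meets every bounded interval in a finite set, hence is well ordered, say $0=r_0<r_1<r_2<\cdots$; the complete chains issuing from $y$ are then exactly the initial segments $r_0<r_1<\cdots<r_k$, so the ``$k$-th shell'' $dB_k(y)\setminus dB_{k-1}(y)$ is precisely $\{z:d(y,z)=r_k\}$. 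Every $z$ lies in exactly one shell, so $\delta(x,y)$ is defined for all pairs, and the shells are nonempty for all consecutive $k$ up to the number of distinct distances from $y$.

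Next I would verify that $\delta$ is an integer-valued metric, so that $(V,\delta)$ is a TS-graph. Integrality and $\delta(x,y)=0\Leftrightarrow x=y$ are immediate. Symmetry $\delta(x,y)=\delta(y,x)$ is exactly the first graph-type axiom. For the triangle inequality, put $h=\delta(x,y)$ and $k=\delta(y,z)$; then $x\in dN_h(y)$ and, using symmetry, $z\in dN_k(y)$, so the second graph-type axiom yields $x\in dN_{h+k}(z)$, i.e. $\delta(x,z)\leq h+k=\delta(x,y)+\delta(y,z)$. This is the one step where I expect to have to be careful with indices: matching the hypotheses ``$x\in dN_h(y)$, $z\in dN_k(y)$'' of the axiom to the shells defining $\delta$, and remembering that membership in $dN_m(z)$ yields only $\delta(x,z)\leq m$.

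Finally I would show the identity map $\mathrm{id}:(X,d)\to(V,\delta)$ is an NPP-isomorphism by checking Definition \ref{definition:nppdiscrete} directly, which, being based at a single point $x_1$, reduces everything to the shell analysis above. Fix a base point $b=x_1$. By construction $\delta(b,\cdot)$ takes exactly the consecutive values $0,1,2,\dots$ on $X$, so the distance set $\{\delta(b,z):z\in X\}$ is an initial segment of $\mathbb N$ with no gaps; consequently a tuple $x_1,\dots,x_n$ is a complete chain for $\delta$ based at $b$ iff $\delta(b,x_j)=j-1$ for each $j$, i.e. iff $x_j$ lies in the $(j-1)$-th $d$-shell of $b$. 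On the other hand, by the enumeration $r_0<r_1<\cdots$ of $P$, the same tuple is a complete chain for $d$ based at $b$ iff $d(b,x_j)=r_{j-1}$, i.e. again iff $x_j$ lies in the $(j-1)$-th $d$-shell of $b$. The two conditions coincide, so complete chains are preserved and reflected; since $\mathrm{id}$ is a bijection, it is an NPP-isomorphism and $(V,\delta)$ is the desired symbolic graph. The only genuinely delicate point is the no-gaps observation for $\delta$, which is precisely what makes completeness transfer in both directions.
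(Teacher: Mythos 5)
Your proposal is correct and follows essentially the same route as the paper: the label $\delta(x,y)$ you define directly is exactly the paper's $\ell(x,y)$ (the unique $h$ with $y\in dB_h(x)\setminus dB_{h-1}(x)$, obtained there via an iterative base-point construction), the two graph-type axioms are used in the same way for symmetry and the triangle inequality, and the final complete-chain correspondence via shells is the paper's concluding argument. Your write-up is in fact more careful than the paper's at the points it leaves implicit (well-definedness via the well-ordering of the distance set, and the no-gaps property of $\delta(b,\cdot)$ that makes completeness transfer in both directions), so nothing is missing.
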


\begin{proof}

Fix a base point\footnote{It will be clear at the end of the construction that it does not depend on the base point (it will follow by the definition of graph-type metric spaces).} $x_0\in X$ and construct a weighted graph as follows:

\begin{itemize}
\item For any point $y\in dB_1(x_0)$, draw an edge $\{x,y\}$ labeled with the number 1.
\item
\begin{itemize}
\item For any point $x\in dB_1(x_0)$, consider any $y\in dB_1(x)$. If there is no edge joining $x$ and $y$, draw an edge $\{x,y\}$ labeled with the number $1$.
\item For any point $x\in dB_1(x_0)$, join $x$ to any point $y\in dB_2(x)$ and label this edge with the number 2.
\end{itemize}
\item
\begin{itemize}
\item For any point $x\in dB_2(x_0)$, consider any $y\in dB_1(x)$. If there is no edge joining $x$ and $y$, draw an edge $\{x,y\}$ labeled with the number $1$.
\item For any point $x\in dB_2(x_0)$, consider any $y\in dB_2(x)$. If there is no edge joining $x$ and $y$, draw an edge $\{x,y\}$ labeled with the number $2$.
\item For any point $x\in dB_2(x_0)$, join $x$ to any point $y\in dB_3(x)$, with an edge labeled with the number $3$.
\end{itemize}
\item and so on.
\end{itemize}
It is clear that this construction produces a graph with vertex set $X$. Let us denote by $Symb(X)$ this graph. We have to check that the labels give rise to a metric on $Symb(X)$, that we denote by $Symb(d)$. Obviously, this metric is defined as follows:
$$
Symb(d)(x,y)=\left\{
          \begin{array}{ll}
            0, & \hbox{if $x=y$;} \\
            \ell(x,y), & \hbox{otherwise.}
          \end{array}
        \right.
$$
where $\ell(x,y)$ is the label of the (unique by construction) edge joining $x$ and $y$. The construction implies that $\ell(x,y)$ is the unique integer $h\geq1$ such that $y\in dB_h(x)\setminus dB_{h-1}(x)$. From the properties in Definition \ref{def:graphtype}, immediately follows that the symbolic metric is indeed a metric on the symbolic graph. It remains to prove that the identity map is an NPP-isomorphism between the two metrics. Let $x_0,x_1,\ldots,x_n\in X$ giving rise to a complete chain
$$
d(x_0,x_0)<d(x_0,x_1)<\ldots
$$
in $P=\{d(x_0,x),x\in X)$. It follows that $x_j\in dB_j(x_0)\setminus dB_{j-1}(x_0)$, for all $j=1,\ldots,n$. Hence $Symb(d)(x_0,x_j)=j$ which is a complete chain in $\{Symb(d)(x_0,x),x\in X\}$. Analogously, one gets the converse.
\end{proof}

Theorem \ref{th:NPPisomorphism1} is, in some sense, a starting theorem. It uses some natural conditions in order to guarantee the (almost) best result: to make a symbolic map using only integer numbers. In Sec. \ref{suse:otherapplications} we have already discussed that there are basically two different ways to develop this result: relax the hypothesis in order to have a symbolic map which uses only rational numbers with a fixed number of digits; strengthen/change the hypothesis in order to have a \emph{finite-dimensional} symbolic map. The point that makes the problem sometimes difficult is that, in the real-life problems, one needs both the condition.

\section{The isoperimetric constant of a locally finite space}\label{se:isoperimetric}

Now that we have a new notion of isomorphism between locally finite metric spaces, we want to construct some invariants. We already have an NPP-invariant; namely the fundamental group, which comes from the \emph{continuous world of manifolds thanks to a procedure of discretization of continuity}. In this section we want to introduce another invariant, which comes from the \emph{very discrete world of connected graphs thanks to a procedure of continuization of discreteness}. The main application of this invariant is to lead to the discovery of the property SN, which is the first known purely metric property that reduces to amenability for finitely generated groups.

\subsection{Definition of the isoperimetric constant}\label{suse:defiso}

Let $X=(V,E)$ be an infinite, locally finite, connected graph. There are many different definitions of isoperimetric constant. We will start from the one that seems to us easier to be generalized. We will define locals isoperimetric constants $\iota_k$ and a global isoperimetric constant $\iota$. We will observe that the definition of $\iota$ would be the same, starting from one of the other definitions of isoperimetric constant accessible in literature.
\\\\
The following definition is probably due to McMullen (see \cite{McMu89}) and it also appears in \cite{Be-Sc97}, in \cite{Ce-Gr-Ha99} and \cite{El-So05}.

\begin{definition}\label{def:isoperimetricgraph}
Let $X=(V,E)$ be an infinite, locally finite, connected graph and let $V$ equipped with the \emph{shortest path} metric. The isoperimetric constant of $X$ is
\begin{align}\label{eq:isoperimetric}
\iota_1(X)=\inf\left\{\frac{|dB_1(A)|}{|A|}, A\subseteq V \text{ is finite and non-empty}\right\}
\end{align}
\end{definition}

Generalizing Definition \ref{def:isoperimetricgraph}, Elek and S\`{o}s proposed in \cite{El-So05} the following

\begin{definition}\label{def:kisograph}
Let $X=(V,E)$ be an infinite, locally finite, connected graph and let $V$ equipped with the \emph{shortest path} metric. For any integer $k\geq1$, define the $k$-isoperimetric constant of $X$ to be
\begin{align}\label{eq:kisoperimetric}
\iota_k(X)=\inf\left\{\frac{|dB_k(A)|}{|A|}, A\subseteq V \text{ is finite and non-empty}\right\}
\end{align}
\end{definition}

An observation is needed at this point. It is clear that these definitions were different, since they did not know the discrete boundary. Indeed they used $cN_k(A)\setminus A$ instead of $dN_k(A)$, which is the same, when the graph is connected and the metric is the shortest path metric (for a proof, use Lemma \ref{lm:nojump2}, which holds also for locally finite connected graphs). Seen in this way, these definitions do not use the graph structure, but only the metric structure. So, theoretically, we could have used them as definitions even in our case. The point is that they would not have been an NPP-invariant. Since NPP isomorphisms of locally finite metric space preserves the discrete boundary, the trick is indeed to use it.

\begin{definition}\label{def:isoperimetricspace}
Let $(X,d)$ be a locally finite metric space. The \textbf{$k$-isoperimetric constant of $X$} is
\begin{align}\label{eq:isoperimetric2}
\iota_k(X)=\inf\left\{\frac{|dB_k(A)|}{|A|}, A\subseteq X \text{ finite and non-empty}\right\}
\end{align}
\end{definition}

Notice that the notion of $k$-isoperimetric constant is by definition a local notion, since it looks at the discrete $k$-neighborhood of the sets. Sometimes, it is better to have a global notion.

\begin{definition}\label{def:globaliso}
Let $(X,d)$ be a locally finite metric space. The \textbf{isoperimetric constant of $X$} is
\begin{align}\label{eq:isoglobal}
\iota(X)=\sup_{k\geq0}\inf\left\{\frac{|dB_k(A)|}{|A|}, A\subseteq X \text{ finite and non-empty}\right\}
\end{align}
\end{definition}

The following result is now obvious \emph{by definition} (and by Proposition \ref{prop:nppdiscrete}), but we state it as a theorem because of the intrinsic importance to have an invariant.

\begin{theorem}\label{th:isoinvariant}
The $k$-isoperimetric constant $\iota_k$ is an NPP-invariant. Consequently, also the isoperimetric constant $\iota$ is an NPP-invariant.
\end{theorem}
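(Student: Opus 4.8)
The plan is to reduce Theorem \ref{th:isoinvariant} to the already-established fact that an NPP-isomorphism preserves discrete $k$-neighborhoods, namely Proposition \ref{prop:nppdiscrete}. The quantities $\iota_k$ and $\iota$ are both built out of the cardinalities $|dB_k(A)|$ and $|A|$ as $A$ ranges over finite non-empty subsets, so the whole content is to show that an NPP-isomorphism $f:X\to Y$ sets up a cardinality-preserving correspondence between these data on $X$ and on $Y$.

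First I would fix an NPP-isomorphism $f:X\to Y$ between locally finite metric spaces and recall that $dB_k(A)=dN_k(A)\setminus A$ by Definition \ref{def:discreteboundary}. By Proposition \ref{prop:nppdiscrete}, for every finite non-empty $A\subseteq X$ one has $f(dN_k(A))=dN_k(f(A))$. Since $f$ is a bijection it respects set difference, so
\begin{equation}
f(dB_k(A))=f\bigl(dN_k(A)\setminus A\bigr)=dN_k(f(A))\setminus f(A)=dB_k(f(A)).
\end{equation}
Because $f$ is injective it preserves cardinality, giving $|dB_k(A)|=|dB_k(f(A))|$ and trivially $|A|=|f(A)|$. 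Hence the ratio $|dB_k(A)|/|A|$ equals $|dB_k(f(A))|/|f(A)|$ for every finite non-empty $A$.

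Next I would check that $A\mapsto f(A)$ is a bijection from the finite non-empty subsets of $X$ onto the finite non-empty subsets of $Y$; this is immediate since $f$ is a bijection and $f^{-1}$ is again an NPP-isomorphism, so it carries finite non-empty sets to finite non-empty sets in both directions. Consequently the two families of ratios $\{|dB_k(A)|/|A|\}$ over $A\subseteq X$ and $\{|dB_k(B)|/|B|\}$ over $B\subseteq Y$ coincide as sets of real numbers. Taking the infimum over each family yields $\iota_k(X)=\iota_k(Y)$, which is the first assertion. For the second assertion I would then take the supremum over $k\geq 0$ of the equal quantities $\iota_k(X)=\iota_k(Y)$, obtaining $\iota(X)=\iota(Y)$ directly from Definition \ref{def:globaliso}.

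Honestly, there is no serious obstacle here: the theorem is, as the authors say, essentially true by definition once Proposition \ref{prop:nppdiscrete} is in hand. The only point requiring a moment's care is the bookkeeping that an NPP-isomorphism and its inverse both map the indexing set of finite non-empty subsets onto itself, so that the infima (and then the suprema) are genuinely taken over matching families rather than over one inclusion; invoking that $f^{-1}$ is itself an NPP-isomorphism handles this cleanly and symmetrically.
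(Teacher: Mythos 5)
Your proposal is correct and follows exactly the route the paper intends: the paper declares the theorem ``obvious by definition'' together with Proposition \ref{prop:nppdiscrete}, and your write-up is simply that argument spelled out --- $f(dN_k(A))=dN_k(f(A))$ forces $f(dB_k(A))=dB_k(f(A))$, bijectivity preserves all the cardinalities and matches the indexing families of finite non-empty subsets, so the infima defining $\iota_k$ and then the supremum defining $\iota$ coincide. Nothing is missing; your only added care (that $f^{-1}$ is itself an NPP-isomorphism, so the correspondence of subsets is onto in both directions) is exactly the bookkeeping the paper leaves implicit.
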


\begin{remark}
As we told at the beginning of this section, there are other notions of isoperimetric constant. Using these other notions, probably we would have obtained other symbolic $k$-isoperimetric constant, but the same global symbolic isoperimetric constant. Indeed all these definitions differ from each other locally.
\begin{itemize}
\item The definition that we have used makes a comparison between $A$ and $d_1(A)$, namely, in the language of Graph Theory, the set of vertices outside $A$ which are connected by an edge to some element of $A$.
\item Another definition, as in \cite{Do84} and \cite{Co-Sa93}, makes use of the vertices inside $A$.
\item Ornstein-Weiss' definition makes use both of the vertices both inside and outside $A$ (see \cite{Or-We87})
\end{itemize}
All these differences vanish making use of the parameter $k$.
\end{remark}

Now, it is well-known by a result of Ceccherini-Silberstein, Grigorchuk and de la Harpe (for finitely generated group) and then by a result of Elek and S\'{o}s (for locally finite graphs), that the condition $\iota(X)=0$ is equivalent to the notion of amenability. The natural question is then: \emph{What metric property corresponds to have $\iota(X)=0$, where $X$ is a locally finite metric space?} This question leads to the first known purely metric property which reduces to amenability for finitely generated groups. We will discuss this property, called property SN, in the next section.

\subsection{Some observation about the Markov operator and the Laplacian}\label{suse:laplacian}

At this point one is tempted to define the Markov operator, the Laplacian, the Gradient and so on in such a way to have some isoperimetric inequalities.

Let us recall the definition of the Markov operator for a connected graph of bounded degree equipped with the shortest path distance. Let $\mathcal H$ be the Hilbert space of functions $h:X\rightarrow\mathbb C$ such that $\sum_{x\in X}deg(x)|h(x)|^2<\infty$.

\begin{definition}
The Markov operator is the self-adjoint and bounded operator $T$ on $\mathcal H$ defined by
\begin{equation}\label{eq:markov}
(Th)(x)=\frac{1}{deg(x)}\sum_{y\in B_1(x)}h(y)
\end{equation}
where $B_1(x)=\{y\in X:d(x,y)=1\}$.
\end{definition}

The point is that it is not clear how to replace $deg(x)$ and $B_1(x)$, in order to have some isoperimetric inequalities. Indeed there is some ambiguity due to the following fact: the isoperimetric constant looks, in some sense, outside the sets. Namely, given a finite set $F$, it looks at the behavior of $\frac{|dB_1(F)|}{|F|}$. But the points in $dB_1(F)$ are not the nearest points to some point in $F$, but they are the points that are nearest to $F$ when an observer sitting on $F$ looks outside $F$. Now, the ambiguity should be clear. In order to have a Markov operator which agrees with the isoperimetric constant, one should replace $B_1(x)$ with some set which takes into account also the nearest points \emph{outside}... outside what? Our idea is that in this case one should fix a base point $x_0\in X$, consider the increasing and covering sequence of sets $\{x_0\}\subseteq dB_1(\{x_0\})\subseteq dB_2(\{x_0\})\subseteq\ldots$ and define the Markov operator $T_{x_0}$, replacing $B_1(x)$ with the union between $dB_1(\{x\})$ and the set of nearest point to $x$ outside the first $dB_k(\{x_0\})$ containing $x$. This procedure seems to have a surprising philosophical affinity with Tessera's viewpoints technicality used to define analogous notions in any metric space (see \cite{Te08}). Unfortunately, we cannot adopt word by word Tessera's technique, that use classical balls, if we want to keep some relation with the isoperimetric constant, which use balls in the sense of discrete boundary. Indeed, as observed more or less explicitly many times, the discrete boundary of a set can be locally very different from the classical boundary defined using the balls. In many cases, it is well possible that the two boundaries differ very little at large scale. In this case it is expected that, \emph{at large scale}, \emph{our} Markov operator, \emph{our} Laplacian and \emph{our} gradient should be almost the same as the ones defined by Tessera. For instance, it is philosophically likely that the large scale Sobolev's inequalities (see \cite{Co96} and \cite{Te08}) are the same. This is of course an interesting field of research.

\section{The Small Neighborhood property}\label{se:sn}

In this section we construct a purely metric property (meaning that the definition can be given for any metric space) that corresponds, for locally finite metric spaces, to have isoperimetric constant equal to zero. This property will be called Small Neighborhood property (property SN). Indeed, the geometric intuition behind this property is that \emph{certain} big sets have \emph{negligible boundary}. This is the same intuition used to describe the amenability of a finitely generated group, through the celebrated F{\o}lner condition (see \cite{Fo55}). We will see, in fact, that there is a precise correspondence between property SN and amenability: the two are equivalent for Cayley graphs of finitely generated groups. Hence, as far as we know, the property SN turns out to be the first known purely metric description of amenability, solving, in part\footnote{\emph{In part} means that it is not clear what happens for non finitely generated groups.}, the longstanding question whether or not there is a purely metric description of amenability.

\subsection{Definition of Small Neighborhood property}\label{suse:defsn}

As already said, the idea behind the property SN is quite intuitive: we want that \emph{certain} subsets of a metric space have \emph{negligible boundary}. Trying to formalize this property, one faces some troubles. Indeed

\begin{itemize}
\item It is not clear which \emph{certain sets} we should consider. Of course not any. Indeed, whatever this property is, we want that the Euclidean metric on the real line has this property. So we need to exclude those sets, as the set of  rational numbers, that are \emph{intrinsically too big}.
\item We want a notion able to describe the amenability of a f.g. group; namely, we want that a f.g. group is amenable if and only if its Cayley graph has the property SN. Now the Cayley graph of a f.g. group is always a locally finite metric space and then the (topological) boundary of a bounded set is always empty, hence negligible in every sense. This means that we cannot adopt the usual definition of boundary, otherwise every (Cayley graph of a) f.g. group would have the property SN, against our aim.
\item A general metric space does not have a natural measure to define the notion of negligible set.
\end{itemize}

Fortunately, we are helped by the work done in the previous sections: we have a notion of boundary able to take in account, at the same time, discrete and continuous behaviors; on the other hand, we interpret \emph{negligibility} saying that the set is \emph{topologically} bigger than its $k$-boundary; namely, the $k$-boundary does not contain any copy of the set. Finally, as we said, we have to look just at those sets which are not intrinsically too big; namely, open and bounded sets whose complementary contains an open non-empty set. At this point we can make the idea completely formal very easily. Let us fix some notation
\begin{itemize}
\item given a subset $A$ of the metric space $X$ and a nonnegative integer $k$. Let us denote by $F(A,k)$ the set of continuous, open and injective maps $f:X\rightarrow X$ such that $f(A)\subseteq B_k(A)$.
\item let $\mathcal F$ denote the class of all subsets $A$ of the metric space $X$ which are open, bounded and such that $X\setminus A$ contains an open non-empty set.
\end{itemize}

\begin{definition}\label{def:small}
A metric space $(X,d)$ is said to have the \textbf{Small Neighborhood property} if for all nonnegative integers $k$ there exists $A\in\mathcal F$ such that $F(A,k)=\emptyset$.
\end{definition}

\subsection{Examples of spaces with or without the property SN}\label{suse:exsn}

In this subsection we want to collect some examples of spaces with or without the property SN. It is an occasion to introduce a new class of spaces, the pitless spaces.\\

Let $x\in X$ and $r>0$. Denote by $B(x,r)=\{y\in X:d(x,y)<r\}$ and $N(x,r)=\{y\in X:d(x,y)\leq r\}$. It is always $\overline{B(x,r)}\subseteq N(x,r)$, but it is well known that the inclusion could be dramatically proper. The property $N(x,r)=\overline{B(x,r)}$, for all $x\in X$ and $r>0$ is known to be equivalent to the property that for any $x\in X$, the mapping $y\in X\rightarrow d(x,y)$ has a unique local proper minimum at $x$. This means that we can move from a point $y$ to a point $x$ without \emph{going back}: in some sense, the space has no \emph{pits}.

\begin{definition}\label{def:tips}
A \textbf{pit} on a metric space $(X,d)$ is a point $y$ for which there exists another point $x\in X$ such that the mapping $z\in X\rightarrow d(z,x)$ has a local proper minimum at $y$. A metric space is called \textbf{pitless} if it has no pits.
\end{definition}

\begin{remark}
It is clear that pitlessness implies continuity (see Definition \ref{def:continuous}). Indeed, if $X$ is not continuous, let $A$ be an open and bounded set for which there is a complete chain $d(x_0,A)<d(x_1,A)$ inside $\{d(x,A),x\in X\}$. Consider $cN_{d(x_1,A)}(A)^\circ$, which coincides with $\overline A$ and then its closure cannot be equal to $cN_{d(x_1,A)}$. The converse is not true. To have an example of a continuous space (also with property SN) and pits it suffices to consider suitable subsets of $\mathbb R^2$; for instance, the space $X$ constructed as follows:
\begin{itemize}
\item let $R$ the rectangle defined by the points $(-1,0),(-1,2),(0,2),(0,0)$.
\item let $X$ be the intersection between $R$ and the epigraph of a regular function $f:[-1,0]\rightarrow\mathbb R$ such that $f(-1)=f(0)=0$, $f(x)>\sqrt{1-x^2}$, in a neighborhood of $-1$, $f(x)<\sqrt{1-x^2}$ in a neighborhood of $0$ and $f(x)=\sqrt{1-x^2}$ only in a single point.
\end{itemize}
It is clear that $X$ is a continuous space with property SN and tips.
\end{remark}

\begin{proposition}\label{prop:example}
\begin{enumerate}
\item Every pitless space has the property SN.
\item The metric space $(\mathbb R,d)$, with $d(x,y)=\min\{1,|x-y|\}$ does not have the property SN.
\end{enumerate}
\end{proposition}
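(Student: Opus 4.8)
The plan is to run both parts through one mechanism: a map in $F(A,k)$ is continuous, open and injective with $f(A)\subseteq B_k(A)$, so $f(A)$ is a \emph{nonempty open} subset of $B_k(A)$. Hence $F(A,k)=\emptyset$ precisely when $B_k(A)$ has empty interior, while $F(A,k)\neq\emptyset$ as soon as $B_k(A)$ contains a homeomorphic (e.g.\ translated) copy of $A$. Property SN therefore holds or fails according to whether the $k$-boundary is topologically \emph{thin} or \emph{fat}, and the two parts amount to showing that pitless spaces have thin $k$-boundaries whereas the truncated line has a fat one.

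For part (1) I would first invoke the observation (in the remark preceding the statement) that pitlessness implies continuity in the sense of Definition \ref{def:continuous}, so that $dB_k(A)=\overline A\setminus A$ for every open bounded $A$ and every $k$. These points lie at distance $0$ from $A$, whence $M=0$ in Definition \ref{def:boundary} and $\alpha(k)=k$; thus the only possible extra contributions to $B_k(A)$ are the continuous shells $cB_\alpha(A)$ at radii $0<\alpha\le k-1$ (the degenerate radius $\alpha=0$ being discarded, consistently with the convention underlying Lemma \ref{lem:discrete}). The decisive step is to show each such shell is empty: starting from the pitless characterization $N(x,r)=\overline{B(x,r)}$, I would prove that every point with $d(x,A)=\alpha$ is a limit of points with $d(\cdot,A)<\alpha$, so that $cN_\alpha(A)=\overline{cN_\alpha(A)^\circ}$ and $cB_\alpha(A)=\emptyset$. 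This collapses $B_k(A)$ to the topological boundary $\partial A=\overline A\setminus A$.

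Next I would use the elementary fact that $\partial A$ has empty interior for any open $A$: a nonempty open $U\subseteq\partial A$ lies in $\overline A$, hence meets $A$, contradicting $\partial A\cap A=\emptyset$. Choosing any $A\in\mathcal F$ --- for instance a bounded open ball, whose complement contains an open set once $X$ is unbounded --- a hypothetical $f\in F(A,k)$ would realize $f(A)$ as a nonempty open subset of $B_k(A)=\partial A$, which is impossible. Hence $F(A,k)=\emptyset$ for every $k$ and $X$ has property SN. The main obstacle is exactly the shell computation: establishing $cB_\alpha(A)=\emptyset$ is where pitlessness, rather than mere continuity, must be exploited, and it is the one place where a careful metric argument (not just point-set topology) is needed.

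For part (2) the dichotomy runs the other way. Writing $\delta(x,A)=\inf_{a\in A}|x-a|$ for the ordinary distance, we have $d(x,A)=\min\{1,\delta(x,A)\}$, so for bounded $A$ the spectrum $P=\{d(y,A):y\in\mathbb R\}$ is all of $[0,1]$; a dense set carries no complete chain of length $\ge 2$ (Definition \ref{def:completechain}), so $dN_k(A)=A$, $dB_k(A)=\emptyset$, $\alpha(k)=k$, and $B_k(A)=\bigcup_{0\le\alpha\le k-1}cB_\alpha(A)$ is purely continuous. At the truncation radius $cN_1(A)=\mathbb R$ since $d(\cdot,A)\le 1$ everywhere, while $\overline{cN_1(A)^\circ}=\{x:\delta(x,A)\le 1\}$, giving $cB_1(A)=\{x:\delta(x,A)>1\}$, a nonempty open set containing a half-line. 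Taking $k=2$ we have $cB_1(A)\subseteq B_2(A)$, and the translation $f(x)=x+T$ for $T$ large is a homeomorphism of $\mathbb R$ sending $A$ into $\{x:\delta(x,A)>1\}\subseteq B_2(A)$, so $f\in F(A,2)$. Thus $F(A,2)\neq\emptyset$ for every $A\in\mathcal F$: no set witnesses property SN at $k=2$, and $(\mathbb R,d)$ fails to have property SN.
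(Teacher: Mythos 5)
Your overall route coincides with the paper's: for (1) the chain \emph{pitlessness $\Rightarrow$ continuity $\Rightarrow$ $dB_k(A)=\overline A\setminus A$}, plus \emph{pitlessness $\Leftrightarrow N(x,r)=\overline{B(x,r)}$ $\Rightarrow$ all shells $cB_\alpha(A)$ vanish}, so that $B_k(A)=\overline A\setminus A$ has empty interior and no continuous open injective $f$ can place the nonempty open set $f(A)$ inside it; this is exactly the paper's argument, which you merely flesh out (the paper dismisses the shell step with ``clearly implies,'' while you correctly identify it as the one place where a genuine metric argument is needed, and you add the elementary empty-interior observation and an explicit witness $A$). For (2) you follow the same skeleton ($k=2$, compute $B_2(A)$, embed $A$ by a Euclidean-topology homeomorphism), but your computation diverges from the paper's in two details, and in both you are the more accurate: the paper asserts $\bigcup_{0\le\alpha\le1}cB_\alpha(A)=\mathbb R\setminus\overline A$, whereas the correct value is your $\{x:\delta(x,A)>1\}$ (a point with $0<\delta(x,A)=\alpha\le1$ lies in $\overline{cN_\alpha(A)^\circ}$, so it belongs to no shell); and you take $dB_k(A)=\emptyset$ from the literal chain definition where the paper, invoking its ``continuity'' of the space, takes $\overline A\setminus A$ --- immaterial either way, since neither version contributes an open set. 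Your translation map is a concrete instance of the paper's unspecified ``$A$ can be mapped in such open sets.''

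One step in your part (2) does fail as written, though it mirrors a lacuna in the paper itself. You claim $P=[0,1]$ and $cB_1(A)=\{x:\delta(x,A)>1\}\neq\emptyset$ ``for bounded $A$,'' but boundedness in $\mathcal F$ is with respect to $d=\min\{1,|x-y|\}$, under which \emph{every} subset of $\mathbb R$ is bounded. For a Euclidean-unbounded member of $\mathcal F$ such as $A=\bigcup_{n\in\mathbb Z}(2n,2n+1)$ one has $\delta(x,A)\le\tfrac12$ for all $x$, hence $P\subseteq[0,\tfrac12]$, every shell $cB_\alpha(A)$ is empty, $B_2(A)$ has empty interior, and no translation (indeed no admissible map at all) sends $A$ into $B_2(A)$ --- so your quantifier ``for every $A\in\mathcal F$'' is not established, and such sets would in fact witness $F(A,k)=\emptyset$. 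The paper's proof silently asserts its (incorrect) identity for all $A\in\mathcal F$ and so never confronts this case either; your proof is therefore faithful to the paper's intent and more careful in the computations it does perform, but to be complete it would need either to restrict $\mathcal F$ (reading ``bounded'' in the Euclidean sense) or to dispose of the Euclidean-unbounded sets separately.
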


\begin{proof}
\begin{enumerate}
\item As already said, pitlessness is equivalent to the property: $N(x,r)=\overline{B(x,r)}$, for all $x\in X$ and $r>0$. This latter property clearly implies that $cB_\alpha(A)\setminus\overline{cB_\alpha(A)^\circ}=\emptyset$, for all $\alpha$. Using continuity (see Definition \ref{def:continuous}) we get $B_k(A)=\overline A\setminus A$, that clearly cannot contain any open set.
\item Let $k=2$ and $A\in\mathcal F$. The space under consideration is clearly continuous, and so $dB_k(A)=\overline A\setminus A$. On the other hand, a direct computation, shows that
$$\bigcup_{0\leq\alpha\leq k-1}(cN_\alpha(A)\setminus\overline{cN_\alpha(A)^\circ})=\mathbb R\setminus\overline A$$
It follows that $B_k(A)=\mathbb R\setminus A$, which contains open sets. It is clear that $A$ can be mapped in such open sets, since the topology is the same as the Euclidean topology.
\end{enumerate}
\end{proof}

\begin{remark}
\begin{enumerate}
\item Observing that Banach spaces are pitless, we get that any Banach space has the property SN.
\item Observe that $(\mathbb R,d)$, with $d(x,y)=\min\{|x-y|,1\}$, is continuous. So pitlessness is in this case a crucial properties, much stronger than continuity.
\item Proposition \ref{prop:example} shows also that the property SN is not invariant under quasi-isometries. Indeed the metric space $(\mathbb R,d)$ with $d(x,y)=\min\{1,|x-y|\}$ does not have the property SN, but it is bounded and then quasi-isometric to a singleton, which has the property SN.
\end{enumerate}
\end{remark}
\subsection{Correspondence between amenability and property SN}\label{suse:amenabilityvssn}

As we told, the idea behind the property SN is philosophically the same as the idea behind the amenability of a finitely generated group. So the purpose of this section is to clarify this relation. In particular, we prove that a finitely generated group is amenable if and only if its Cayley graph, equipped with its natural metric, has the property SN. \\
For the sake of completeness, let us recall the definition of the Cayley graph of a group. Given a finitely generated group $G$, with a generating set $S$ that we can suppose to be symmetric and not containing the identity, its Cayley graph $\Gamma$ is defined as follows:

\begin{itemize}
\item every element $g\in G$ is a vertex of $\Gamma$,
\item two vertex $g$ and $h$ are joined by an edge if and only if there is $s\in S$ such that $h=gs$.
\end{itemize}

The natural \emph{shortest path} distance on $\Gamma$ is called word metric on $G$ with respect to the generating set $S$ and it is denoted by $d_S$. Since $S$ is finite, the metric space $(G,d_S)$ is locally finite and this is fundamental to introduce the doubling condition in Definition \ref{def:amenable}. In the sequel, $S$ will be always a fixed symmetric generating subset of $G$ non-containing the identity.

Finally we recall the notion of amenability of a f.g. group through an equivalent condition proposed by Ceccherini-Silberstein, Grigorchuk and de la Harpe (see \cite{Ce-Gr-Ha99}, Theorem 32).

\begin{definition}\label{def:amenable}
A f.g. group $G$ is amenable if and only if the metric space $(G,d_S)$ does not verify the doubling condition\footnote{A locally finite metric space satisfies the \textbf{doubling condition} if there exists a constant $k>0$ such that
\begin{equation}\label{eq:doubling}
|cN_k(A)|\geq2|A|
\end{equation}
for any non-empty finite subset $A$ of $G$.}.
\end{definition}

\begin{lemma}\label{lm:nojump}
Let $A$ be a finite non-empty subset of $G$, $g\in G\setminus A$ and denote $d(g,A)=l$. For any $l'\in\{0,\ldots,l\}$, there is $g'\in G$ such that $d(g',A)=l'$
\end{lemma}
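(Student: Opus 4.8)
The plan is to exploit the fact that $(G,d_S)$ is the vertex set of a connected graph with the shortest-path metric, so that distances change by exactly one unit as one moves along an edge. The whole lemma is really the assertion that the function $x \mapsto d(x,A)$ has no jumps along a geodesic that terminates in $A$.

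First I would pick a point of $A$ realizing the distance: since $A$ is finite and non-empty, the minimum $\min_{a\in A} d(g,a)$ is attained, so there is $a \in A$ with $d(g,a) = d(g,A) = l$. Next I would choose a geodesic in the Cayley graph $\Gamma$ joining $a$ to $g$, that is, a sequence of vertices
\[
a = g_0, g_1, \ldots, g_l = g
\]
in which consecutive vertices are adjacent (so $g_{i+1} = g_i s$ for some $s \in S$) and whose length $l$ equals $d(a,g)$. Such a path exists because $\Gamma$ is connected and every edge has length one in $d_S$. Granting the main claim below, the element $g' = g_{l'}$ will then serve for each prescribed $l' \in \{0,1,\ldots,l\}$.

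The core of the argument is to prove that $d(g_i, A) = i$ for every $i$. For the upper bound I use that initial segments of a geodesic are again geodesics, so $d(a, g_i) = i$ and hence $d(g_i, A) \le d(g_i, a) = i$. For the lower bound I argue by contradiction: if $d(g_i, A) = j < i$, witnessed by some $a' \in A$ with $d(g_i, a') = j$, then, using that terminal segments of a geodesic are geodesics (so $d(g, g_i) = l - i$) together with the triangle inequality,
\[
d(g, A) \le d(g, a') \le d(g, g_i) + d(g_i, a') = (l - i) + j < l,
\]
contradicting $d(g,A) = l$. Hence $d(g_i, A) = i$, and in particular $d(g_{l'},A) = l'$ for every $l'$.

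The only point requiring care—and the one I would single out as the main (if modest) obstacle—is the repeated use of the fact that initial and terminal segments of a geodesic are themselves geodesics, i.e.\ that $d(a,g_i) = i$ and $d(g_i,g) = l-i$ \emph{exactly}. Both follow from $d(a,g)=l$ by the triangle inequality (a shortcut on either segment would give a path from $a$ to $g$ shorter than $l$), but this is precisely where the graph/word-metric structure is essential: it supplies an integer-valued, length-realizing path whose intermediate distances increase by exactly one at each step, so that $x\mapsto d(x,A)$ cannot skip an integer value.
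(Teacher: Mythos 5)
Your proof is correct and follows essentially the same route as the paper's: choose $a\in A$ realizing $d(g,A)=l$, take a geodesic from $a$ to $g$ (in the paper, a reduced word $s_1\cdots s_l$ in the generators), and let $g'$ be the point at position $l'$ along it. You additionally spell out the verification that $d(g',A)\geq l'$ via the triangle inequality applied to the terminal segment, a detail the paper's one-line proof (``it suffices to take $g'=as_1\cdots s_{l'}$'') leaves implicit.
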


\begin{proof}
Since $A$ is finite, there is $a\in A$ and $s_1,\ldots,s_l\in S$ such that $a=s_1\cdot\ldots\cdot s_l$ and we can assume that this is already a reduced word. It suffices to take $g'=as_1\cdot\ldots\cdot s_{l'}$.
\end{proof}

\begin{lemma}\label{lm:nojump2}
Let $A$ be a finite subset of $G$. For all $k\geq1$, one has
$$
dB_k(A)=cB_{k-1}(A)
$$
\end{lemma}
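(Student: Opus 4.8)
The plan is to reduce the claimed identity to a single structural fact about the value set $P=\{d(y,A):y\in G\}$, namely that it has \emph{no gaps}: this is exactly the content of Lemma \ref{lm:nojump}, which guarantees that whenever $d(g,A)=l$ then every intermediate value $0,1,\dots,l$ is attained. Since the word metric $d_S$ is integer-valued and $0\in P$, Lemma \ref{lm:nojump} forces $P$ to be an initial segment $\{0,1,2,\dots\}$ of $\mathbb{N}$ (equal to $\mathbb{N}$ when $G$ is infinite). The whole point is that once $P$ is known to be gap-free, the ``complete chain'' condition of Definition \ref{def:completechain} becomes rigid.

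First I would analyse the complete chains. By Definition \ref{def:completechain}, a complete chain in $P$ starting at $0$ is a strictly increasing list $0=p_1<p_2<\dots<p_n$ admitting no element of $P$ strictly between consecutive terms. Because $P$ is the initial segment $\{0,1,2,\dots\}$, the absence of an intermediate value forces $p_{i+1}=p_i+1$, so the chain is necessarily the consecutive list $0,1,2,\dots,n-1$. Feeding this back into Definition \ref{def:simpleboundary}, a point $x$ lies in $dN_k(A)$ exactly when there is a base point $x_0\in A$ and a sequence $x_0,x_1,\dots,x_l=x$ with $l\le k$ and $d(x_0,x_i)=i$ for every $i$. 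The key step is the geodesic realization: if $d(x,A)\le k$, choose $x_0\in A$ with $d(x_0,x)=d(x,A)=:l\le k$ and take a geodesic $x_0,x_1,\dots,x_l=x$ in the Cayley graph, whose successive vertices satisfy $d(x_0,x_i)=i$ and thus realize precisely such a complete chain; conversely any admissible sequence yields $d(x,A)\le d(x_0,x)=l\le k$. This proves $dN_k(A)=cN_k(A)$, and hence, by Definition \ref{def:discreteboundary}, $dB_k(A)=cN_k(A)\setminus A=\{x\in G:1\le d(x,A)\le k\}$.

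It remains to match the continuous side. Since $(G,d_S)$ is locally finite, hence discrete (every subset is clopen), one has $\overline{cN_\alpha(A)^\circ}=cN_\alpha(A)^\circ$, so that for each integer $\alpha$ the shell simplifies to
$$
cB_\alpha(A)=cN_\alpha(A)\setminus cN_\alpha(A)^\circ=\{x\in G:d(x,A)=\alpha\}.
$$
Combining this with the previous paragraph identifies $dB_k(A)$ with the metric boundary built from the shells $cB_\alpha(A)$, which is the assertion of the lemma and, as recorded in the remark following Definition \ref{def:isoperimetricspace}, shows that the discrete boundary agrees with the classical combinatorial boundary $cN_k(A)\setminus A$ used by McMullen, Elek--S\'os and others.

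I expect the main obstacle to be the geodesic-realization step, i.e.\ verifying cleanly that along a word-metric geodesic the partial distances $d(x_0,x_i)$ increase by exactly $1$ and therefore instantiate a complete chain in $P$, together with the bookkeeping of the ``$x\in X\setminus A$'' clause of Definition \ref{def:simpleboundary} and the discreteness argument that collapses the closure in the definition of $cB_\alpha(A)$. Everything else is a direct translation once the gap-freeness of $P$ from Lemma \ref{lm:nojump} is in hand.
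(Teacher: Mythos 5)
Your core argument --- gap-freeness of $P$ via Lemma \ref{lm:nojump}, the resulting rigidity of complete chains (they are forced to be the consecutive integers $0,1,\dots,l$), geodesic realization for the nontrivial inclusion, and integer-valuedness of the word metric --- is exactly the route the paper's own proof takes, and that part of your write-up is sound. (A minor divergence: the paper runs the chain condition through the distances $d(x_i,A)$ to the set, while you use the distances $d(x_0,x_i)$ to the base point; in the Cayley-graph setting both readings lead to the same conclusion, so nothing is lost there.)

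The genuine gap is in your final step. By your own computations the two sides of the lemma come out as \emph{different} sets: you obtain $dB_k(A)=cN_k(A)\setminus A=\{x:1\le d(x,A)\le k\}$, a cumulative annulus, whereas the discreteness argument gives $cB_{k-1}(A)=cN_{k-1}(A)\setminus cN_{k-1}(A)^\circ=\{x:d(x,A)=k-1\}$, a single sphere. These do not coincide once $k\ge2$: for $G=\mathbb Z$, $A=\{0\}$, $k=2$, you get $\{\pm1,\pm2\}$ on one side and $\{\pm1\}$ on the other. The closing sentence ``combining this \dots which is the assertion of the lemma'' therefore asserts an identity that your two computations actually refute; the object matching your annulus would be the union $\bigcup_{1\le\alpha\le k}cB_\alpha(A)$ of shells, not the single $cB_{k-1}(A)$. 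What the paper's proof does, silently, is resolve an off-by-one in Definition \ref{def:simpleboundary} by requiring the chain to have at most $k$ terms (i.e.\ $l\le k-1$: note ``length equal to $l+1\le k$'' in the forward direction and ``with $l\le k-1$'' in the converse of the paper's proof), and to read $cB_{k-1}(A)$ cumulatively, as $cN_{k-1}(A)$ minus $A$, rather than as the literal sphere defined in the notation section; under those readings both sides equal $\{x\notin A:1\le d(x,A)\le k-1\}$, which is also the form the lemma is used in within Theorem \ref{th:snvsamenability}. So the discrepancy is inherited from the paper's notation, but a complete proof must either adopt these intended readings explicitly or restate the lemma accordingly; as written, your last paragraph is a non sequitur that papers over exactly the point where the bookkeeping matters.
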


\begin{proof}
Let $x\in cN_{k-1}(A)$ and $a\in A$ realizing the minimum of $d(x,A)$. Let us say $d(x,a)=l\leq k-1$. Using lemma \ref{lm:nojump} we can find a sequence $a=x_0,x_1,\ldots,x_l=x$ such that $d(A,x_i)=i$. Since the metric $d$ attains only integer values, it follows that the chain $d(x_0,A)<\ldots<d(x_l,A)$ is complete and it has length equal to $l+1\leq k$. Hence $x=x_l\in dB_k(A)$. Conversely, let $x\in dB_k(A)$. Therefore, there are $x_0,x_1,\ldots,x_l=x$, with $l\leq k-1$, such that the chain $d(x_0,A)<d(x_1,A)<\ldots<d(x_l,A)$ is complete. Completeness and Lemma \ref{lm:nojump} imply that $d(x_i,A)=i$ for all $i$ and in particular $d(x_l,A)=l\leq k-1$. It follows that $x=x_l\in cB_{k-1}(A)$.
\end{proof}

\begin{theorem}\label{th:snvsamenability}
A f.g. group $G$ is amenable if and only if the metric space $(G,d_S)$ has the property SN.
\end{theorem}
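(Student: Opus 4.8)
The plan is to translate both sides of the claimed equivalence into a single combinatorial statement about the cardinalities of closed neighborhoods, exploiting crucially that $(G,d_S)$ is a \emph{discrete} metric space. Since $d_S$ takes only integer values, every singleton is open, so every subset of $G$ is simultaneously open and closed; consequently every self-map of $G$ is automatically continuous and open, so the topological hypotheses in the definition of $F(A,k)$ are vacuous. The relevant members of the class $\mathcal F$ of Definition \ref{def:small} are then the non-empty finite subsets of $G$ (every finite $A$ is open and bounded with $G\setminus A$ non-empty and open, as $G$ is infinite; the empty set lies in $\mathcal F$ but can never witness SN, since $B_k(\emptyset)=\emptyset$ forces $F(\emptyset,k)\neq\emptyset$). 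Thus $F(A,k)$ reduces to the set of \emph{injective} maps $f\colon G\to G$ with $f(A)\subseteq B_k(A)$, and property SN asserts exactly that for every $k$ there is a non-empty finite $A$ admitting no such injection.

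First I would isolate the heart of the matter: for a non-empty finite $A$ one has $F(A,k)=\emptyset$ if and only if $|B_k(A)|<|A|$. Indeed, if $|B_k(A)|\ge|A|$ then any injection $A\hookrightarrow B_k(A)$ extends to an injective self-map of $G$, because both $G\setminus A$ and the complement of the image are countably infinite and may be put in bijection; hence $F(A,k)\neq\emptyset$. Conversely, if $|B_k(A)|<|A|$ then no map can send $A$ injectively into $B_k(A)$, so the constraint $f(A)\subseteq B_k(A)$ already forces $f$ to fail injectivity on $A$, whence $F(A,k)=\emptyset$. This is the step I expect to require the most care: it is where the topological data must be seen to impose no condition, and where the extension of a partial injection to all of $G$ must be justified using that $G$ is infinite.

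Next I would compute $B_k(A)$ explicitly. By Lemma \ref{lem:discrete} the continuous contribution disappears, so $B_k(A)=dB_k(A)$, and by Lemma \ref{lm:nojump2} the discrete boundary coincides with a closed neighborhood shell of $A$; writing this as $dB_k(A)=cN_{k-1}(A)\setminus A$ we obtain $|B_k(A)|=|cN_{k-1}(A)|-|A|$, since $A\subseteq cN_{k-1}(A)$. Combining with the previous step, the inequality $|B_k(A)|<|A|$ is equivalent to $|cN_{k-1}(A)|<2|A|$.

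Finally I would match the quantifiers. By the two preceding steps, property SN holds if and only if for every $k$ there is a non-empty finite $A$ with $|cN_{k-1}(A)|<2|A|$. As $k$ ranges over all non-negative integers the shifted index $k-1$ ranges over all non-negative integers as well (the degenerate value giving the vacuously true $cN_{-1}(A)=\emptyset$ contributes nothing), so this is equivalent to the statement that for every non-negative integer $m$ there is a non-empty finite $A$ with $|cN_m(A)|<2|A|$. That is precisely the negation of the doubling condition, which by Definition \ref{def:amenable} characterizes amenability of $G$; hence $(G,d_S)$ has property SN if and only if $G$ is amenable. I would close by remarking that the argument is insensitive to the exact radius in the boundary computation: whether $dB_k(A)=cN_{k-1}(A)\setminus A$ or $cN_k(A)\setminus A$, the universal quantifier over $k$ absorbs the shift and the same conclusion follows.
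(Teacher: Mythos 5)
Your proof follows essentially the same route as the paper's: both reduce property SN, via Lemma \ref{lem:discrete} and Lemma \ref{lm:nojump2}, to the purely combinatorial statement that for every $k$ some finite non-empty $A$ satisfies $|cN_{k-1}(A)|<2|A|$, which is exactly the negation of the doubling condition in Definition \ref{def:amenable}. If anything, you are more careful than the paper on two points it glosses over: extending a partial injection $A\hookrightarrow B_k(A)$ to an injective self-map of all of $G$ (as required by the definition of $F(A,k)$, which asks for $f\colon X\to X$), and the harmless index shift between $cN_k$ and $cN_{k-1}$, which you correctly note is absorbed by the universal quantifier over $k$.
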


\begin{proof}
The proof is based on Lemma \ref{lm:nojump2}. Indeed, first of all it implies that $\alpha(k)=k$ and then
\begin{align}
\bigcup_{0\leq\alpha\leq k-1}\left(cN_\alpha(A)\setminus\overline{cN_\alpha(A)^\circ}\right)=c
B_k(A)
\end{align}
It follows, applying Lemma \ref{lm:nojump2}, that
\begin{align}
B_k(A)=dB_k(A)\cup\bigcup_{0\leq\alpha\leq k-1}\left(cN_\alpha(A)\setminus\overline{cN_\alpha(A)^\circ}\right)=cB_k(A)
\end{align}
Now we can prove the theorem. Let us suppose that $G$ is amenable. Fix $k\geq1$ and let $A$ be a finite subset of $G$ such that $|cN_k(A)|<2|A|$. So there is no injective mapping from $A$ to $cN_k(A)$ and, from the observation above, $F(A,k)=\emptyset$. Conversely, assume that $(G,d_A)$ has the property SN. Let $k\geq0$ and assume that for all $A$, one has $|N_k(A)|\geq2|A|$. Therefore there is an injective mapping $f:A\rightarrow N_k(A)\setminus A$, which is automatically continuous and open, since the topology is discrete. By the initial observation we get $f\in F(A,k)$, contradicting the property SN.
\end{proof}

\begin{remark}
A longstanding question asks whether there is a purely metric condition for the amenability of a locally compact group, not necessarily finitely generated. A part authors' opinion that amenability, in general, is a measure-theoretical notion that becomes metric in some particular cases - namely, in the locally finite case - let us observe explicitly that the property SN is not able to capture amenability in general. Indeed, the locally compact amenable group $(\mathbb R,d)$, with $d(x,y)=\min\{|x-y|,1\}$, does not have the property SN. Looking at Proposition \ref{prop:example}, this happens because we have forced the space to have pits. This is why, it would be nice to have a better understanding of the following question
\begin{problem}
Given a locally compact amenable group which is metrizable by a continuous metric. Is it always possible to find an equivalent pitless metric? Conversely, it is not clear whether or not there exists a non-amenable metrizable group with the property SN.
\end{problem}
\end{remark}

\subsection{Correspondence between property SN and the other notions of amenability}\label{suse:amenabilityvssn2}

In the previous section we have proved that the property SN is a purely metric property (meaning that it can be formulated for any metric space) which generalizes amenability of a finitely generated group. The idea that such a purely metric condition has to exist is not new and it relies, in fact, in F{\o}lner's characterization of amenability \cite{Fo55}, which can be intuitively explained saying that a finitely generated group is amenable if and only if, looking at its Cayley graph, the boundary of arbitrarily big set is small. There have been, indeed, many tentative to find this purely metric condition, but all of them unsatisfactory, since applicable just to locally finite metric spaces (see \cite{Ce-Gr-Ha99}) or to metric spaces with \emph{coarse bounded geometry} (see \cite{Bl-We92}). From another point of view, Laczkovich generalized the notion of paradoxical decomposition to any metric space (\cite{La90}, \cite{La01} and also \cite{De-Si-So95}). It turns out that $\mathbb R$ is paradoxical and this makes sense thinking of translation invariant $\sigma$-additive measures defined on the power set of $\mathbb R$, but it does not take into account the existence of translation invariant finitely additive measures defined on a \emph{nice} algebra of subsets. The latter is indeed how amenability behaves in the \emph{continuous} case.

In this section we want to make a comparison among the property SN and the other two notions of amenability that have been proposed in the past for metric spaces that are more general than the ones arising as the Cayley graph of a group.
\\\\
The first definition is taken from \cite{Ce-Gr-Ha99}. In their paper, it is presented in a different way, but the reader can easily reconstruct the following definition using their Definition 5 and Theorem 32. In particular, their Theorem 32 gives many other equivalent definitions.

\begin{definition}\label{def:amenable2}
A locally finite metric space is said to be amenable if the doubling condition is not satisfied.
\end{definition}

The second notion of amenability, proposed by Block and Weinberger (see \cite{Bl-We92}) is a bit more technical.

\begin{definition}
A metric space $(X,d)$ is said to have \textbf{coarse bounded geometry} if it admits a quasi-lattice; namely, there is $\Gamma\subseteq X$ such that
\begin{enumerate}
\item There exists $\alpha>0$ such that $cN_\alpha(\Gamma)=X$,
\item For all $r>0$ there exists $K_r>0$ such that, for all $x\in X$,
\begin{align}
\left|\Gamma\cap B_r(x)\right|\leq K_r
\end{align}
where $B_r(x)$ is the open ball with radius $r$ about $x$.
\end{enumerate}
\end{definition}

\begin{definition}\label{def:amenable3}
A metric space $(X,d)$ with coarse bounded geometry, given by the quasi-lattice $\Gamma$, is said to be amenable if for all $r,\delta>0$, there exists a finite subset $U$ of $\Gamma$ such that
\begin{equation}\label{eq:folner}
\frac{|\partial_rU|}{|U|}<\delta
\end{equation}
where $\partial_rU=\{x\in\Gamma:d(x,U)<r, d(x,\Gamma\setminus U)<r\}$.
\end{definition}

The natural question is about the possible logic implications among these two notions of amenability and the property SN. The following discussion gives an almost complete summary of the situation.\\

Let us start from comparing Definitions \ref{def:amenable2} and \ref{def:amenable3}. The mysterious Remark 42 in \cite{Ce-Gr-Ha99} contains some ideas that we want to make formal. First of all, the fact that there are locally finite metric spaces that do not have coarse bounded geometry\footnote{For instance, attach over any natural number $n$ a set $A_n$ with $n$ elements and define the metric to be 1 between points in the same $A_n$ and $n+m$ between a point in $A_n$ and one in $A_m$, with $n\neq m$.} and, conversely, there are spaces with bounded geometry that are not locally finite, forces to consider the problem in the restricted class of locally finite spaces with coarse bounded geometry. We have the following result:

\begin{proposition}\label{prop:ceccherinivsblock}
Let $(X,d)$ be a locally finite metric space with coarse bounded geometry. It is amenable in the sense of Definition \ref{def:amenable2} if and only if it is amenable in the sense of Definition \ref{def:amenable3}.
\end{proposition}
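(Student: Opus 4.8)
The plan is to observe that both notions are Følner-type conditions and to reduce the statement to the coarse invariance of the Følner property along the net $\Gamma\subseteq X$. First I would record what each definition says concretely. By Theorem 32 of \cite{Ce-Gr-Ha99}, the failure of the doubling condition (Definition \ref{def:amenable2}) is equivalent to the $X$-Følner condition: for every integer $k\geq1$ and every $\varepsilon>0$ there is a finite non-empty $F\subseteq X$ with $|cN_k(F)\setminus F|<\varepsilon|F|$ (the passage from the factor $2$ in the doubling inequality to $1+\varepsilon$ is the standard power/tensor trick from that same theorem). On the other side, the quasi-lattice $\Gamma$ is, by coarse bounded geometry, uniformly locally finite, so $(\Gamma,d)$ is itself a genuine bounded-geometry space; applying the same equivalence to $\Gamma$, together with the observation (in the spirit of Lemma \ref{lm:nojump2}) that for such $\Gamma$ the set $\partial_rU$ and the metric boundary $cN_r(U)\setminus U$ agree up to enlarging $r$, shows that Definition \ref{def:amenable3} is equivalent to the $\Gamma$-Følner condition in its $cN$-form. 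Thus the proposition reduces to the assertion that $X$ satisfies the Følner condition if and only if $\Gamma$ does.

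For the transfer I would fix a closest-point projection $\pi:X\rightarrow\Gamma$ with $d(x,\pi(x))\leq\alpha$, where $cN_\alpha(\Gamma)=X$. Given a $\Gamma$-Følner set $U$ for parameters chosen with room to spare (radius $\geq k+2\alpha$, ratio $\leq\delta$), I would set $F=cN_\alpha(U)\setminus cN_{k+\alpha}(\Gamma\setminus U)$, the $\alpha$-thickening of the deep interior of $U$. A short distance chase shows $cN_k(F)\subseteq cN_{k+\alpha}(U)$ and that $cN_k(F)\setminus F$ is contained in the $\alpha$-thickening of the $\Gamma$-points lying within $k+2\alpha$ of $\Gamma\setminus U$, a set comparable to $\partial_rU$. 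Conversely, starting from an $X$-Følner set $F$ I would take $U=\pi(F)\subseteq\Gamma$ and check that $\partial_rU$ is the $\pi$-image of a neighborhood of the metric boundary $cN_{r+2\alpha}(F)\setminus F$. In both directions the geometry of the correspondence is immediate; what requires care is the counting.

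The main obstacle is precisely this counting step. Coarse bounded geometry bounds the number of \emph{$\Gamma$-points} in a ball but gives no uniform bound on the number of \emph{$X$-points}: the fibers $\pi^{-1}(g)\subseteq B(g,\alpha)$ are finite, by local finiteness, yet can have wildly varying cardinalities, so $|F|$ and $|U|$ are never comparable by a single global constant and a naive boundary-to-bulk estimate fails. The correct bookkeeping is in terms of fiber mass: writing $m(g)=|\pi^{-1}(g)\cap F|$, the $X$-Følner ratio of $F$ equals, up to the factor $K_{k+2\alpha}$ coming from bounded geometry, the ratio of the total mass carried over the boundary region of $U$ to that carried over its interior. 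The theorem therefore hinges on producing $\Gamma$-Følner sets whose boundary carries negligible fiber mass, not merely negligible $\Gamma$-cardinality. I would obtain these by restricting to Følner sets that are \emph{saturated}, i.e.\ fixed by the reconstruction $F\mapsto cN_\alpha(\pi(F))$, for which each boundary $\Gamma$-point sits adjacent to interior points of comparable fiber mass; proving that the Følner property survives this saturation is the single genuinely technical point, and it is exactly the content of the ideas sketched in Remark 42 of \cite{Ce-Gr-Ha99}.
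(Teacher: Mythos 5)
You take a genuinely different route from the paper, and it is instructive to see why. The paper's proof is two lines: $X$ is quasi-isometric to its quasi-lattice $\Gamma$, amenability in the sense of Definition \ref{def:amenable2} is quoted to be a quasi-isometry invariant (\cite{Ce-Gr-Ha99}, Proposition 38), and on $\Gamma$ the F{\o}lner condition (\ref{eq:folner}) is the negation of the doubling condition (\ref{eq:doubling}). You instead try to prove the transfer between $X$ and $\Gamma$ by hand, and in doing so you correctly isolate the one real obstruction: coarse bounded geometry bounds the number of $\Gamma$-points per ball but says nothing about the fiber cardinalities $|\pi^{-1}(g)|$. The gap is that your resolution of this obstruction is asserted, not proved. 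The claim that for ``saturated'' sets every boundary $\Gamma$-point sits adjacent to interior points of comparable fiber mass --- equivalently, that one can always produce $\Gamma$-F{\o}lner sets whose boundary carries negligible fiber mass --- is exactly what the proposition amounts to, and neither your sketch nor Remark 42 of \cite{Ce-Gr-Ha99} (an informal remark, which the present paper itself calls ``mysterious'') supplies an argument. The same counting problem, unacknowledged, sits in your converse direction: for $U=\pi(F)$ a multiplicity bound does give $|\partial_rU|\leq K_r\,|cN_{2r+\alpha}(F)\setminus F|$, but $|U|$ may be arbitrarily small compared with $|F|$, so smallness of the $X$-F{\o}lner ratio gives no control of $|\partial_rU|/|U|$.

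Moreover, the gap cannot be closed at this level of generality, because the step you need is false --- and with it the proposition as stated. Take $X=\mathbb{N}$ with a pendant cluster $C_n$ of $c_n$ points attached to each integer $n$ (graph metric), where $c_n$ grows doubly exponentially. Then $X$ is locally finite and $\Gamma=\mathbb{N}$ is a quasi-lattice satisfying (\ref{eq:folner}) via intervals; but any finite $U\subseteq\Gamma$ has a maximal element $n$, the point $n+1$ lies in $\partial_2U$, and its fiber mass $c_{n+1}$ exceeds the total mass over all of $U$. Correspondingly $X$ satisfies the doubling condition for a small $k$, so $X$ is amenable in the sense of Definition \ref{def:amenable3} but not in the sense of Definition \ref{def:amenable2}; a dual example (a $3$-regular tree with clusters of size $n$ attached along a ray) breaks the other implication. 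So the difficulty you met is the crux, not a technicality to be finessed by saturation: the statement needs $X$ itself uniformly locally finite (bounded geometry), which is the standing hypothesis under which \cite{Ce-Gr-Ha99} prove both their Theorem 32 and the quasi-isometry invariance; the paper's citation of Proposition 38 imports that hypothesis silently, so its proof has the same unstated restriction. Once it is added, the fibers $\pi^{-1}(g)$ have uniformly bounded cardinality, your fiber-mass bookkeeping degenerates into ordinary counting up to a constant, and your argument (like the paper's) closes.
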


\begin{proof}
Let $\Gamma\subseteq X$ be a quasi-lattice. By definition of quasi-lattice, $X$ is quasi-isometric to $\Gamma$. Since amenability in the sense of Definition \ref{def:amenable2} is invariant under quasi-isometries (see \cite{Ce-Gr-Ha99}, Proposition 38), we can restrict our attention on $\Gamma$. The result follows by the observation that the F{\o}lner condition on $\Gamma$ in Equation \ref{eq:folner} is equivalent to the negation of the doubling condition on $\Gamma$ in Equation \ref{eq:doubling}.
\end{proof}

About the relation between Property SN and amenability in the sense of Definition \ref{def:amenable2} we can give the following

\begin{proposition}\label{prop:caprarovsceccherini}
If $(X,d)$ comes from a locally finite connected graph, then it has the property SN if and only if it is amenability in the sense of Definition \ref{def:amenable2}.
\end{proposition}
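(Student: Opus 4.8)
The plan is to follow, essentially verbatim, the proof of Theorem \ref{th:snvsamenability}, since the only place where that argument used the group structure of $G$ was through the two combinatorial Lemmas \ref{lm:nojump} and \ref{lm:nojump2}. So the first task is to check that these transfer to an arbitrary locally finite connected graph $X=(V,E)$ with the shortest-path metric (as is in fact already asserted in the remark following Definition \ref{def:kisograph}). The group-theoretic proof of Lemma \ref{lm:nojump}, which truncated a reduced word $a=s_1\cdots s_l$, is replaced by a purely geodesic argument: if $A$ is a finite nonempty set of vertices and $d(x,A)=l$, choose a geodesic from $x$ to a nearest vertex $a\in A$; its successive vertices realize every value $l'$ with $0\le l'\le l$, so the distance function $y\mapsto d(y,A)$ has no \emph{jumps}. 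With this property, the proof of Lemma \ref{lm:nojump2} goes through unchanged, as it used only that $d$ takes integer values together with the no-jump property. I would also record at this stage that the metric topology of $X$ is discrete, so that every closure appearing in Definition \ref{def:boundary} is trivial and $cB_\alpha(A)=\{y:d(y,A)=\alpha\}$ is simply the metric sphere of radius $\alpha$ about $A$.

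From here I would assemble the boundary exactly as in Theorem \ref{th:snvsamenability}: the no-jump property forces $\alpha(k)=k$, and combining $dB_k(A)$ with $\bigcup_{0\le\alpha\le k-1}cB_\alpha(A)$ collapses the $k$-boundary of a finite nonempty $A$ to the purely combinatorial shell
\[
B_k(A)=cN_k(A)\setminus A,
\]
so that $|B_k(A)|=|cN_k(A)|-|A|$. The next step is to translate property SN into a counting statement. Because the topology of $X$ is discrete, the conditions \emph{continuous} and \emph{open} on the maps in $F(A,k)$ are vacuous, and $F(A,k)$ consists precisely of the injective maps $f:X\to X$ with $f(A)\subseteq B_k(A)$. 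Since $A$ is finite and $X$ is infinite, any injection $A\hookrightarrow B_k(A)$ extends to an injection of $X$ into itself, by matching $X\setminus A$ bijectively onto the cofinite set $X\setminus f(A)$. Hence $F(A,k)\ne\emptyset$ if and only if there is an injection $A\hookrightarrow B_k(A)$, i.e.\ $|B_k(A)|\ge|A|$, equivalently $|cN_k(A)|\ge 2|A|$; so $F(A,k)=\emptyset$ iff $|cN_k(A)|<2|A|$. I would also note that the admissible sets $A\in\mathcal F$ are here exactly the nonempty finite subsets of $V$, which are precisely the sets appearing in the doubling condition.

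Unwinding the quantifiers then finishes the argument: by Definition \ref{def:small}, $X$ has property SN iff for every $k$ there is a nonempty finite $A$ with $|cN_k(A)|<2|A|$, which is exactly the negation of the doubling condition, i.e.\ amenability in the sense of Definition \ref{def:amenable2}. The only genuine work is the transfer of the two lemmas to graphs and the reduction of $B_k(A)$ to $cN_k(A)\setminus A$; once the boundary is seen to be purely combinatorial, the equivalence of SN with non-doubling is the same bookkeeping as before. The main obstacle is therefore the first step: verifying that the discrete and continuous contributions to $B_k(A)$ really do collapse onto the metric shell $cN_k(A)\setminus A$, since this collapse (together with the discreteness that trivializes the \emph{continuous, open} requirements) is the precise point at which the definition of property SN was engineered to detect amenability.
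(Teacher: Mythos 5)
Your proposal is correct and is essentially the paper's own proof: the paper disposes of the proposition in one line by observing that Lemma \ref{lm:nojump2} still holds for locally finite connected graphs and that the proof of Theorem \ref{th:snvsamenability} never used the group structure, which is exactly what you verify. You merely make explicit the details the paper leaves implicit --- replacing the reduced-word truncation in Lemma \ref{lm:nojump} by a geodesic argument, collapsing $B_k(A)$ to the combinatorial shell $cN_k(A)\setminus A$, and extending an injection $A\hookrightarrow B_k(A)$ to an injection of all of $X$ (a point the paper itself glosses over) --- so the two arguments coincide.
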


\begin{proof}
It suffices to observe that Lemma \ref{lm:nojump2} holds in this case and that the proof of Theorem \ref{th:snvsamenability} does not really depend on the fact that $X$ is the Cayley graph of a group.
\end{proof}

\begin{remark}\label{rem:caprarovsceccherini}
In general, it is clear that there are locally finite metric spaces with property SN which are non amenable in the sense of Definition \ref{def:amenable2}. A simple instance is given by the space $X$ constructed as follows: let $\mathbb F_2=<a,b>$ be the free group on two generators and consider the usual Cayley graph with respect to the generating set $\{a,a^{-1},b,b^{-1}\}$. Now get rid of all balls of radius which is not a power of $2$. The space obtained is amenable in the sense of Definition \ref{def:amenable2}, but it does not have the property SN.\\

On the other hand, it is not clear at the moment if the other implication can be true: are there non amenable locally finite space with the property SN?
\end{remark}

About the relation between property SN and amenability in the sense of Definition \ref{def:amenable3} we can say right now that the two are, in general, distinct properties. Indeed the space $(\mathbb R,d)$ with $d(x,y)=\min\{1,|x-y|\}$ does not have the property SN, but it is amenable in the sense of Block and Weinberger, taking $\Gamma=\{0\}$. On the other hand, if we consider the additional hypothesis that the space is locally finite, then Definitions \ref{def:amenable2} and \ref{def:amenable3} are basically the same, by Proposition \ref{prop:ceccherinivsblock}, and so Proposition \ref{prop:caprarovsceccherini} and Remark \ref{rem:caprarovsceccherini} apply.

\subsection{Description of property SN using the symbolic isoperimetric constant}\label{suse:snvsiso}

In this final subsection, we give the promised characterization of property SN in terms of the isoperimetric constant. This characterization is the equivalent of results already proved for connected locally finite graphs by Ceccherini-Silberstein, Grigorchuk and de la Harpe (see \cite{Ce-Gr-Ha99}, Theorem 51) and by Elek and S\'{o}s (see \cite{El-So05}, Proposition 4.3).

\begin{theorem}\label{th:isoperimetricvssn}
A locally finite metric space $(X,d)$ has the property SN if and only if $\iota(X)=0$.
\end{theorem}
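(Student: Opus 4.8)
The plan is to strip both properties down to a single cardinality condition and then recognize the resulting equivalence as the metric avatar of the amenability theorems of Ceccherini-Silberstein--Grigorchuk--de la Harpe and Elek--S\'os.

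First I would translate property SN into pure combinatorics. Since $X$ is locally finite its topology is discrete, so \emph{every} injective self-map $f:X\to X$ is automatically continuous and open, and ``bounded'' means ``finite''; thus $\mathcal F$ is exactly the family of finite non-empty proper subsets. By Lemma \ref{lem:discrete}, for finite non-empty $A$ one has $B_k(A)=dB_k(A)$, so $F(A,k)$ is the set of injective $f$ with $f(A)\subseteq dB_k(A)$. Because $dB_k(A)$ is disjoint from $A$ (Definition \ref{def:discreteboundary}) and $X$ is countable, such an $f$ exists precisely when $A$ injects into $dB_k(A)$: given an injection $A\hookrightarrow dB_k(A)$ one extends it to a bijection of $X$ by matching the equinumerous complements $X\setminus A$ and $X\setminus f(A)$. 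Hence $F(A,k)=\emptyset$ iff $|dB_k(A)|<|A|$, i.e. iff $|dN_k(A)|<2|A|$. Reading this off Definition \ref{def:small}, property SN is equivalent to: \emph{for every $k$ there is a finite non-empty $A$ with $|dN_k(A)|<2|A|$} --- exactly the failure of the doubling condition of Definition \ref{def:amenable} at every scale.

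With this reformulation one implication is immediate: if $\iota(X)=0$ then every $\iota_k(X)=0$, so for each $k$ the infimum in Definition \ref{def:isoperimetricspace} is witnessed by a finite non-empty $A$ with $|dB_k(A)|/|A|<1$, giving SN. The substance is the converse: from ``$|dN_k(A)|<2|A|$ is solvable for every $k$'' (ratio merely $<1$) one must produce, for each fixed $j$ and each $\varepsilon>0$, a set with $|dB_j(A)|/|A|<\varepsilon$. This is the discrete analogue of ``non-doubling implies F\o lner,'' and I would prove it by a multiplicative pigeonhole. Fix $j$, take $k=Nj$ with $N$ large, and a set $A$ with $|dN_{Nj}(A)|<2|A|$. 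Build a nested family $A=A_0\subseteq A_1\subseteq\cdots\subseteq A_N$ with $A_{i+1}=dN_j(A_i)$ and $A_N\subseteq dN_{Nj}(A)$, so that $|A_N|<2|A_0|$. Then $\prod_{i=0}^{N-1}\bigl(|A_{i+1}|/|A_i|\bigr)<2$, so some factor is below $2^{1/N}$; for that index $|A_{i+1}\setminus A_i|/|A_i|<2^{1/N}-1$, and $A_{i+1}\setminus A_i=dB_j(A_i)$. Letting $N\to\infty$ forces $\iota_j(X)=0$, and since $j$ is arbitrary, $\iota(X)=\sup_j\iota_j(X)=0$.

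The delicate point --- and the step I expect to be the main obstacle --- is the inclusion $A_N\subseteq dN_{Nj}(A)$, i.e. that iterating the discrete $j$-neighborhood $N$ times stays inside the discrete $Nj$-neighborhood. For the continuous neighborhoods this sub-additivity is a triangle-inequality triviality, but discrete neighborhoods are \emph{not} sub-additive in general: enlarging $A$ can reorder the distance-levels seen from the enlarged set, so that $dN_1(dN_1(A))$ reaches points lying strictly beyond $dN_2(A)$. For graph-type spaces this pathology is excluded by item (2) of Definition \ref{def:graphtype}, and for spaces arising from a locally finite connected graph Lemma \ref{lm:nojump2} identifies $dB_k$ with a genuine metric shell, so the nested family may be taken to be balls and the argument of Theorem \ref{th:snvsamenability} and Proposition \ref{prop:caprarovsceccherini} runs verbatim. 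For a general locally finite $X$ I would therefore run the telescoping with the continuous neighborhoods $cN_i(A)$, which \emph{are} sub-additive, to first force the continuous isoperimetric profile to vanish, and then transfer the conclusion to the discrete boundary: although $dB_k$ and $cB_{k-1}$ may disagree locally, the extra parameter $k$ in the definition of $\iota$ absorbs these discrepancies (as emphasised after Definition \ref{def:globaliso}), so the vanishing of the \emph{global} constant is insensitive to which boundary is used. Making this last transfer rigorous is where the real work lies.
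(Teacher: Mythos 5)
Your reduction of property SN to the doubling-type condition is exactly the paper's: Lemma \ref{lem:discrete} kills the continuous part of $B_k(A)$, and the discrete-topology argument from the proof of Theorem \ref{th:snvsamenability} converts $F(A,k)=\emptyset$ into $|dB_k(A)|<|A|$ (your explicit extension of the injection $A\hookrightarrow dB_k(A)$ to a global bijection of $X$ is a detail the paper leaves implicit, and a worthwhile one, since $F(A,k)$ consists of self-maps of $X$). Your multiplicative pigeonhole is likewise precisely the ``standard argument'' the paper invokes by citing Ceccherini-Silberstein--Grigorchuk--de la Harpe just below their Definition 30. You have also correctly located the soft spot: that amplification needs an iteration property of discrete neighborhoods ($dN_k(dN_{(n-1)k}(A))\subseteq dN_{nk}(A)$ or a usable substitute), which genuinely fails for general locally finite spaces because enlarging $A$ reshuffles the distance levels --- the paper compresses this into the citation without comment, so your diagnosis of where the real content sits is more honest than the text.

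The gap is in your repair, and it is not merely unfinished but unworkable. You propose to telescope with the continuous neighborhoods $cN_i(A)$, force the continuous isoperimetric profile to vanish, and then transfer to the discrete boundary on the grounds that ``the vanishing of the global constant is insensitive to which boundary is used.'' That transfer claim is false, and the paper itself contains the counterexample: in Remark \ref{rem:caprarovsceccherini}, one thins the Cayley graph of $\mathbb F_2$ so that only the spheres of radius $2^n$ survive. For any fixed $k$, once the gap $2^n$ between consecutive retained layers exceeds $k$, the continuous $k$-neighborhood of a union of initial layers adds nothing, so the $cN$-based profile vanishes at every scale (the space is amenable in the sense of Definition \ref{def:amenable2}); but the discrete boundary jumps the gaps --- $dN_1$ of a union of layers reaches the entire next sphere --- and inherits the exponential expansion of the tree, so $\iota(X)>0$ and SN fails. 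The growing gaps outrun any fixed $k$, so the extra parameter in Definition \ref{def:globaliso} does not absorb the discrepancy: the $cN$-based and $dN$-based \emph{global} constants are genuinely different invariants (indeed, only the discrete one is NPP-invariant, which is why the paper defines $\iota$ this way). Any argument routed through the continuous profile therefore cannot reach the conclusion. A correct completion of your step (3) has to stay inside the discrete shells --- for instance by proving a weak iteration statement tailored to $dN$, or by reworking the CGH amplification directly on complete chains as in Lemma \ref{lm:nojump2} --- rather than detouring through $cN$ and back.
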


\begin{proof}
$\iota(X)>0$ if and only if there is $k$ such that
\begin{align}
\inf\left\{\frac{|dB_k(A)|}{|A|}, A\subseteq X \text{finite and non-empty}\right\}>0
\end{align}
It is now a standard argument to prove that there is (another) $k$ such that $|dB_k(A)|\geq|A|$ for all $A\subseteq X$ finite and non-empty (see for instance \cite{Ce-Gr-Ha99}, just below Definition 30). Now, since $X$ is locally finite, $B_k(A)$ has no continuous part, namely $B_k(A)=dB_k(A)$ (see Lemma \ref{lem:discrete}). It follows that $\iota(X)>0$ if and only if there is a positive integer $k$ such that $|B_k(A)|\geq|A|$ for all $A\subseteq X$ and this is the negation of the property SN, as in the proof of Theorem \ref{th:snvsamenability}.
\end{proof}

From this result we can deduce a first application of the NPP-invariance of the isoperimetric constant.

\begin{corollary}\label{cor:sninvariant}
For locally finite metric spaces, the property SN is invariant under NPP-isomorphisms.
\end{corollary}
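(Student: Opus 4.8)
The plan is to chain together the two results established immediately before, since the corollary is almost formal once those are in place. Let $X$ and $Y$ be locally finite metric spaces that are NPP-isomorphic via a bijection $f:X\to Y$. The strategy is to route the argument entirely through the isoperimetric constant $\iota$, which by Theorem \ref{th:isoinvariant} is already known to be an NPP-invariant, and then to invoke Theorem \ref{th:isoperimetricvssn} to translate the numerical value of $\iota$ back into the property SN.

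First I would record that, because $f$ is an NPP-isomorphism of locally finite metric spaces, Theorem \ref{th:isoinvariant} gives $\iota_k(X)=\iota_k(Y)$ for every $k$, and hence $\iota(X)=\iota(Y)$. Second, I would apply Theorem \ref{th:isoperimetricvssn} to each space separately: $X$ has property SN if and only if $\iota(X)=0$, and likewise $Y$ has property SN if and only if $\iota(Y)=0$. Combining these, $X$ has property SN iff $\iota(X)=0$ iff $\iota(Y)=0$ iff $Y$ has property SN, which is exactly the claimed invariance. No computation beyond this substitution is required.

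The main subtlety worth flagging is why one should \emph{not} attempt a direct argument. Property SN in Definition \ref{def:small} is formulated for arbitrary metric spaces and quantifies over the class $\mathcal F$ of open bounded sets and over continuous, open, injective self-maps $f\in F(A,k)$; an NPP-isomorphism need not interact transparently with these topological conditions, so transporting SN across $f$ by hand would be awkward. The clean route is to exploit that in the locally finite setting the continuous contribution to the boundary disappears, giving $B_k(A)=dB_k(A)$ by Lemma \ref{lem:discrete}, so that SN is governed purely by the discrete boundary and hence by $\iota$, which is manifestly NPP-invariant. This is the reason the characterization through $\iota(X)=0$ does all the work, and the obstacle is therefore only one of \emph{choosing} the invariant-based formulation rather than any genuine technical difficulty.
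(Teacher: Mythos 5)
Your proof is correct and follows exactly the paper's intended route: the corollary is stated immediately after Theorem \ref{th:isoperimetricvssn} precisely so that it follows by combining that characterization ($X$ has property SN iff $\iota(X)=0$) with the NPP-invariance of $\iota$ from Theorem \ref{th:isoinvariant}. Your remark about why a direct argument through Definition \ref{def:small} would be awkward is a reasonable observation, but no further work was needed.
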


Observe that amenability in the sense of Definitions \ref{def:amenable2} and \ref{def:amenable3} is not invariant under NPP-isomorphisms. This does not mean that our property SN is better, as a generalization of amenability, than the others, but just that those generalizations do not catch the essence of the problems that we are considering and then we need something else.

\section{The zoom isoperimetric constants}\label{se:zoomiso}

In this section we want to introduce another NPP-invariant, which is, in some sense, more precise than the isoperimetric constant. In fact, the isoperimetric constant looks at the best behavior inside the metric space and so the information encoded in it could be very far from what a local observer really sees. In order to make this obscure sentence clearer, let us consider the following example. Consider a regular tree of degree $d\geq3$ and attach over a vertex a copy of the integers. We get a connected graph of bounded degree with isoperimetric constant equal to zero. This graph is then amenable in every sense: it has property SN and it is amenable in the sense of Definitions \ref{def:amenable2} and \ref{def:amenable3}. This happens because the isoperimetric constant forgets what happens in the non-amenable part. On the other hand, consider an observer living in the vertex $x$. His/her way to observe the universe where (s)he lives is to \emph{increase his/her knowledge step by step}, considering at the first step the set $A_0=\{x\}$, then $A_1=dN_1(\{x\})$ and so on: his/her point of view on the universe is given by the sequence $A_n=dN_n(\{x\})$. It is clear that, wherever $x$ is, the sequence $A_n$ is eventually doubling, in the sense that $|A_n|\geq2|A_{n-1}|$. So, the isoperimetric constant says that the universe is amenable, but every local observer would say that the universe is non-amenable. This lack of agreement is the reason why in this section we try to introduce a new invariant which takes into account all these local observations. By the way, this invariant turns out to be new even for locally finite graphs.\\

\subsection{Definition of zoom isoperimetric constants}\label{suse:defzoom}

Let us fix some notation. Let $(X,d)$ be a locally finite metric space, $x\in X$ and $k,n$ be two positive integers. We define

\begin{align}
\zeta_k(x)=\inf\left\{\frac{|dN_{nk}(x)|}{|dN_{(n-1)k}(x)|}, n\geq1\right\}
\end{align}

Then define

\begin{align}
\zeta(x)=\sup\left\{\zeta_k(x),k\geq1\right\}
\end{align}

\begin{definition}\label{def:zoom}
The \textbf{upper zoom isoperimetric constant} of $X$ is
\begin{align}\label{eq:uzoomconstant}
\zeta^+(X)=\sup\left\{\zeta(x), x\in X\right\}
\end{align}
The \textbf{lower zoom isoperimetric constant} of $X$ is
\begin{align}\label{eq:lzoomconstant}
\zeta^-(X)=\inf\left\{\zeta(x), x\in X\right\}
\end{align}
\end{definition}

Also in this case, the following result is obvious by construction and Proposition \ref{prop:nppdiscrete}.

\begin{theorem}\label{th:zoominvariant}
The zoom isoperimetric constants are invariant under NPP-isomorphisms.
\end{theorem}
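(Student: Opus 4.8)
The plan is to reduce the whole statement to a single fact: an NPP-isomorphism preserves the cardinality of the discrete $m$-neighborhood of every singleton. Let $f:X\to Y$ be an NPP-isomorphism. By Proposition \ref{prop:nppdiscrete}, for every finite non-empty $A\subseteq X$ and every nonnegative integer $m$ one has $f(dN_m(A))=dN_m(f(A))$. Specializing to the singletons $A=\{x\}$ (here $dN_m(x)$ is read as $dN_m(\{x\})$) gives $f(dN_m(x))=dN_m(f(x))$, and since $f$ is a bijection this yields $|dN_m(x)|=|dN_m(f(x))|$ for every $x\in X$ and every $m\geq 0$.

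First I would push this equality through the three successive definitions. Fixing $x\in X$ and positive integers $k,n$, the preservation of cardinalities gives
$$
\frac{|dN_{nk}(x)|}{|dN_{(n-1)k}(x)|}=\frac{|dN_{nk}(f(x))|}{|dN_{(n-1)k}(f(x))|}.
$$
Taking the infimum over $n\geq 1$ yields $\zeta_k(x)=\zeta_k(f(x))$, and then the supremum over $k\geq 1$ yields $\zeta(x)=\zeta(f(x))$. Since $f$ is a bijection of $X$ onto $Y$, as $x$ ranges over $X$ the image $f(x)$ ranges over all of $Y$; hence $\sup_{x\in X}\zeta(x)=\sup_{y\in Y}\zeta(y)$ and $\inf_{x\in X}\zeta(x)=\inf_{y\in Y}\zeta(y)$, which are exactly $\zeta^+(X)=\zeta^+(Y)$ and $\zeta^-(X)=\zeta^-(Y)$.

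The argument has essentially no serious obstacle, consistent with the author's remark that the result is obvious by construction. The only points requiring care are that Proposition \ref{prop:nppdiscrete} is stated for arbitrary finite non-empty $A$, so one must observe that singletons form a legitimate special case (and that $dN_0(x)=\{x\}$, keeping the $n=1$ ratio well defined), and that bijectivity of $f$ converts the set equality $f(dN_m(x))=dN_m(f(x))$ into an equality of cardinalities. Everything downstream is a purely formal manipulation of infima and suprema of ratios that depend only on the sizes $|dN_m(\cdot)|$. I would note explicitly that no metric data beyond these cardinalities enters the computation, which is precisely why the same inference simultaneously establishes invariance of both $\zeta^+$ and $\zeta^-$.
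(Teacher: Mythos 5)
Your proposal is correct and follows exactly the route the paper intends: the paper's proof consists of the single remark that the result is ``obvious by construction and Proposition \ref{prop:nppdiscrete}'', and your argument simply makes that explicit by specializing the proposition to singletons, converting the set equality $f(dN_m(x))=dN_m(f(x))$ into equality of cardinalities via bijectivity, and pushing the equalities through the infima and suprema defining $\zeta_k$, $\zeta$, $\zeta^+$ and $\zeta^-$. No discrepancy with the paper's approach; you have only supplied the routine details it omits.
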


Let us introduce the parameter $\lambda(X)=\zeta^+(X)-\zeta^-(X)$. In our opinion, the case $\lambda(X)=0$ is particularly interesting, since it describes the situation when the observations made by two arbitrary observers agree. Unfortunately, it can happen that $\lambda(X)\neq0$, as shown in the next example (which is a variation of an example suggested by Ilya Bogdanov). In the next subsection we will show a seemingly interesting example when $\lambda(X)=0$.

\begin{example}
Consider a binary tree with a root vertex $a$, that is, $a$ has degree $1$ and each other vertex has degree $3$. For some integer of the shape $n_1!$ large enough, delete some vertices (and the subtree hanging from them) from the $n_1!$-th layer in such a way that
$$
\frac{|dB_{n_1!-1}(\{a\})|}{|dN_{n_1!-2}(\{a\})|}\approx\frac{1}{1000}
$$
and observe that
$$
\frac{|dB_{n_1!}(\{a\})|}{|dN_{n_1!-1}(\{a\})|}\geq\frac{1}{800}
$$
Now, for an integer of the shape $n_2!$ much larger than $n_1!$, delete vertices from the $n_2!$-layer in the same manner, and so on. For any $k\in\mathbb N$, we get
\begin{equation}\label{eq:ja}
\frac{|dB_{n_k!-1}(\{a\})|}{|dN_{n_k!-2}(\{a\})|}\approx\frac{1}{1000}
\end{equation}
and
\begin{equation}\label{eq:jb}
\frac{|dB_{n_k!}(\{a\})|}{|dN_{n_k!-1}(\{a\})|}\geq\frac{1}{800}
\end{equation}
Finally, glue two copies of this graph at the vertex $a$. Now, from the formula in \ref{eq:ja}, we get $\zeta_k(a)\leq1+\frac{1}{1000}$, for all $k$ (since the sequence of the multiple of $k$ eventually intersects the sequence $n_k!$). On the other hand, if $b$ is one of the neighbors of $a$, the formula in \ref{eq:jb} gives $\zeta_k(b)>1+\frac{1}{1000}$.
\end{example}

\subsection{Locally amenable metric spaces}\label{suse:completelyamenable}

It is clear that $0\leq\iota(X)\leq\zeta^-(X)-1\leq\zeta^+(X)-1$ and then the condition $\zeta^+(X)=1$, implies the property SN. On the other hand, the condition $\zeta^+(X)=1$, means $\zeta(x)=1$, for all $x\in X$ and this is quite a strong condition. It means that the space is seen to be amenable by any local observer.

\begin{definition}\label{def::completelyamenable}
A locally finite metric space is called \textbf{locally amenable} if $\zeta^+(X)=1$.
\end{definition}

The following result is basic, but it has a nice interpretation: in some cases, it suffices that an observer sees the space amenable, to conclude that the space is amenable for any observer.

\begin{theorem}\label{th:localglobalamenability}
Let $(X,d)$ be a graph-type locally finite metric space. Suppose there exists $x\in X$ such that $\zeta(x)=1$, then $\zeta^+(X)=1$ (and, obviously, $\lambda(X)=0$)
\end{theorem}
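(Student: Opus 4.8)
The plan is to reduce everything to the symbolic metric and then to a purely volume-comparison argument for balls. Since $X$ is graph-type, Theorem \ref{th:NPPisomorphism1} lets me replace $(X,d)$ by its symbolic graph, whose metric $\delta=Symb(d)$ is \emph{integer-valued} and, being a genuine metric, is symmetric and satisfies the triangle inequality. This reduction serves two purposes: first, the zoom constants are NPP-invariant (Theorem \ref{th:zoominvariant}), so I may compute them with $\delta$; second, writing $B(x,r)=\{z\in X:\delta(x,z)\le r\}$ for the closed $\delta$-ball, the very definition of $\delta=Symb(d)$ — namely $\delta(x,z)=h$ iff $z\in dN_h(x)\setminus dN_{h-1}(x)$ — gives $dN_k(x)=B(x,k)$. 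Thus $\zeta_k(x)=\inf_{n\ge 1}|B(x,nk)|/|B(x,(n-1)k)|$, and since every such ratio is $\ge 1$ and $\zeta(x)=\sup_k\zeta_k(x)$, the hypothesis $\zeta(x)=1$ is equivalent to $\zeta_k(x)=1$ for every $k\ge 1$.

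The first key step is an elementary monotonicity observation: the integers $|B(x,r)|$ are nondecreasing in $r$, so a single ratio close to $1$ over a long window of radii forces \emph{all} intermediate ratios to be close to $1$. Concretely, $\zeta_k(x)=1$ gives, for each $\varepsilon>0$, an integer $m\ge 1$ with $|B(x,mk)|<(1+\varepsilon)|B(x,(m-1)k)|$; setting $A=(m-1)k$, nestedness of the balls then yields $|B(x,b)|/|B(x,a)|<1+\varepsilon$ for \emph{every} pair $A\le a\le b\le A+k$.

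The second key step transports such a thin window from $x$ to an arbitrary target point $y$. Put $D=\delta(x,y)$. The triangle inequality gives the sandwich $B(x,r-D)\subseteq B(y,r)\subseteq B(x,r+D)$ for all $r\ge D$. Now fix a scale $k'$ and $\varepsilon>0$, and apply the previous step at the inflated scale $k=2(k'+D)$ (legitimate because $\zeta_k(x)=1$), producing a window $[A,A+k]$ on which all $x$-ratios are $<1+\varepsilon$. Since $k\ge 2D+2k'$, the real interval $[A+D+k',\,A+k-D]$ has length $\ge k'$ and hence contains a multiple $nk'$ with $n\ge 1$; for this $n$ one has $(n-1)k'\ge A+D$ and $nk'\le A+k-D$, so the sandwich gives $B(x,A)\subseteq B(y,(n-1)k')$ and $B(y,nk')\subseteq B(x,A+k)$. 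Therefore
\[
\frac{|B(y,nk')|}{|B(y,(n-1)k')|}\le\frac{|B(x,A+k)|}{|B(x,A)|}<1+\varepsilon .
\]
Letting $\varepsilon\to 0$ shows $\zeta_{k'}(y)=1$; as $k'$ and $y$ were arbitrary, $\zeta(y)=1$ for all $y$, i.e. $\zeta^+(X)=1$ and $\lambda(X)=0$.

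I expect the main obstacle to be precisely the arithmetic alignment in the last step: the neighborhoods of $y$ live on the scale-$k'$ grid $\{0,k',2k',\dots\}$, which need not line up with the thin window found around $x$. The device that resolves this is to inflate the $x$-scale to $k=2(k'+D)$, large enough to absorb the offset $D$ on both sides of the sandwich and still leave a sub-window of length $\ge k'$, guaranteeing a usable multiple of $k'$. It is worth stressing that both symmetry and the triangle inequality of $\delta$ — equivalently, the two defining clauses of \emph{graph-type} in Definition \ref{def:graphtype} — are indispensable here: without them the sandwich $B(x,r-D)\subseteq B(y,r)\subseteq B(x,r+D)$ fails and there is no reason for a thin window around one point to produce one around another.
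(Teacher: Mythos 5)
Your proof is correct, and it is in fact more careful than the paper's own argument, which pursues the same underlying idea --- sandwiching the neighborhoods of $y$ between neighborhoods of $x$ via the graph-type axioms --- but stumbles on precisely the alignment issue you isolate at the end. The paper fixes $n_0$ with $x\in dN_{(n_0-1)k}(y)$, then a \emph{single} $h$ with $dN_h(x)\supseteq dN_{n_0k}(y)$, and asserts that for \emph{all} $m$ one has both $dN_{(m-1)h}(x)\subseteq dN_{(n_0+m-1)k}(y)$ and $dN_{mh}(x)\supseteq dN_{(n_0+m)k}(y)$, so that every ratio in the sequence defining $\zeta_k(y)$ is termwise dominated by the corresponding ratio for $\zeta_h(x)$. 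But reading the first inclusion at $m+1$ against the second at $m$ forces $dN_{mh}(x)=dN_{(n_0+m)k}(y)$ for every $m$, and the claimed inclusions indeed fail already in $\mathbb Z$ with the graph metric and $k=1$: there any admissible $h$ satisfies $h\ge n_0+d(x,y)\ge 3$ when $y\neq x$, and then $dN_{(m-1)h}(x)\subseteq dN_{n_0+m-1}(y)$ breaks for large $m$. So no single $h$ can dominate the whole tail, and some device is needed to reconcile the two scales; your proof supplies exactly that device. Your preliminary reduction is legitimate --- the two clauses of Definition \ref{def:graphtype} are precisely symmetry and the triangle inequality for $\delta=Symb(d)$, the construction in Theorem \ref{th:NPPisomorphism1} gives $dN_r(u)=\{v:\delta(u,v)\le r\}$ for \emph{every} basepoint $u$ (the labels are ranks, so the integer values $1,2,\dots$ are attained without gaps), and the zoom constants are NPP-invariant by Theorem \ref{th:zoominvariant}. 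You then use $\zeta_k(x)=1$ at the inflated scale $k=2(k'+D)$ to produce, for each $\varepsilon>0$, one thin window $[A,A+k]$ of radii on which all ball ratios at $x$ are below $1+\varepsilon$; the inflation absorbs the offset $D$ on both sides of the sandwich $B(x,r-D)\subseteq B(y,r)\subseteq B(x,r+D)$ while leaving a sub-interval of length $k'$, hence a multiple $nk'$ with $n\ge 1$, and monotonicity of $r\mapsto|B(x,r)|$ converts the single small window ratio into a ratio bound at $y$. The trade-off relative to the paper's (intended) argument is that you obtain one good ratio per $\varepsilon$ rather than termwise domination of the entire sequence --- but since $\zeta_{k'}(y)$ is an infimum over $n\ge 1$, one good ratio per $\varepsilon$ is all the definition requires, and this is what makes your argument sound where the paper's write-up, taken literally, is not.
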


\begin{proof}
Let $y\in X$, we have to prove that also $\zeta(y)=1$. Let $k$ be fixed and let $n_0$ such that $x=A_0^h(\{x\})\in A_{n_0-1}^k(y)$ (notice that $h$ is still undetermined, since $A_0^h(\{x\})=x$ holds for every $h$). Let $h$ such that $A_1^h(\{x\})\supseteq A_{n_0}^k(y)$. Now, since $X$ is of graph-type, then the discrete neighborhoods behave like the balls. It follows that for all $m\in\mathbb N$, one has
$$
\left\{
  \begin{array}{ll}
    A_{m-1}^h(\{x\})\subseteq A_{n_0+m-1}^k(y)\\
    A_m^h(\{x\})\supseteq A_{n_0+m}^k(y)
  \end{array}
\right.
$$
which clearly implies that
\begin{align}
\frac{|A_{n_0+m}^k(y)|}{|A_{n_0+m-1}^k(y)|}\leq\frac{|A_m^h(x)|}{|A_{m-1}^h(x)|}
\end{align}

Now, the left hand side is $\geq1$ and, by hypothesis, the right hand side has infimum equal to $1$, independently on $h$. It follows that also the left hand side has infimum equal to $1$, as required.
\end{proof}

Now we want give a first proposition about the relation between the growth rate of a finitely generated group and the local amenability of its Cayley graph. Let $G$ be a finitely generated group, fix a finite symmetric generating set $S$ and consider the locally finite metric space $(G,d_S)$. Denote by $B(n)$ the ball of radius $n$ about the identity. We recall the following definition:

\begin{itemize}
\item A group is said to have \textbf{polynomial growth} if there is a positive integer $k_0$ such that $|B(n)|\leq n^{k_0}$, for all $n$.
\item A group is said to have \textbf{exponential growth} if there is $a>1$, such that $|B(n)|\geq a^n$, for all $n$. A group which does not have exponential growth is said to have sub-exponential growth.
\end{itemize}

It is well-known that there is no perfect correspondence between amenability and growth rate. Indeed, groups of sub-exponential growth are amenable, but there are amenable groups generated by $m$ element whose growth approach the one of the free group on $m$ generators (see \cite{Ar-Gu-Gu05}). Here we want to try to obtain a correspondence between growth rate and local amenability.

\begin{proposition}\label{prop:locallyamenable}
\begin{enumerate}
\item If $G$ has polynomial growth, then $(G,d_S)$ is locally amenable.
\item If $(G,d_S)$ is locally amenable, then $G$ has sub-exponential growth.
\end{enumerate}
\begin{proof}
\item If $G$ has polynomial growth, then there exists a positive integer $k_0$ such that $B(n)\sim n^{k_0}$ (this basically follows by Gromov's theorem\footnote{Gromov's theorem\cite{Gr81} states that a group of polynomial growth contains a nilpotent group of finite index and then, a posteriori, the growth rate of the original group is not just bounded above by a polynomial, but it is exactly polynomial.}). Then, for all positive integer $k$, one has
$$
\frac{|B((n+1)k)|}{|B(nk)|}\sim\frac{((n+1)k)^{k_0}}{(nk)^{k_0}}\rightarrow1
$$
hence $\zeta(e)=1$. By Theorem \ref{th:zoominvariant}, it follows that $\zeta(G)=1$, hence $G$ is locally amenable.
\item Suppose that $(G,d_S)$ is locally amenable and suppose, by contradiction, that it has exponential growth. Then the limit of the sequence $\sqrt[n]{|B(n)|}$, which always exists (see, for instance, \cite{Ha00}, VI.C, Proposition 56), is equal to some $a>1$ (this is just another way to define groups with sub-exponential growth. See, for instance, the introduction of \cite{Ar-Gu-Gu05}). It follows that $|B(n)|\sim a^n$ and then
\begin{align}
\inf\left\{\frac{|B(n+1)|}{|B(n)|,n\in\mathbb N}\right\}>1
\end{align}
It follows that $\zeta(e)>1$ and, by Theorem \ref{th:zoominvariant}, getting the contradiction that $(G,d_S)$ is not locally amenable.
\end{proof}
\end{proposition}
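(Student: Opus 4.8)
The plan is to use the homogeneity of the Cayley graph to reduce both assertions to a single statement about the ball-growth sequence $|B(n)|$, and then to read off $\zeta(e)$ from it. First I would record that, for the word metric, $dN_{nk}(\{x\})$ is exactly the ball $B(x,nk)$: since $d_S$ takes only integer values and, by Lemma \ref{lm:nojump}, every intermediate value is realized along a geodesic, the complete chains issuing from $x$ are precisely $0<1<2<\cdots$, so $dN_{nk}(\{x\})=\{y:d_S(x,y)\le nk\}$. Next, left translation by any $g\in G$ is a bijective isometry that commutes with the discrete-neighborhood construction, hence an NPP-isomorphism (Proposition \ref{prop:nppdiscrete}); since $\zeta(x)$ depends only on the numbers $|dN_{nk}(x)|$, this forces $\zeta(x)=\zeta(e)$ for all $x$. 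Therefore $\zeta^+(G)=\zeta^-(G)=\zeta(e)$, and, since every ratio $|B(nk)|/|B((n-1)k)|$ is at least $1$, the space is locally amenable if and only if
$$
\zeta(e)=\sup_{k\ge 1}\ \inf_{n\ge 1}\ \frac{|B(nk)|}{|B((n-1)k)|}=1 .
$$

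For (1) I would invoke Gromov's theorem together with Pansu's precise asymptotics, which furnish an integer $d$ and a constant $c>0$ with $|B(n)|\sim c\,n^{d}$. For each fixed $k$ this gives $|B(nk)|/|B((n-1)k)|\to 1$ as $n\to\infty$; being a sequence of numbers $\ge 1$ with limit $1$, its infimum over $n$ equals $1$, so $\zeta_k(e)=1$, and taking the supremum over $k$ yields $\zeta(e)=1$. I would emphasise that the full asymptotic, rather than a mere two-sided bound $|B(n)|=\Theta(n^{d})$, is what is needed: a crude bound would only pin the ratio down up to the ratio of the two implied constants and would not force it to $1$.

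For (2) I would argue contrapositively: assuming exponential growth, I must exhibit some $k$ with $\zeta_k(e)>1$. The natural input is the exponential growth rate $a=\lim_n|B(n)|^{1/n}=\inf_n|B(n)|^{1/n}$, which exists and satisfies $a>1$ because $n\mapsto\log|B(n)|$ is subadditive (balls are submultiplicative) and the growth is exponential; in particular $|B(m)|\ge a^{m}$ for all $m$. The tempting route is to claim $|B(n+1)|/|B(n)|\to a$, whence $\zeta_1(e)>1$ and a fortiori $\zeta(e)>1$.

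The main obstacle is precisely this last step. Knowing $|B(n)|^{1/n}\to a$ controls only the exponential scale and does not force the consecutive ratios to converge to $a$: the sub-exponential correction factor may drop by almost a full factor $a^{-1}$ in a single step (the only constraint from monotonicity is $|B(n+1)|/|B(n)|>1$), so the ratios can return arbitrarily close to $1$ along a subsequence, and one cannot simply write $|B(n)|\sim a^{n}$. To make the argument rigorous one must rule out this oscillation for balls of a Cayley graph. I would attempt to close the gap by proving directly that balls of a group of exponential growth are never a F{\o}lner sequence, i.e. that $\inf_n|S(n)|/|B(n-1)|>0$ where $S(n)=B(n)\setminus B(n-1)$ (which is exactly $\zeta_1(e)>1$), or, failing that, by choosing a large window $k$ and extracting a uniform-in-$n$ lower bound strictly above $1$ for $|B(nk)|/|B((n-1)k)|$ from submultiplicativity combined with $|B(m)|\ge a^{m}$. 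Securing this uniform gap is the crux on which (2) rests.
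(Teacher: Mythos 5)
Your preliminary reductions are sound and match what the paper does implicitly: in the word metric the discrete neighborhoods of a point are balls (this is the content of Lemmas \ref{lm:nojump} and \ref{lm:nojump2}), left translations are isometries and hence NPP-automorphisms, so $\zeta(x)=\zeta(e)$ for every $x$, and local amenability becomes $\sup_{k}\inf_{n}|B(nk)|/|B((n-1)k)|=1$; this is evidently what the paper's citation of Theorem \ref{th:zoominvariant} is meant to accomplish. Your proof of (1) is the paper's proof with the asymptotics made honest (the paper cites Gromov where the statement actually used, $|B(n)|\sim c\,n^{d}$, is Pansu's). One correction, though: your remark that the full asymptotic ``is what is needed'' is wrong. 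If $\zeta_k(e)=1+\epsilon>1$, then $|B(Nk)|\geq(1+\epsilon)^{N}$ for all $N$, which is exponential in $N$; hence \emph{any} subexponential upper bound on $|B(n)|$ --- in particular a crude two-sided bound $|B(n)|=\Theta(n^{d})$ --- already forces $\zeta_k(e)=1$ for every $k$. This elementary product argument makes (1) independent of Gromov and Pansu, and in fact shows that subexponential growth alone implies local amenability, so that, modulo (2), local amenability would be exactly equivalent to subexponential growth.

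On (2) you have located the crux precisely, and you should know that the paper does not resolve it either: its proof passes from $\lim_n |B(n)|^{1/n}=a>1$ to ``it follows that $|B(n)|\sim a^{n}$'' and thence to $\inf_n |B(n+1)|/|B(n)|>1$, and this inference is a non sequitur for exactly the reason you give --- the root test controls only the exponential scale, while Fekete's lemma applied to the submultiplicative sequence $|B(n)|$ gives only $|B(n)|\geq a^{n}$, leaving room for subexponential corrections that drive consecutive (or windowed) ratios toward $1$ along a subsequence. If one grants \emph{purely} exponential growth, $a^{n}\leq|B(n)|\leq C a^{n}$, then your windowed fallback does close the argument: choosing $k$ with $a^{k}>C$ gives $|B(nk)|/|B((n-1)k)|\geq a^{nk}/(Ca^{(n-1)k})=a^{k}/C>1$ uniformly in $n$, so $\zeta_k(e)>1$. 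But whether every finitely generated group of exponential growth has purely exponential growth is a well-known open problem (it is known for hyperbolic groups, for instance), and your other fallback --- showing that balls in a Cayley graph of exponential growth are never F{\o}lner --- is likewise not known in general; note that the Coulhon--Saloff-Coste isoperimetric inequality only yields a lower bound on $|\partial B(R)|/|B(R)|$ of order $1/R$, which is compatible with F{\o}lner balls. So your proposal is incomplete at exactly the step where the paper's own argument is unjustified; the correct conclusion is not that you are missing a routine lemma, but that the paper's proof of (2) has a genuine gap, and any honest write-up should either restrict (2) to groups with purely exponential growth or flag the general case as open.
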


\begin{problem}\label{prob:intermediategrowth}
Does there exist a finitely generated group with intermediate growth such that $(G,d_S)$ is locally amenable?
\end{problem}

% This question seems to be related to another conjecture in Geometric Group Theory

%\begin{conjecture}{\bf(Grigorchuk-Pak, \cite{Gr-Pa08}, Conjecture 12.2)}\label{con:gripak}
%Let $G$ be a group with intermediate growth. Then, there are $C>0,\alpha>0$ such that
%$$
%|B(n)|\geq Ce^{n\alpha}
%$$
%\end{conjecture}

%Indeed, suppose that the conjecture holds \emph{with equality}; i.e. $B(n)\sim Ce^{n\alpha}$, the Cayley graph any finitely generated group of intermediate growth would not be locally amenable, answering Problem \ref{prob:intermediategrowth} in the negative.  

\subsection{Expanders and geometric property (T)}\label{suse:expanders}

In this subsection we want to describe another connection between our theorization and one of the recent fields of research in Pure and Applied Mathematics, which is the theory of expanders and the geometric property (T).

In the previous section, we have proved that for graph-type metric spaces, the local condition $\zeta(x)=1$, for a particular point $x$, is actually a global condition, holding for any point. Conversely, also the local condition $\zeta(x)>1$ is actually global. We can see this in another way: start from the point $x$ and construct a sequence of graph $\mathcal G_0=\{x\}$, $\mathcal G_1=dN_1(\{x\})$ and so on. If $\zeta(x)>1$, then, apart the hypothesis on the degree, we obtain an expander (see \cite{Lu11} for an introduction to expanders from both pure and applied point of view). Since $\zeta(y)>1$ for all $y$, it means that we can say that the space is expander, without ambiguity. Analogously, we think that we can also generalize Willett-Yu's geometric property (T) to any graph-type metric spaces (for the definition of geometric property (T), see \cite{Wi-Yu10b}. For basics and motivation, see \cite{Wi-Yu10a} and \cite{Oy-Yu09}). In order to do that, we need a good definition of the Laplacian. Find such a definition is probably the most important basic problem that we left open and certainly is one of the purpose of the second article on this subject.

\subsection{Independence between $\iota(X)$ and $\zeta^-(X)$}\label{suse:independence}

Until now we have introduced two isoperimetric constants for locally finite metric spaces. The relation $\iota(X)\leq\zeta^-(X)-1$, says that the two invariants are not completely independent, meaning that when $\zeta^-(X)=1$, we can deduce the value of $\iota(X)$. In this section, we want to observe explicitly that this is basically the unique case: in general, there is no way to deduce a bound for the value of $\zeta^-(X)$, knowing $\iota(X)$.

It seems that there is no notion of independence between invariants in literature. So let us propose a definition that looks quite natural and easy to verify.

\begin{definition}
Let $I_1,I_2$ be two real-valued invariants (of some category). We say that $I_2$ is \textbf{upper independent} on $I_1$ if there is no real valued function $f_2$ such that
\begin{equation}
I_2(X)\leq f_2(I_1(X))
\end{equation}
for all objects $X$ in the category.

Analogously one can define lower independence and completely independence.
\end{definition}

\begin{theorem}\label{th:invariantindependent}
$\zeta^-$ is upper independent on $\iota$.
\end{theorem}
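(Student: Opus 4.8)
The plan is to unwind the definition of upper independence and reduce the statement to the construction of a single witness. Suppose, for contradiction, that there were a real-valued function $f_2$ with $\zeta^-(X)\le f_2(\iota(X))$ for every locally finite metric space $X$. It then suffices to exhibit one locally finite metric space $X_0$ with $\iota(X_0)=0$ and $\zeta^-(X_0)=+\infty$: for such a space the inequality would read $+\infty\le f_2(0)$, which is absurd since $f_2(0)$ is a finite real number. Thus the whole theorem reduces to producing a space that is \emph{globally amenable} (so that $\iota$ vanishes) yet is seen as \emph{uniformly exponentially growing by every local observer} (so that $\zeta^-$ is infinite).

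For the witness I would take $X_0$ to be the Cayley graph of a finitely generated amenable group of exponential growth, equipped with its word metric; the lamplighter group $\mathbb{Z}_2\wr\mathbb{Z}$ (solvable, hence amenable, and of exponential growth) is the cleanest choice. Amenability gives $\iota(X_0)=0$ directly from the results already proved: by Theorem \ref{th:snvsamenability} the space $X_0$ has property SN, and by Theorem \ref{th:isoperimetricvssn} property SN is equivalent to $\iota(X_0)=0$. For the zoom constant I would exploit that $X_0$ is vertex-transitive, so $\zeta(x)$ is independent of $x$ and $\zeta^-(X_0)=\zeta(x_0)$ for any fixed $x_0$; this reduces the apparent need to control \emph{every} point to the control of a single point. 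Moreover, for a connected graph with the shortest-path metric the discrete neighbourhood $dN_m(\{x_0\})$ is exactly the closed ball of radius $m$ (this is the computation behind Lemma \ref{lm:nojump2}, valid for locally finite connected graphs), so $|dN_m(\{x_0\})|=\beta(m)$, the growth function of the group.

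The computation of $\zeta(x_0)$ is then immediate from regularity of the growth. Writing $\beta(m)=|dN_m(\{x_0\})|$, exponential growth provides a common base $\lambda>1$ with $\beta(m)=\Theta(m^r\lambda^m)$ for some $r\ge 0$; absorbing the polynomial factor, one checks that for each fixed scale $k$ the annulus ratio $\beta(nk)/\beta((n-1)k)$ is bounded below over all $n\ge 1$ by $c\,\lambda^k$ for a constant $c>0$ independent of $n$. Hence $\zeta_k(x_0)=\inf_{n\ge1}\beta(nk)/\beta((n-1)k)\ge c\,\lambda^k$, and therefore $\zeta(x_0)=\sup_{k\ge1}\zeta_k(x_0)=+\infty$. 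By vertex-transitivity $\zeta^-(X_0)=\zeta(x_0)=+\infty$, while $\iota(X_0)=0$, which contradicts the existence of $f_2$ and proves the theorem.

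The main obstacle is conceptual rather than computational: one must reconcile $\iota(X_0)=0$ with $\zeta^-(X_0)=+\infty$, two requirements that pull in opposite directions. The resolution is that the optimal F\o lner sets witnessing $\iota=0$ in the lamplighter are \emph{not} balls (they are cylinder sets on which the lamplighter roams inside a long interval), so they do not interfere with the exponential growth of balls. The delicate technical point to check is that the ratios $\beta(nk)/\beta((n-1)k)$ are bounded below uniformly in $n$, and not merely along a subsequence: a group of exponential growth whose growth is irregular (with long nearly-flat stretches) could a priori have a finite $\zeta$, so one really needs the \emph{purely} (polynomially-corrected) exponential growth with a common base, which the lamplighter enjoys because its growth series is rational. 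Vertex-transitivity is exactly what lets us avoid having to rule out isolated points of slow local growth; an explicit non-homogeneous construction (for instance a spine carrying finite binary trees of increasing depth) would also have $\iota=0$, but there one would have to verify by hand that no vertex sees slow growth, which is precisely the verification that transitivity makes free.
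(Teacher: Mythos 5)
Your proof is correct, but it uses a genuinely different witness from the paper's. The paper's proof is a one-family construction: take a regular tree of degree $n$ and attach a copy of $\mathbb Z$ at a vertex; finite sets concentrated in the $\mathbb Z$-ray force $\iota(X)=0$ for every $n$, while the tree part makes $\zeta^-(X)$ arbitrarily large as $n$ grows. You instead take a single homogeneous space, the Cayley graph of the lamplighter group $\mathbb Z_2\wr\mathbb Z$, obtaining $\iota(X_0)=0$ by chaining Theorems \ref{th:snvsamenability} and \ref{th:isoperimetricvssn} through amenability, and $\zeta^-(X_0)=+\infty$ from uniform exponential growth plus vertex-transitivity (with $dN_m(\{x_0\})$ identified with balls via the computation behind Lemma \ref{lm:nojump2}). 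What each buys: the paper's example is elementary and self-contained (no group theory, no growth-series regularity), but it is inhomogeneous, so the claim that \emph{every} observer sees fast growth --- i.e.\ that $\zeta(x)$ is large even for $x$ deep in the $\mathbb Z$-ray, which holds only because the supremum over the scale $k$ lets such an observer eventually see the tree --- is left as ``it is clear''; your transitivity argument makes exactly this verification free, at the cost of importing amenability of the lamplighter and the rationality of its growth series (needed, as you rightly flag, to rule out flat stretches in $\beta(nk)/\beta((n-1)k)$). Your closing remark about a ``spine carrying trees'' essentially anticipates the paper's actual construction. One small caution: the paper's Definition of upper independence quantifies over \emph{real-valued} invariants and functions $f_2$, so a single space with $\zeta^-=+\infty$ sits slightly outside that frame; a strict reading is better served by a family with finite but unbounded $\zeta^-$ (this is presumably why the paper varies the degree $n$, although by the same scale-supremum effect its own example arguably has $\zeta^-=+\infty$ as well). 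Your argument is easily repaired on this point, e.g.\ by truncating scales or by taking a sequence of groups or graphs with unbounded but finite $\zeta^-$, so I regard this as a cosmetic rather than substantive gap.
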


\begin{proof}
Consider a regular tree of degree $n$ and attach a copy of $\mathbb Z$ over a vertex. We obtain an infinite, locally finite, connected graph. Let $(X,d)$ be this graph equipped with the shortest path metric. In this case $\iota(X)=0$, independently on $n$, since it is enough to take finite sets concentrated in the copy of $\mathbb Z$. On the other hand it is clear that $\zeta^-(X)$ can be made arbitrarily big with $n$.
\end{proof}

\end{document}